\newcommand{\reff}[1]{(\ref{#1})}
\newtheorem{theo}{Theorem}[section]
\newtheorem{cor}[theo]{Corollary}
\newtheorem{prop}[theo]{Proposition}
\newtheorem{lem}[theo]{Lemma}
\newcommand{\cb}{{\mathcal B}}
\newcommand{\cd}{{\mathcal D}}
\newcommand{\cf}{{\mathcal F}}
\newcommand{\cg}{{\mathcal G}}
\newcommand{\ch}{{\mathcal H}}
\newcommand{\ci}{{\mathcal I}}
\newcommand{\cj}{{\mathcal J}}
\newcommand{\cl}{{\mathcal L}}
\newcommand{\cn}{{\mathcal N}}
\newcommand{\cm}{{\mathcal M}}
\newcommand{\crr}{{\mathcal R}}
\newcommand{\ct}{{\mathcal T}}
\newcommand{\cw}{{\mathcal W}}
\newcommand{\E}{{\mathbb E}}
\newcommand{\N}{{\mathbb N}}
\newcommand{\Nb}{\bar{\mathbb N}}
\renewcommand{\P}{{\mathbb P}}
\newcommand{\Pb}{{\bar{\mathbb P}}}
\newcommand{\bP}{{\pmb{\mathbb P}}}
\renewcommand{\bP}{\mathbf{P}}
\newcommand{\bE}{{\pmb{\mathbb E}}}
\renewcommand{\bE}{{\mathbf E}}
\newcommand{\bN}{{\pmb{\mathbb N}}}
\renewcommand{\bN}{{\mathbf N}}
\newcommand{\R}{{\mathbb R}}
\newcommand{\M}{\mathbb M}
\newcommand{\rP}{\mathrm{P}}
\newcommand{\rE}{\mathrm{E}}
\newcommand{\ind}{{\mathbf1}}
\newcommand{\Supp}{\operatorname{Supp}}
\newcommand{\lb}{[\![}
\newcommand{\rb}{]\!]}
\begin{document}
\begin{frontmatter}

\title{A continuum-tree-valued Markov process\thanksref{TT1}}
\runtitle{A continuum-tree-valued Markov process}

\begin{aug}
\author[A]{\fnms{Romain} \snm{Abraham}\corref{}\ead[label=e1]{romain.abraham@univ-orleans.fr}}
\and
\author[B]{\fnms{Jean-Fran\c{c}ois} \snm{Delmas}\ead[label=e2]{delmas@cermics.enpc.fr}}
\runauthor{R. Abraham and J.-F. Delmas}
\affiliation{Universit\'{e} d'Orl\'{e}ans and Universit\'{e} Paris-Est}
\address[A]{Laboratoire MAPMO\\
 CNRS, UMR 6628\\
F\'{e}d\'{e}ration Denis Poisson, FR 2964\\
Universit\'{e} d'Orl\'{e}ans\\
B.P. 6759\\
45067 Orl\'{e}ans cedex 2\\
France\\
\printead{e1}} 
\address[B]{Universit\'{e} Paris-Est\\
\'{E}cole des Ponts, CERMICS\\
6-8
av. Blaise Pascal,
Champs-sur-Marne\\ 77455 Marne La Vall\'{e}e\\ France\\
\printead{e2}}
\end{aug}
\thankstext{TT1}{Supported in part by the ``Agence Nationale de
  la Recherche,'' ANR-08-BLAN-0190.}

\received{\smonth{4} \syear{2009}}
\revised{\smonth{7} \syear{2010}}

%
\begin{abstract}
We present a construction of a L\'{e}vy continuum random tree (CRT)
associated with a super-critical continuous state branching process
using the so-called exploration process and a Girsanov theorem. We
also extend the pruning procedure to this super-critical case. Let
$\psi$ be a critical branching mechanism. We set
$\psi_\theta(\cdot)=\psi(\cdot+\theta)-\psi(\theta)$. Let
$\Theta=(\theta_\infty,+\infty)$ or $\Theta=[\theta_\infty
,+\infty)$
be the set of values of~$\theta$ for which $\psi_\theta$ is a
conservative branching mechanism. The pruning procedure allows to
construct a
decreasing L\'{e}vy-CRT-valued Markov process
$(\ct_\theta,\theta\in\Theta)$, such that $\ct_\theta$ has branching
mechanism $\psi_\theta$. It is sub-critical if $\theta>0$ and
super-critical if $\theta<0$. We then consider the explosion time $A$
of the CRT: the smallest (negative) time $\theta$ for which
the continuous state branching process (CB) associated with $\ct
_\theta$
has finite total mass (i.e., the length of the excursion of the
exploration process that codes the CRT is finite). We describe the law
of $A$ as well as
the distribution of the CRT just after this explosion time. The CRT
just after explosion can be seen as a CRT conditioned not to be
extinct which is pruned with an independent intensity related to $A$.
We also study the evolution of the CRT-valued process after the
explosion time. This extends results from Aldous and Pitman on
Galton--Watson trees. For the particular case of the quadratic
branching mechanism, we show that after explosion the total mass of
the CB behaves like the inverse of a stable subordinator with index
$1/2$. This result is related to the size of the tagged fragment for
the fragmentation of Aldous's CRT.
\end{abstract}

%
\begin{keyword}[class=AMS]
\kwd{60J25}
\kwd{60G55}
\kwd{60J80}.
\end{keyword}
\begin{keyword}
\kwd{Continuum random tree}
\kwd{explosion time}
\kwd{pruning}
\kwd{tree-valued
Markov process}
\kwd{continuous state branching process}
\kwd{exploration process}.
\end{keyword}

\end{frontmatter}

\section{Introduction}\label{intro}

Continuous state branching processes (CB in short) are nonnegative
real valued Markov processes first introduced by Jirina \cite{jsbpcss} that
satisfy a branching property: the process $(Z_t,t\ge0)$
is a CB if its law when starting from $x+x'$ is equal to the law
of the sum of two independent copies of $Z$ starting respectively from
$x$ and $x'$. The law of such a process is characterized by the
so-called branching mechanism $\psi$ via its Laplace functionals.
The branching mechanism $\psi$ of a CB is given by
\[
\psi(\lambda)=\tilde\alpha\lambda
+\beta\lambda^2+\int_{(0,+\infty)}\pi(d\ell) \bigl[\mathrm
{e}^{-\lambda \ell}-1+\lambda\ell\ind_{\{\ell\le1\}} \bigr],
\]
where $\tilde\alpha\in\R$, $\beta\ge0$ and $\pi$ is a Radon
measure on
$(0,+\infty)$ such that $\int_{(0,+\infty)}(1\wedge
\ell^2)\pi(d\ell)<+\infty$.
The CB is said to be respectively sub-critical, critical,
super-critical when $\psi'(0)>0$, $\psi'(0)=0$ or
$\psi'(0)<0$. We will write (sub)critical for critical or sub-critical.
Notice that $\psi$ is smooth and strictly convex if $\beta>0$ or
$\pi\neq0$.

 It is shown in \cite{llsbp} that all these CBs can be obtained
as the limit of renormalized sequences of Galton--Watson processes. A
genealogical tree is naturally associated with a Galton--Watson process
and the question of existence of such a genealogical structure for CB
arises naturally. This question has given birth to the theory of continuum
random trees (CRT), first introduced in the pioneer work of Aldous
\cite{acrt1,acrt2,acrt3}. A continuum random tree (called L\'{e}vy
CRT) that codes the genealogy of a general (sub)critical branching
process has been constructed in \cite{lgljbplplfss,lgljbplpep} and
studied further in \cite{dlgrtlpsbp}. The main tool of this approach
is the so-called exploration process $(\rho_s, s\in\R^+)$,
where~$\rho_s$ is a measure on $\R^+$, which codes for the
CRT. For (sub)critical quadratic branching mechanism ($\pi=0)$, the
measure $\rho_s$ is just the Lebesgue measure over an interval
$[0,H_s]$, and the so-called height process $(H_s, s\in\R^+)$ is a
Brownian motion with drift reflected at $0$. In \cite{dhpscsbp}, a CRT
is built for super-critical quadratic branching mechanism using the Girsanov
theorem for Brownian motion.

We propose here a construction for general super-critical L\'{e}vy tree,
using the exploration process, based on ideas from \cite{dhpscsbp}. We
first build the super-critical tree up to a given level $a$. This tree
can be coded by an exploration process, and its law is absolutely
continuous with respect to the law of a~(sub)critical L\'{e}vy tree, whose
leaves above level $a$ are removed. Moreover, this family of processes
(indexed by parameter $a$) satisfies a compatibility property, and
hence there exists a projective limit which can be seen as the law of
the CRT associated with the super-critical CB. This construction enables
us to use most of the results known for (sub)critical CRT. Notice that
another construction of a L\'{e}vy CRT that does not make use of the
exploration process has been proposed in \cite{dwglt} as the limit, for
the Gromov--Hausdorff metric, of a sequence of discrete trees. This
construction also holds in the super-critical case but is not easy to
use to derive properties for super-critical CRT.

 In a second time, we want to construct a ``decreasing''
tree-valued Markov process. To begin with, if $\psi$ is (sub)critical,
for $\theta>0$ we can construct, via the pruning procedure of\vadjust{\goodbreak}
\cite{advplcrt}, from a L\'{e}vy CRT $\ct$ associated with $\psi$,
a~sub-tree $\ct_\theta$ associated with the branching mechanism
$\psi_\theta$ defined by
\[
\forall\lambda\ge0 \qquad
\psi_\theta(\lambda)=\psi(\lambda+\theta)-\psi(\theta).
\]
By \cite{adfalp,vdmfglt}, we can even construct a ``decreasing''
family of L\'{e}vy CRTs $(\ct_\theta,\allowbreak\theta\ge0)$ such that $\ct
_\theta$
is associated with $\psi_\theta$ for every $\theta\ge0$.

In this paper, we consider a critical branching mechanism $\psi$ and
denote by $\Theta$ the set of real numbers $\theta$ (including negative
ones) for which $\psi_\theta$ is a well-defined conservative
branching mechanism (see
Section \ref{sec:propbm} for some examples). Notice that
$\Theta=[\theta_\infty,+\infty)$ or $(\theta_\infty,+\infty)$
for some
$\theta_\infty\in[-\infty,0]$. We then extend the pruning procedure of
\cite{advplcrt} to super-critical branching mechanisms in order to
define a L\'{e}vy CRT-valued process
$(\ct_\theta,\theta\in\Theta)$ such that:
\begin{itemize}
\item for every $\theta\in\Theta$, the L\'{e}vy CRT $\ct_\theta$ is
associated with the branching mechanism $\psi_\theta$;
\item all the trees $\ct_\theta$, $\theta\in\Theta$ have a common root;
\item the tree-valued process $(\ct_\theta,\theta\in\Theta)$ is
decreasing in the sense that for $\theta<\theta'$,
$\ct_{\theta'}$ is a sub-tree of $\ct_\theta$.
\end{itemize}

Let $\rho^\theta$ be the exploration process that codes for $\ct
_\theta$.
We denote by $\bN^\psi$ the excursion measure of the process
$(\rho^\theta,\theta\in\Theta)$, that is under $\bN^\psi$,
each~$\rho^\theta$ is the excursion of an exploration process associated
with $\psi_\theta$. Let $\sigma_\theta$ denote the length of this
excursion. The quantity $\sigma_\theta$ corresponds also to the total
mass of the CB associated with
the tree $\ct_\theta$. We say that the tree $\ct_\theta$ is finite
(under~$\bN^\psi$) if $\sigma_\theta$ is
finite (or equivalently if the total mass of the associated CB is
finite). By construction, we have that the trees $\ct_\theta$ for
$\theta\ge0$ are associated with (sub)critical branching mechanisms
and hence are a.e. finite. On the other hand, the trees $\ct_\theta$
for negative $\theta$ are associated with super-critical branching
mechanisms. We define the explosion time
\[
A=\inf\{\theta\in\Theta, \sigma_\theta<+\infty\}.
\]
For $\theta\in\Theta$, we define $\bar\theta$ as the unique
nonnegative real number such that
%
%
\begin{equation}\label{eq:def-bartheta}
\psi(\bar\theta)=\psi(\theta)
\end{equation}
(notice that $\bar\theta=\theta$ if $\theta\geq0$). If
$\theta_\infty\notin\Theta$, we set $\bar\theta_\infty=\lim
_{\theta\downarrow\theta_\infty}\bar\theta$. We give the
distribution of $A$ under $\bN^\psi$ (Theorem \ref{theo:law_A}). In
particular we have, for all $\theta\in[\theta_\infty,+\infty)$,
\[
\bN^\psi[A>\theta]=\bar\theta-\theta.
\]
We also give the
distribution of the trees after the explosion time $(\ct_\theta,
\theta\geq A)$ (Theorem \ref{theo:loi-sigmaA} and Corollary
\ref{cor:gA}). Of particular interest is the distribution of the
tree at its explosion time, $\ct_A$.

The pruning procedure can been viewed, from a discrete point of view,
as a percolation on a Galton--Watson tree. This idea has been used in
\cite{aptmcdgwp} (percolation on branches) and in \cite{adhpgwttvmp}
(percolation on nodes) to construct tree-valued Markov processes from
a Galton--Watson tree. The CRT-valued Markov process constructed here
can be viewed as the continuous analog of the discrete models of
\cite{aptmcdgwp} and \cite{adhpgwttvmp} (or maybe a mixture of both
constructions). However, no link is actually pointed out between the
discrete and the continuous frameworks.

In \cite{aptmcdgwp} and \cite{adhpgwttvmp}, another representation
of the process up to the explosion time is also given in terms of the
pruning of an infinite tree [a (sub)critical Galton--Watson tree
conditioned on nonextinction].
In the same spirit, we also construct
another tree-valued Markov process $(\ct_\theta^*,\theta\ge0)$
associated with a critical branching mechanism $\psi$. In the case of
a.s. extinction (i.e., when $\int^{+\infty}
\frac{dv}{\psi(v)}<+\infty$), $\ct^*_0$ is distributed as $\ct_0$
conditioned to survival. The tree $\ct^*_0$ is constructed via a spinal
decomposition along an infinite spine. Then we define the
continuum-tree-valued Markov process $(\ct_\theta^*,\theta\ge0)$ again
by a pruning procedure. Let $\theta\in(\theta_\infty,0)$. We prove
that under the excursion measure $\bN^\psi$, given $A=\theta$, the
process $(\ct_{\theta+u}, u\ge0)$ is distributed as the process
$(\ct^*_{\bar\theta+u},u\ge0)$ (Theorem \ref{theo:=loi}).

When the branching mechanism is quadratic, $\psi(\lambda)=\lambda^2/2$,
some explicit computations can be carried out. Let $\sigma^*_\theta$ be
the total mass of $\ct^*_\theta$ and $\tau=(\tau_\theta, \theta
\geq0)$
be the first passage process of a standard Brownian motion, that is a
stable subordinator with index $1/2$. We get (Proposition \ref{prop:st})
that $(\sigma^*_\theta, \theta\geq0)$ is distributed as
$(1/\tau_\theta, \theta\geq0)$ and that $(\sigma_{A+\theta},
\theta\geq
0)$ is distributed as $(1/(V+\tau_\theta), \theta\geq0)$ for some
random variable $V $ independent of $\tau$. Let us recall that the
pruning procedure of the tree can be used to construct some
fragmentation processes (see \cite{aspsf,adfalp,vdmfglt}) and the
process $(\sigma_\theta,\theta\ge0)$, conditionally on $\sigma_0=1$,
represents then the evolution of a tagged fragment. We hence recover a
well-known result of Aldous--Pitman \cite{apsac}: conditionally on
$\sigma_0=1$, $(\sigma_\theta,\theta\ge0)$ is distributed as
$(1/(1+\tau_\theta), \theta\geq0)$ (see Corollary \ref{cor:s1}).

 The paper is organized as follows. In Section
\ref{sec:girsanov_cb}, we introduce an exponential martingale of a CB
and give a Girsanov formula for CBs. We recall in Section \ref{sec:crt}
the construction of a (sub)critical L\'{e}vy CRT via the exploration
process and some useful properties of this exploration process. Then we
construct, in Section \ref{sec:superCRT}, the super-critical L\'{e}vy CRT
via a Girsanov theorem involving the same martingale as in
Section~\ref{sec:girsanov_cb}. We recall in Section~\ref{sec:pruning} the
pruning procedure for critical or sub-critical CRTs and extend this
procedure to super-critical CRTs. We construct in Section
\ref{sec:tree-valued} the tree-valued process
$(\ct_\theta,\theta\in\Theta)$, or more precisely the family of
exploration processes $(\rho^\theta, \theta\in\Theta)$ which
codes for
it. We also give the law of the explosion time~$A$ and the law of the
tree at this time. In Section~\ref{sec:infinite_tree}, we construct an
infinite tree and the corresponding pruned sub-trees
$(\ct_\theta^*,\theta\ge0)$, which are given by a spinal representation
using exploration processes. We prove in Section~\ref{sec:proof_theo}
that the process $(\ct_{A+u},u\ge0)$ is distributed as the process
$(\ct^*_{U+u},u\ge0)$ where $U$ is a positive random time independent
of $(\ct_\theta^*,\theta\ge0)$. We finally make the explicit
computations for the quadratic case in Section~\ref{sec:quadra}.

Notice that all the results in the following sections are stated using
exploration processes which code for the CRT, instead of the CRT
directly. An informal description of the links between the CRT and the
exploration process is given at the end of Section \ref{sec:lien-rho-crt}.


\section{Girsanov's formula for continuous branching process}
\label{sec:girsanov_cb}
\subsection{Continuous branching process}

Let $\psi$ be a branching mechanism of a CB: for $\lambda\geq0$,
%
%
\begin{equation}
\label{eq:def_psi}
\psi(\lambda)=\tilde\alpha\lambda+\beta\lambda^2+ \int
_{(0,{+\infty})}\pi(d\ell)
 \bigl[\mathrm{e}^{-\lambda\ell}-1+\lambda\ell\ind_{\{\ell\leq
1\}
} \bigr],
\end{equation}
where $\tilde\alpha\in\R$, $\beta\geq0$, and $\pi$ is a Radon
measure on
$(0,{+\infty} )$ such that $\int_{(0,{+\infty} )} (1 \wedge\ell
^2)
\pi(d\ell)<{+\infty} $.
We shall say that $\psi$ has parameter $(\tilde\alpha, \beta, \pi)$.

We shall assume that $\beta\neq0$ or $\pi\neq0$. We have $\psi
(0)=0$ and
$\psi'(0^+)=\tilde\alpha- \int_{(1, {+\infty} )} \ell\pi(d\ell)
\in[-\infty
, +\infty)$. In particular, we have $\psi'(0^+)=-\infty$ if and only
if $\int_{(1,{+\infty} )} \ell  \pi(d\ell)={+\infty} $. We say
that $\psi$
is conservative if for all $\varepsilon>0$
%
%
\begin{equation}
\label{eq:conservatif}
\int_0^\varepsilon\frac{1}{| \psi(u) |} \, du ={+\infty} .
\end{equation}
Notice that \reff{eq:conservatif} is fulfilled if $\psi'(0+)>-\infty
, $ that
is, if $\int_{(1,{+\infty} )} \ell  \pi(d\ell)<{+\infty} $.
If $\psi$ is conservative, the CB associated with $\psi$ does not
explode in finite time a.s.

Let $\rP_x^\psi$ be the law of a CB $Z=(Z_a,a\ge0)$ started at
$x\geq
0$ and with branching mechanism $\psi$, and let $\rE_x^\psi$ be the
corresponding expectation. The process $Z$ is a Feller process and thus
has a c\`{a}d-l\`{a}g version. Let \mbox{$\cf=(\cf_a, a\geq0)$} be the
filtration generated by
$Z$ completed the usual way. For every $\lambda>0$, for every $a\ge0$,
we have
%
%
\begin{equation}
\label{eq:laplace_csbp}
\rE^\psi_x [\mathrm{e}^{-\lambda Z_a} ]=\mathrm
{e}^{-xu(a,\lambda)},
\end{equation}
where function $u$ is the unique nonnegative solution of
%
%
\begin{equation}
\label{eq:int_u}
u(a,\lambda)+\int_0^a\psi (u(s,\lambda) )\,ds=\lambda,
\qquad
\lambda\geq0,   a\geq0.
\end{equation}
This equation is equivalent to
%
%
\begin{equation}
\label{eq:int_u2}
\int_{u(a,\lambda)}^\lambda\frac{dr}{\psi(r)}=a, \qquad
\lambda\geq0,   a\geq0.
\end{equation}
If \reff{eq:conservatif} holds, then the process is conservative:
a.s. for all $a\geq0$, $Z_a<+\infty$.

Let $q_0$ be the largest root of $\psi(q)=0$. Since $\psi(0)=0$, we
have $q_0\ge0$. If $\psi$ is (sub)critical, since $\psi$ is strictly
convex, we get that $q_0=0$. If $\psi$ is super-critical, if we denote
by $q^*>0$ the only real number such that $\psi'(q^*)=0$, we have $q_0>q^*>0$.
See Lemma
\ref{lem:propZ} for the interpretation of ${q_0}$.

If $f$ is a function defined on $[\gamma,{+\infty} )$, then for
$\theta\geq
\gamma$, we set for $\lambda\geq\gamma-\theta$
\[
f_\theta(\lambda)=f(\theta+\lambda)-f(\theta).
\]
If $\nu$ is a measure on $(0,{+\infty} )$, then for $q\in\R$,
we set
%
%
\begin{equation}
\label{eq:def-nu-q}
\nu^{(q)}(d\ell)=\mathrm{e}^{-q \ell} \nu(d\ell).
\end{equation}

\begin{rem}
\label{rem:psiq-cons}
If $\pi^{(q)}((1,+\infty))<+\infty$ for some $q<0$, then $\psi$
given by
\reff{eq:def_psi} is well defined on $[q,{+\infty})$ and,
for $\theta\in[q, {+\infty})$, $\psi_\theta$ is a branching
mechanism with parameter $(\tilde\alpha+2\beta\theta+ \int_{(0,1]}
\pi(d\ell)   \ell(1-\mathrm{e}^{-\theta\ell}), \beta, \pi
^{(\theta
)})$. Notice
that for all $\theta>q$, $\psi_\theta$ is conservative. And, if the
additional assumption
\[
\int_{(1,+\infty)} \ell
\pi^{(q)}(d\ell)=\int_{(1,+\infty)}\ell\mathrm{e}^{|q|\ell}\pi
(d\ell
)<+\infty
\]
holds, then $|(\psi_{q})'(0+)|<+\infty$ and $\psi_{q}$ is conservative.
\end{rem}

\subsection{Girsanov's formula}
Let $Z=(Z_a, a\geq0)$ be a conservative CB with branching mechanism
$\psi$ given by \reff{eq:def_psi} with $\beta\neq0$ or $\pi\neq
0$, and
let $(\cf_a, a\geq0)$ be its natural filtration. Let $q\in\R$ such that
$q\geq0$ or $q<0$ and\break
$\int_{(1,+\infty)}\ell\mathrm{e}^{|q|\ell}\pi(d\ell)<+\infty$.
Then, thanks
to Remark \ref{rem:psiq-cons}, $\psi(q)$ and $\psi_q$ are well defined
and $\psi_q$ is conservative. Then we consider the process
$M^{\psi,q}=(M_a^{\psi,q}, a\geq0)$ defined by
%
%
\begin{equation}
\label{eq:defMq}
M_a^{\psi,q}=\mathrm{e}^{qx- q Z_a - \psi(q)\int_0^a Z_s \, ds }.
\end{equation}

\begin{theo}
\label{theo:mart}
Let $q\in\R$ such that $q\geq0$ or $q<0$ and
$\int_{(1,+\infty)}\ell\mathrm{e}^{|q|\ell}\pi(d\ell)<+\infty$.
\begin{longlist}[(ii)]
\item[(i)] The process $M^{\psi,q}$ is a $\cf$-martingale under
$P_x^\psi$.
\item[(ii)] Let $a,x\ge0$. On $\cf_a$, the probability measure
$\rP_x^{\psi_q}$ is absolutely continuous with respect to
$\rP_x^\psi$ and
\[
\frac{{d\rP_x^{\psi_q}}_{|_{\cf_a}}}{{d\rP_x^\psi}_{|_{\cf
_a}}}=M^{\psi,q}_a.
\]
\end{longlist}
\end{theo}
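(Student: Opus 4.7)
The plan is to deduce both parts from a Feynman--Kac--type extension of \reff{eq:laplace_csbp} for the joint Laplace functional of $(Z_t,\int_0^t Z_s\,ds)$ under $\rP_x^\psi$. Specifically, I would first establish that for admissible $q$ (i.e.\ $q\ge 0$, or $q<0$ with $\int_{(1,+\infty)}\ell\expp{|q|\ell}\pi(d\ell)<+\infty$) and suitable $\lambda$,
\begin{equation}
\label{eq:planFK}
\rE_y^\psi\!\left[\expp{-\lambda Z_t-\psi(q)\int_0^t Z_s\,ds}\right]=\expp{-y\, v(t,\lambda)},
\end{equation}
where $v(\cdot,\lambda)$ is the unique solution of $\partial_t v=\psi(q)-\psi(v)$ with $v(0,\lambda)=\lambda$. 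For $q\ge 0$ this is the standard Feynman--Kac identity for a non-negative additive functional of $Z$: writing $F(t,y)$ for the left-hand side, one decomposes $[0,t+h]$ via the Markov property at time $h$, uses $\int_0^h Z_s\,ds=hy+o(h)$ and the ODE $\partial_t u=-\psi(u)$ from \reff{eq:int_u} to match $O(h)$ terms, and concludes by uniqueness of the ODE. For $q<0$, the integrability hypothesis combined with Remark \ref{rem:psiq-cons} guarantees that $\expp{|q|Z_t}$ has finite expectation under $\rP_y^\psi$ and that $\psi$ is well defined on $[q,+\infty)$; a monotone approximation by finite-variance branching mechanisms (truncating $\pi$ above level $n$) then carries \reff{eq:planFK} from the (sub)critical setting to the general case.

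The martingale property (i) is then immediate: taking $\lambda=q$ in \reff{eq:planFK}, the ODE admits the constant solution $v\equiv q$, so
\[
\rE_y^\psi\!\left[\expp{-qZ_t-\psi(q)\int_0^t Z_s\,ds}\right]=\expp{-qy}.
\]
Combined with the multiplicative identity $M_a^{\psi,q}/M_s^{\psi,q}=\expp{-q(Z_a-Z_s)-\psi(q)\int_s^a Z_r\,dr}$ and the Markov property of $Z$ at time $s$, this gives $\rE_x^\psi[M_a^{\psi,q}\mid\cf_s]=M_s^{\psi,q}$.

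For (ii), I would identify the finite-dimensional distributions of $Z$ under the tilted measure $M_a^{\psi,q}\cdot\rP_x^\psi$ with those under $\rP_x^{\psi_q}$. The one-dimensional case reduces to
\[
\rE_x^\psi\!\left[M_a^{\psi,q}\expp{-\lambda Z_a}\right]=\expp{qx-x\, v(a,q+\lambda)}
\]
via \reff{eq:planFK}. The substitution $w=v-q$ turns $\partial_t v=\psi(q)-\psi(v)$ into $\partial_t w=-\psi_q(w)$ with $w(0)=\lambda$, so by \reff{eq:int_u} applied to $\psi_q$ we get $v(a,q+\lambda)=q+u^{\psi_q}(a,\lambda)$, and the tilted Laplace transform equals $\expp{-x\,u^{\psi_q}(a,\lambda)}=\rE_x^{\psi_q}[\expp{-\lambda Z_a}]$. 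Iterating along a partition $0=s_0<s_1<\cdots<s_n=a$, using the Markov property under both laws and the multiplicativity of $M^{\psi,q}$, one sees that the conditional Laplace transform of $Z_{s_{k+1}}$ given $\cf_{s_k}$ under the tilted measure coincides with the CSBP$(\psi_q)$ transition kernel, yielding equality of all finite-dimensional distributions and hence of laws on $\cf_a$.

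The main obstacle is the rigorous justification in the regime $q<0$. The formal algebra is identical to the (sub)critical case, but one must verify that $\expp{-qZ_t}$ and $\expp{-\psi(q)\int_0^t Z_s\,ds}$ really are integrable so that \reff{eq:planFK}, the martingale identities, and the tilting produce bona fide probability measures rather than formal manipulations of generating functions. This is precisely where the assumption $\int_{(1,+\infty)}\ell\expp{|q|\ell}\pi(d\ell)<+\infty$ is needed: via Remark \ref{rem:psiq-cons} it both produces a well-defined conservative $\psi_q$ and, through \reff{eq:int_u2} extended to the range $[q,+\infty)$, controls the exponential moments of $Z_t$ uniformly in $t$ on compact intervals.
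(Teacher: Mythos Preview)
Your approach is sound and arrives at the same conclusion, but by a somewhat different route than the paper. The paper does not derive the Feynman--Kac ODE directly; instead it invokes Proposition~\ref{prop:uniq_x}, an integral-equation characterization of $\rE_x^\psi[\expp{-\int Z_r\,\mu(dr)}]$ for finite non-negative measures $\mu$, and checks that the constant function $q\ind_{[0,a]}$ solves the relevant equation when $\mu=q\delta_a+\psi(q)\ind_{[0,a]}(r)\,dr$. This is essentially your \reff{eq:planFK} in integral form, so the core algebra (the substitution $w=v-q$ turning the $\psi$-equation into the $\psi_q$-equation) is the same, and your identification of finite-dimensional laws in part~(ii) matches the paper's monotone-class argument.

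Where the arguments diverge is in the handling of the awkward parameter ranges. Your claim that \reff{eq:planFK} is ``the standard Feynman--Kac identity for a non-negative additive functional'' for all $q\ge 0$ glosses over the super-critical case $0<q<q_0$, where $\psi(q)<0$: the integrand $\expp{-qZ_t-\psi(q)\int_0^t Z_s\,ds}$ is then unbounded, Proposition~\ref{prop:uniq_x} does not apply to the signed measure $\nu_q$, and your heuristic $o(h)$ derivation does not by itself establish integrability. The paper treats this as a separate case, bootstrapping from the already-proven identities with $q=q_0$ and $q=q^*$ via the algebraic relation $(\psi_{q^*})_{q_0-q^*}=\psi_{q_0}$. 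Likewise, for $q<0$ the paper does not approximate: it applies the already-established Girsanov formula with parameter $-q>0$ to the branching mechanism $\psi_q$ (well defined and conservative by hypothesis and Remark~\ref{rem:psiq-cons}) and simply inverts the density. This inversion is shorter and sidesteps the uniform-integrability and convergence-of-laws issues that your truncation scheme would need to address.
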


Before going into the proof of this theorem, we recall Proposition 2.1
from~\cite{adcbpimcsbpi}.
For $\mu$ a positive measure on $\R$, we set
%
%
\begin{equation}
\label{def:H}
H(\mu)=\sup\bigl\{r\in\R; \mu\bigl([r,{+\infty} )\bigr)>0\bigr\},
\end{equation}
the maximal element of its support.
For $a<0$, we set $Z_a=0$.
\begin{prop}
\label{prop:uniq_x}
Let $\mu$ be a finite positive measure on $\R$ with support
bounded from above [i.e., $H(\mu)$ is finite]. Then we have for all
$s\in\R$, $x\geq0$,
%
%
\begin{equation}
\label{eq:variante-mu}
\rE_x^\psi \bigl[\mathrm{e}^{-\int_\R Z_{r-s}  \mu(dr) }
\bigr]=\mathrm{e}^{-xw(s)},
\end{equation}
where the function $w$ is a measurable locally bounded nonnegative
solution of the equation
%
%
\begin{eqnarray}
\label{eq:def_w}
w(s)+\int_s^{{+\infty} }\psi(w(r))\,dr&=&\int_{[s,{+\infty} )} \mu
(dr), \qquad s\leq
H(\mu)\quad\mbox{and}\nonumber
\\[-8pt]
\\[-8pt]
 w(s)&=&0, \qquad s> H(\mu).
\nonumber
\end{eqnarray}
If $\psi'(0^+)>-\infty$ or if $\mu(\{H(\mu)\})>0$, then \reff{eq:def_w}
has a unique measurable locally bounded nonnegative
solution.
\end{prop}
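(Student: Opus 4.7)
The plan is to establish existence by approximating $\mu$ with discrete measures and then to prove uniqueness by a backward Gronwall argument.

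For the discrete case, I would take $\mu = \sum_{i=1}^n \lambda_i \delta_{t_i}$ with $s<t_1<\cdots<t_n = H(\mu)$ and $\lambda_i>0$. Applying the Markov property of $Z$ iteratively at times $t_n-s,t_{n-1}-s,\dots,t_1-s$ together with \reff{eq:laplace_csbp}, one obtains
\[
\rE_x^\psi\left[\exp\left(-\sum_{i=1}^n \lambda_i Z_{t_i-s}\right)\right]=\exp(-xw(s)),
\]
where $w(s)=u(t_1-s,\lambda_1+u(t_2-t_1,\lambda_2+u(\cdots)))$. Thanks to \reff{eq:int_u}, $w$ is smooth on each open interval $(t_{i-1},t_i)$ with $w'(r)=\psi(w(r))$ there, has jumps $\lambda_i$ at the atoms $t_i$, and vanishes on $(H(\mu),\infty)$; integrating backwards from $H(\mu)$ yields exactly \reff{eq:def_w}.

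For general finite $\mu$ with $H(\mu)$ finite, I would approximate $\mu$ by placing the mass of each dyadic interval at its right endpoint, producing $\mu_n$ with $\mu_n(\R)=\mu(\R)$, $\mu_n\to\mu$ weakly, and supports bounded by $H(\mu)$. Since $Z$ is c\`ad-l\`ag and a.s.\ bounded on compacts, one gets $\int Z_{r-s}\,\mu_n(dr)\to\int Z_{r-s}\,\mu(dr)$ a.s., so the LHS of \reff{eq:variante-mu} converges by bounded convergence. The branching property of $Z$ forces the LHS to factor as $\exp(-xW(s))$ for some $W(s)\ge 0$ independent of $x$, so $w_n(s)\to W(s)=:w(s)$; local boundedness of $w$ is inherited from that of the paths of $Z$ on the compact support of $\mu$. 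Passing to the limit in the integral equations satisfied by each $w_n$ (using pointwise convergence together with a dominated convergence argument for $\int_s^{H(\mu)}\psi(w_n(r))\,dr$) gives \reff{eq:def_w} for $w$.

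For uniqueness, if $w_1,w_2$ are two non-negative locally bounded solutions, subtracting the two equations yields $w_1(s)-w_2(s)=-\int_s^{H(\mu)}\bigl[\psi(w_1(r))-\psi(w_2(r))\bigr]\,dr$. When $\psi'(0^+)>-\infty$, $\psi$ is Lipschitz on every bounded interval, so a common upper bound $M$ on $w_1,w_2$ gives $|w_1-w_2|(s)\le L(M)\int_s^{H(\mu)}|w_1-w_2|(r)\,dr$ and backward Gronwall closes the argument. When instead $\mu(\{H(\mu)\})>0$, evaluation of \reff{eq:def_w} at $s=H(\mu)$ forces $w_i(H(\mu))=\mu(\{H(\mu)\})>0$, so $w_i$ stays bounded below by some $c>0$ on a left-neighbourhood of $H(\mu)$ where $\psi$ is Lipschitz; one applies Gronwall there and then extends the equality backwards by iterating on successive intervals. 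The main obstacle is this last case: when $\psi'(0^+)=-\infty$, $\psi$ is not Lipschitz at the origin, so the backward integral equation admits a family of non-negative solutions bifurcating from $0$. The hypothesis $\mu(\{H(\mu)\})>0$ is tailored to prevent the trajectory from touching this singular point, and the delicate part of the iteration is to verify that the common solution never returns to $0$ as $s$ decreases, so that the local Lipschitz argument can be chained globally.
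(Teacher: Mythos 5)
First, note that the paper does not actually prove this proposition: it is recalled verbatim from Proposition~2.1 of \cite{ad:cbpimcsbpi} ("we recall Proposition 2.1 from..."), so there is no in-paper argument to compare yours against; your proposal has to stand on its own. Its overall architecture is reasonable and is the standard one: the discrete case by iterating the Markov property and \reff{eq:laplace_csbp}, the general case by approximating $\mu$ with atomic measures supported on right endpoints of dyadic intervals (right-continuity of $Z$ and conservativity, which is in force in this subsection, make the passage to the limit work), and uniqueness by a backward Gronwall estimate. The case $\psi'(0^+)>-\infty$ is indeed immediate since convexity then makes $\psi$ Lipschitz on $[0,M]$ for every $M$.

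The genuine gap is in the second uniqueness case, and you have in effect flagged it yourself without closing it: you write that "the delicate part of the iteration is to verify that the common solution never returns to $0$ as $s$ decreases", but you do not verify it. As stated, your chaining argument could stall: each successive interval on which you can apply the local Lipschitz bound depends on a positive lower bound for $w$, and nothing in your write-up prevents these lower bounds (and hence the interval lengths) from degenerating before you have covered all of $(-\infty,H(\mu)]$. The missing ingredient is a \emph{global} a priori lower bound valid for every non-negative locally bounded solution. It is available cheaply: since $\psi$ is convex with $\psi(0)=0$, the map $v\mapsto \psi(v)/v$ is non-decreasing, so $\psi(v)\le C_M\, v$ for $0\le v\le M$ with $C_M=\psi(M)/M\vee 0$; taking $M$ a common bound for the solutions on $[a,H(\mu)]$ and dropping the (non-negative) contribution of $\mu$ in \reff{eq:def_w}, any solution satisfies $w(s)\ge w(H(\mu))-C_M\int_s^{H(\mu)}w(r)\,dr$, whence by backward Gronwall $w(s)\ge \mu(\{H(\mu)\})\,\expp{-C_M(H(\mu)-s)}>0$ on $[a,H(\mu)]$. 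With this uniform positive floor, $\psi$ is Lipschitz on the range of both solutions over any compact, and a single Gronwall argument finishes uniqueness; no delicate iteration is needed. Until that lower bound is supplied, the proof of the case $\mu(\{H(\mu)\})>0$ with $\psi'(0^+)=-\infty$ is incomplete.
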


\begin{pf*}{Proof of Theorem \ref{theo:mart}}

\textit{First case}. We consider $q>0$ such that $\psi(q)\geq0$.

We have $0\leq M^{\psi,q}_a\leq\mathrm{e}^{qx}$, thus $M^{\psi,q}$ is
bounded. It is clear that $M^{\psi,q}$ is $\cf$-adapted.

To check that $M^{\psi,q}$ is a martingale, thanks
to the Markov property, it is enough to check that $\rE_x^\psi
[M^{\psi,q}_a]=\rE_x^\psi[M^{\psi,q}_0]=1$
for all $a\geq0$ and all $x\ge0$. Consider the measure $\nu_q(dr)=q
\delta_a(dr) +
\psi(q)\ind_{[0,a]}(r)\,dr$, where $\delta_a$ is the Dirac mass at point
$a$. Notice that $H(\nu_q)=a$ and that $\nu_q (\{H(\nu_q)\}
 )=q>0$.
Hence, thanks to Proposition \ref{prop:uniq_x}, there exists a
unique nonnegative solution~$w$ of \reff{eq:def_w} with
$\mu=\nu_q$, and
$\rE_x^\psi[M^{\psi,q}_a]=\mathrm{e}^{-x(w(0)-q)}$. As
$q\ind_{[0,a]}$ also solves~\reff{eq:def_w} with $\mu=\nu_q$, we deduce
that $w=q\ind_{[0,a]}$ and that $\rE_x[M^{\psi,q}_a]=1$. Thus, we
get that $M^{\psi,q}$
is a bounded martingale.

Let $\nu$ be a nonnegative measure on $\R$ with support in
$[0,a]$ [i.e., $H(\nu)\le a$].
Thanks to Proposition \ref{prop:uniq_x}, we have that $
\rE_x^\psi[M^{\psi,q}_a\mathrm{e}^{-\int_\R Z_r \nu(dr)}
]=\mathrm{e}^{-x(v(0)-q)}$, where
$v$ is the unique nonnegative solution of \reff{eq:def_w} with
$\mu=\nu+\nu_q$. As $M^{\psi,q}_a\mathrm{e}^{-\int_\R Z_r \nu(dr)}
\leq M^{\psi,q}_a$, we
deduce that $
\mathrm{e}^{-x(v(0)-q)}=\rE_x^\psi[M^{\psi,q}_a\mathrm{e}^{-\int
_\R Z_r \nu (dr)} ]\leq1$,
that is, $v(0)\geq q$. We set $u=v-q\ind_{[0,a]}$, and we deduce that
$u$ is
nonnegative and solves
%
%
\begin{eqnarray}\label{eq:def_u}
u(s)+\int_s^{+\infty} \psi_q(u(r))\, dr&=&\int_{[s,{+\infty} )} \nu(dr),
\qquad s\leq H(\nu) \quad\mbox{and}\nonumber
\\[-8pt]
\\[-8pt]
 u(s)&=&0, \qquad s> H(\nu).
\nonumber
\end{eqnarray}
As $\psi(q)\geq0$, we deduce from the convexity of $\psi$ that $\psi
_q'(0)=\psi'(q)\geq0$. Thanks to
Proposition \ref{prop:uniq_x}, we deduce that $u$ is the
unique nonnegative solution of \reff{eq:def_u} and that
$
\mathrm{e}^{-xu(0)}=\rE_x^{\psi_q}[\mathrm{e}^{- \int_\R Z_r \nu
(dr)}]$. In
particular,
we have that for all nonnegative measure $\nu$ on $\R$ with support in
$[0,a]$,
\[
\rE^ \psi_x\bigl [M^{\psi,q}_a\mathrm{e}^{-\int_\R Z_r \nu(dr)}
\bigr]=\rE_x^{\psi_q} \bigl[\mathrm{e}^{- \int_\R Z_r \nu(dr)} \bigr].
\]

As $\mathrm{e}^{-\int_\R Z_r \nu(dr)}$ is $\cf_a$-measurable, we
deduce from
the monotone class theorem that for any nonnegative $\cf
_a$-measurable random variable $W$,
%
%
\begin{equation}
\label{eq:EMq}
\rE^\psi_x \bigl[W \mathrm{e}^{q x -q Z_a- \psi(q)\int_0^a Z_r\,
dr} \bigr]
=\rE^{\psi}_x[WM^{\psi,q}_a ]
=\rE_x^{\psi_q}[W].
\end{equation}
This proves the second part of the theorem.

\textit{Second case}. We consider $q\geq0$ such that $\psi(q)<
0$. Let us remark that this only occurs when $\psi$ is super-critical.

Recall that $q_0>q^*>0$ are such that $\psi(q_0)=0$ and $\psi'(q^*)=0$.
Notice that $\psi_{{q^*}}'(0)=\psi'({q^*})=0$, that is, $\psi
_{{q^*}}$ is
critical. Let $W$ be any nonnegative random variable
$\cf_a$-measurable. From the first step, using \reff{eq:EMq} with
$q={q_0}$, we get that
\[
\rE^\psi_x [W \mathrm{e}^{{q_0} x -{q_0} Z_a} ]=\rE
_x^{\psi_{{q_0}}}[W].
\]
Thanks to
\reff{eq:EMq} with
$\psi_{{q^*}}$ instead of $\psi$ and $({q_0}-{q^*})\geq0$ instead of
$q$, and
using that $ (\psi_{{q^*}} )_{{q_0}-{q^*}}=\psi_{{q_0}}$,
we deduce
that
\[
\rE^{\psi_{{q^*}}}_x\bigl [W \mathrm{e}^{({q_0}-{q^*})x -
({q_0}-{q^*})Z_a - \psi_{{q^*}}({q_0}-{q^*}) \int_0^a Z_r \, dr}
 \bigr] =\rE_x^{
(\psi_{{q^*}} )_{{q_0}-{q^*}}
}[W]=\rE_x^{\psi_{{q_0}}}[W].
\]
This implies that
\begin{eqnarray*}
\rE_x^{\psi}[W] &
=&\rE_x^{\psi_{{q_0}}} [W\mathrm{e}^{-{q_0}x}\mathrm
{e}^{{q_0}Z_a} ]\\
& =&\rE_x^{\psi_{{q^*}}} \bigl[ W\mathrm{e}^{-{q_0}x}\mathrm
{e}^{{q_0}Z_a}\mathrm{e}^{({q_0}-{q^*})x - ({q_0}-{q^*})Z_a - \psi
_{{q^*}}({q_0}-{q^*}) \int_0^a Z_r \, dr}  \bigr]\\
& =&\rE_x^{\psi_{{q^*}}} \bigl[W \mathrm{e}^{-{q^*}x +{q^*} Z_a -
\psi_{{q^*}}({q_0}-{q^*}) \int_0^a Z_r \, dr}  \bigr].
\end{eqnarray*}
As
$\psi_{{q^*}}({q_0}-{q^*})=\psi({q_0})-\psi({q^*})=-\psi
({q^*})=\psi_{{q^*}}(-{q^*})$,
we finally obtain
%
%
\begin{equation}
\label{eq:EMq2}
\rE_x^{\psi}[W ]=\rE_x^{\psi_{{q^*}}} \bigl[W
\mathrm{e}^{-{q^*}x +{q^*} Z_a -\psi_{{q^*}}(-{q^*}) \int_0^a Z_r \,
dr}  \bigr].
\end{equation}

If $q<{q^*}$, as $(\psi_q)_{({q^*}-q)}=\psi_{{q^*}}$ and
$\psi'_q({q^*}-q)=\psi'({q^*})=0$, we deduce
from~\reff{eq:EMq2} with $\psi$ replaced by $\psi_q$ and ${q^*}$ by ${q^*}-q$
that
%
%
\begin{equation}
\label{eq:EMq2bis}
\rE_x^{\psi_q}[W ]=\rE_x^{\psi_{{q^*}}} \bigl[W
\mathrm{e}^{-({q^*}-q)x +({q^*}-q) Z_a -\psi_{{q^*}}(q-{q^*}) \int
_0^a Z_r \, dr}  \bigr].
\end{equation}
If $q>{q^*}$, formula \reff{eq:EMq}
holds with $\psi$ replaced by $\psi_{{q^*}}$ and $q$ replaced by $q-{q^*}$,
which also yields equation \reff{eq:EMq2bis}.

Using \reff{eq:EMq2}, \reff{eq:EMq2bis} and that
$\psi_{{q^*}}(-{q^*})+\psi(q)=\psi_{{q^*}}(q-{q^*})$, we get that
%
\begin{eqnarray}
\label{eq:EMq3}
&&\rE^\psi_x \bigl[W \mathrm{e}^{q x -q Z_a- \psi(q)\int_0^a Z_r\,
dr} \bigr]\nonumber\\
&& \qquad = \rE_x^{\psi_{{q^*}}} \bigl[W
\mathrm{e}^{-({q^*}-q)x +({q^*}-q) Z_a -(\psi_{{q^*}}(-{q^*})+\psi
(q)) \int_0^a Z_r \, dr}  \bigr]\nonumber
\\[-8pt]
\\[-8pt]
&& \qquad = \rE_x^{\psi_{{q^*}}} \bigl[W
\mathrm{e}^{-({q^*}-q)x +({q^*}-q) Z_a -\psi_{{q^*}}(q-{q^*}) \int
_0^a Z_r \, dr}  \bigr]\nonumber\\
&& \qquad = \rE_x^{\psi_q}[W ].
\nonumber
\end{eqnarray}
Since this holds for any nonnegative $\cf_a$-measurable random variable
$W$, this proves
(i) and (ii) of the theorem.\vadjust{\goodbreak}

\textit{Third case}. We consider $q<0$ and assume that $\int
_{(1,{+\infty}
)} \ell\mathrm{e}^{|q|\ell}   \pi(d \ell)<{+\infty} $. In particular,
$\psi_q$ is a conservative branching mechanism, thanks to Remark~\ref{rem:psiq-cons}.

Let $W$ be
any nonnegative $\cf_a$-measurable random variable. Using \reff{eq:EMq}
if $\psi_q (-q)\geq0$ or \reff{eq:EMq3} if $\psi_q
(-q)< 0$, with $\psi$ replaced by $\psi_q$ and $q$ by $-q$,
we deduce that
\[
\rE^{\psi_q}_x\bigl[W \mathrm{e}^{-q x +q Z_a- \psi_q(-q)\int_0^a Z_r\, dr}\bigr]
=\rE_x^{\psi}[W].
\]
This implies that
\[
\rE^{\psi_q}_x[W ]
=\rE_x^{\psi}\bigl[W\mathrm{e}^{q x -q Z_a+ \psi_q(-q)\int_0^a Z_r\, dr}\bigr]
=\rE_x^{\psi}\bigl[W\mathrm{e}^{q x -q Z_a- \psi(q)\int_0^a Z_r\, dr}\bigr].
\]
Since this holds for any nonnegative $\cf_a$-measurable random variable
$W$, this proves (i) and (ii) of the theorem.
\end{pf*}

Finally, we recall some well-known facts on CB. Recall that $q_0$ is
the largest root of $\psi(q)=0$, $q_0=0$ if $\psi$ is (sub)critical and
that $q_0>0$ if $\psi$ is super-critical. We set
%
%
\begin{equation}
\label{eq:def-sigma}
\sigma=\int_0^{+\infty} Z_a\, da.
\end{equation}
For $\lambda\geq0$, we set
%
%
\begin{equation}
\label{eq:def-psi-1}
\psi^{-1}(\lambda)=\sup\{r\geq0;   \psi(r)=\lambda\},
\end{equation}
and we call $\sigma$ the total mass of the CB.

\begin{lem}
\label{lem:propZ}
Assume that $\psi$ is given by \reff{eq:def_psi} with $\beta\neq0$ or
$\pi\neq0$ and is conservative.
\begin{longlist}[(iii)]
\item[(i)] Then $\rP^
\psi_x$-a.s. $Z_\infty=\lim_{a\rightarrow+\infty}Z_a$ exists,
$Z_\infty\in\{0, +\infty\}$,
%
%
\begin{equation}
\label{eq:PZI=0}
\rP^\psi_x(Z_\infty=0
)=\mathrm{e}^{-xq_0},
\end{equation}
$\{Z_\infty=0\}=\{\sigma<+\infty\}$, and we have, for
$\lambda>0$,
%
%
\begin{equation}
\label{eq:IZinfini}
\rE_x^\psi [\mathrm{e}^{- \lambda\sigma}
 ]=\mathrm{e}^{-x \psi^{-1}(\lambda)}.
\end{equation}
\item[(ii)] Let $q>0$ such that $\psi(q)\geq0$. Then, the probability
measure $\rP_x^{\psi_q}$ is absolutely continuous with respect to
$\rP_x^\psi$ with
\[
\frac{d\rP_x^{\psi_q}}{d\rP_x^\psi}=M^{\psi,q}_\infty,
\]
where
%
%
\begin{equation}
\label{eq:Minfini}
M^{\psi,q}_\infty=\mathrm{e}^{qx - \psi(q) \sigma}\ind_{\{\sigma
<+\infty\}}.
\end{equation}
\item[(iii)] If $\psi$ is super-critical then, conditionally on
$\{Z_\infty=0\}$, $Z$ is distri\-buted as $\rP^{\psi_{{q_0}}}$: for any
nonnegative
random variable measurable w.r.t. $\sigma(Z_a,\allowbreak a\geq0)$, we have
\[
\rE^\psi_x [W|Z_\infty=0 ]=\rE^{\psi_{{q_0}}}_x[W].\vadjust{\goodbreak}
\]
\end{longlist}
\end{lem}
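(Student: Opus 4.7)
The plan is to extract (i) from the explicit Laplace transforms \reff{eq:laplace_csbp}--\reff{eq:int_u2}, to derive (ii) from Theorem \ref{theo:mart} by passing to $a\to+\infty$ via bounded martingale convergence, and to reduce (iii) to (ii) via the choice $q=q_0$.

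For (i), I would first observe that the ODE $\partial_a u(a,\lambda)=-\psi(u(a,\lambda))$, obtained by differentiating \reff{eq:int_u2}, forces $\lim_{a\to+\infty}u(a,\lambda)=q_0$ for every $\lambda>0$: by strict convexity of $\psi$ and $\psi(q_0)=0$, $\psi$ is positive on $(q_0,+\infty)$ (so $u$ decreases from $\lambda$ down to $q_0$ when $\lambda>q_0$) and $\psi\leq 0$ on $[0,q_0]$ (so $u$ increases from $\lambda$ up to $q_0$ when $0<\lambda\leq q_0$). Taking $\lambda=q_0$ in \reff{eq:laplace_csbp} together with the Markov property shows that $(\expp{-q_0 Z_a},a\geq 0)$ is a bounded $\cf$-martingale, which converges $\rP_x^\psi$-a.s., giving the almost-sure existence of $Z_\infty\in[0,+\infty]$. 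Dominated convergence in \reff{eq:laplace_csbp} then yields $\rE_x^\psi[\expp{-\lambda Z_\infty}]=\expp{-xq_0}$ for every $\lambda>0$; independence of the right-hand side from $\lambda$ forces $Z_\infty\in\{0,+\infty\}$ a.s.\ and establishes \reff{eq:PZI=0}. For \reff{eq:IZinfini}, I would apply Proposition \ref{prop:uniq_x} to the measures $\mu_t(dr)=\lambda\ind_{[0,t]}(r)\,dr$ (adding, if needed, a vanishing atom at $t$ to ensure uniqueness), obtaining $\rE_x^\psi[\expp{-\lambda\int_0^t Z_s\,ds}]=\expp{-xv_\lambda(t)}$ where $v_\lambda$ solves $v_\lambda'=\lambda-\psi(v_\lambda)$ with $v_\lambda(0)=0$; an analogous fixed-point analysis (noting that $\lambda-\psi(v)>0$ for $v\in[0,\psi^{-1}(\lambda))$ in both regimes) yields $v_\lambda(t)\uparrow\psi^{-1}(\lambda)$, and monotone convergence delivers \reff{eq:IZinfini}. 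The equality $\{Z_\infty=0\}=\{\sigma<+\infty\}$ then follows from the trivial inclusion $\{Z_\infty=+\infty\}\subset\{\sigma=+\infty\}$ combined with $\rP_x^\psi(\sigma<+\infty)=\lim_{\lambda\downarrow 0}\expp{-x\psi^{-1}(\lambda)}=\expp{-xq_0}=\rP_x^\psi(Z_\infty=0)$, using $\psi^{-1}(\lambda)\downarrow q_0$.

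For (ii), I would pass $a\to+\infty$ in Theorem \ref{theo:mart}: since $q>0$ and $\psi(q)\geq 0$, $M^{\psi,q}$ is bounded by $\expp{qx}$ and converges in $L^1$. On $\{\sigma<+\infty\}=\{Z_\infty=0\}$ the pointwise limit equals $\expp{qx-\psi(q)\sigma}$; on $\{\sigma=+\infty\}=\{Z_\infty=+\infty\}$, the factor $\expp{-\psi(q)\int_0^a Z_s\,ds}$ tends to $0$ when $\psi(q)>0$, while when $\psi(q)=0$ the factor $\expp{-qZ_a}$ tends to $0$ since $q>0$ and $Z_a\to+\infty$. This identifies $M^{\psi,q}_\infty$ as in \reff{eq:Minfini}, and the density identity of Theorem \ref{theo:mart}(ii), valid on each $\cf_a$, extends to $\cf_\infty=\sigma(\bigcup_a\cf_a)$ by $L^1$-convergence of the densities. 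Statement (iii) is then the specialization $q=q_0$, admissible in the super-critical case because $q_0>0$ and $\psi(q_0)=0$: \reff{eq:Minfini} reduces to $\expp{q_0 x}\ind_{\{Z_\infty=0\}}$, whence (ii) combined with \reff{eq:PZI=0} gives $\rE_x^{\psi_{q_0}}[W]=\expp{q_0 x}\rE_x^\psi[W\ind_{\{Z_\infty=0\}}]=\rE_x^\psi[W\mid Z_\infty=0]$.

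The main obstacle will be the ODE analysis producing $\lim_a u(a,\lambda)=q_0$ and $v_\lambda(t)\uparrow\psi^{-1}(\lambda)$: one must treat both regimes (with $\psi<0$ on $(0,q_0)$ in the super-critical case) and identify the correct attracting fixed point reached from the relevant initial condition. Everything else reduces to standard bounded martingale convergence combined with the dichotomy $Z_\infty\in\{0,+\infty\}$.
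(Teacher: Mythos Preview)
Your approach is essentially the paper's, with one oversight and one minor variation.

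The oversight: your argument for the almost-sure existence of $Z_\infty$ uses the martingale $(\expp{-q_0 Z_a})_{a\ge 0}$, but when $\psi$ is (sub)critical one has $q_0=0$ and this process is identically $1$, so its convergence says nothing about $Z_a$. The paper avoids this by working with the martingale $N_a=\expp{-\lambda Z_a + x u(a,\lambda)}$ for an arbitrary $\lambda>0$: since $u(a,\lambda)\to q_0$ (as you correctly argue from the ODE), convergence of $N_a$ yields convergence of $\expp{-\lambda Z_a}$ and hence of $Z_a$ in $[0,+\infty]$, uniformly over all cases. Replace your $\lambda=q_0$ step by this.

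The variation: for \reff{eq:IZinfini} you invoke Proposition~\ref{prop:uniq_x} with $\mu_t(dr)=\lambda\ind_{[0,t]}(r)\,dr$ and analyse the resulting ODE $v_\lambda'=\lambda-\psi(v_\lambda)$; the paper instead obtains \reff{eq:IZinfini} as a by-product of the bounded-martingale limit of $M^{\psi,q}$, reading off $\rE^\psi_x[\expp{-\psi(q)\sigma}\ind_{\{Z_\infty=0\}}]=\expp{-qx}$ directly. Both routes are valid; the paper's is slightly more economical since it reuses the same martingale needed for~(ii), while yours requires a second fixed-point argument. Parts~(ii) and~(iii) are handled identically in both.
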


\begin{pf}
For $\lambda>0$, we set $N_a= \mathrm{e}^{-\lambda Z_a + x
u(a,\lambda)}$,
where $u $ is the unique nonnegative solution of \reff{eq:int_u2}.
Thanks to \reff{eq:laplace_csbp} and the Markov property, $ (N_a,
a\geq0 )$ is a bounded martingale under $\rP^\psi_x$. Hence,
as $a$ goes to infinity, it converges a.s. and in $L^1$ to a limit, say
$N_\infty$. From \reff{eq:int_u2}, we get that $\lim_{a\rightarrow
+\infty} u(a,\lambda)=q_0$. This implies that $Z_\infty
=\lim_{a\rightarrow+\infty} Z_a$ exists a.s. in $[0, +\infty
]$. Since $\rE^\psi_x [N_\infty]=1$, we get $\rE^\psi_x [\mathrm
{e}^{-\lambda Z_\infty} ]=\mathrm{e}^{-q_0 x}$ for all $\lambda>0$.
This implies that
$\rP^\psi_x$-a.s. $Z_\infty\in\{0, +\infty\}$ and \reff{eq:PZI=0}.

Clearly, we have $\{Z_\infty=+\infty
\}\subset\{\sigma=+\infty\}$. For $q> 0$ such that
$\psi(q)\geq0$, we get that $(M^{\psi, q}_a, a\geq0)$ is a bounded
martingale under $\rP^\psi_x$. Hence, as $a$ goes to infinity, it
converges a.s. and in $L^1$ to a limit, say $M_\infty^{\psi,q} $. We
deduce that
%
%
\begin{equation}
\label{eq:Zinfini}
\rE^\psi_x  \bigl[\mathrm{e}^{- \psi(q) \sigma}\ind_
{\{Z_\infty=0\}}  \bigr] =\mathrm{e}^{-qx}.
\end{equation}
Letting $q$ decrease to $q_0$, we get that $\rP^{\psi}_x
(\sigma
<+\infty, Z_\infty=0  )=\mathrm{e}^{-q_0 x} =\break\rP_x^\psi
(Z_\infty
=0 )$. This
implies that $\rP^\psi_x$ a.s. $\{\sigma=+\infty\}\subset\{
Z_\infty
=+\infty\}$. We thus deduce that $\rP^\psi_x$ a.s. $\{Z_\infty
=+\infty\}= \{\sigma=+\infty\}$. Notice also that \reff{eq:Minfini}
holds.

Notice that \reff{eq:Zinfini} readily implies \reff{eq:IZinfini}.
This proves Property (i) of the lemma and \reff{eq:Minfini}.

Property (ii) is then a consequence of Theorem \ref{theo:mart},
Property (ii) and
the convergence in $L^1$ of the martingale $(M^{\psi, q}_a, a\geq0)$
towards $M_\infty^{\psi,q} $.

Property (iii) is a consequence of (ii) with $q=q_0$ and \reff{eq:PZI=0}.
\end{pf}


\section{L\'{e}vy continuum random tree}\label{sec:crt}

We recall here the construction of the L\'{e}vy continuum random tree (CRT)
introduced in \cite{lgljbplpep,lgljbplplfss} and developed later in
\cite{dlgrtlpsbp} for critical or sub-critical branching mechanism. We
will emphasize on the height process and the exploration process which
are the key tools to handle this tree. The results of this section are
mainly extracted from \cite{dlgrtlpsbp}, except for the next
subsection which is extracted from \cite{lgrrt}.

\subsection{Real trees and their coding by a continuous function}

Let us first define what a real tree is.

\begin{defi}
A metric space $(\ct,d)$ is a real tree if the following two
properties hold for every $v_1,v_2\in\ct$:
\begin{longlist}[(ii)]
\item[(i)]  {(unique geodesic)}
There is a unique isometric map $f_{v_1,v_2}$
from $[0,d(v_1,\allowbreak v_2)]$ into $\ct$ such that
\[
f_{v_1,v_2}(0)=v_1\quad\mbox{and}\quad
f_{v_1,v_2}(d(v_1,v_2))=v_2.
\]
\item[(ii)]  {(no loop)}
If $q$ is a continuous injective map from $[0,1]$ into
$\ct$ such that $q(0)=v_1$ and $q(1)=v_2$, we have
\[
q([0,1])=f_{v_1,v_2}([0,d(v_1,v_2)]).
\]
\end{longlist}
A rooted real tree is a real tree $(\ct,d)$ with a distinguished
vertex $v_\varnothing$ called the root.
\end{defi}

Let $(\ct,d)$ be a rooted real tree. The range of the mapping
$f_{v_1,v_2}$ is denoted by $\lb v_1,v_2,\rb$ (this is the line
between $v_1$ and $v_2$ in the tree). In particular, for every
vertex $v\in\ct$, $\lb v_\varnothing,v\rb$ is the path going from the
root to $v$ which we call the ancestral line of vertex $v$. More
generally, we say that a vertex $v$ is an ancestor of a vertex $v'$
if $v\in\lb v_\varnothing,v'\rb$. If $v,v'\in\ct$, there is a unique
$a\in\ct$ such that $\lb
v_\varnothing,v\rb\cap\lb v_\varnothing,v'\rb=\lb v_\varnothing, a\rb
$. We
call $a$ the most recent common ancestor to $v$ and $v'$. By
definition, the degree of a vertex $v\in\ct$ is the number of
connected components of $\ct\setminus\{v\}$. A vertex $v$ is called
a leaf if it has degree~1. Finally, we set $\lambda$ the
one-dimensional Hausdorff measure on~$\ct$.

The coding of a compact real tree by a continuous function is
now well known and is a key tool for defining random real trees (see Figure~\ref{fig1}). We
consider a continuous function $g \dvtx   [0,+\infty)\longrightarrow
[0,+\infty)$ with compact support and such that $g(0)=0$. We also
assume that $g$ is not identically 0. For every $0\le s\le t$, we set
\[
m_g(s,t)=\inf_{u\in[s,t]}g(u)
\]
and
\[
d_g(s,t)=g(s)+g(t)-2m_g(s,t).
\]
We then introduce the equivalence relation $s\sim t$ if and only if
\mbox{$d_g(s,t)=0$}. Let $\ct_g$ be the quotient space $[0,+\infty)/\sim$. It
is easy to check that $d_g$ induces a distance on $\ct_g$. Moreover,
$(\ct_g,d_g)$ is a compact real tree (see \cite{dlgpfalt}, Theorem~2.1).
We say that $g$ is the height process of the tree $\ct_g$.

%
\begin{figure}

\includegraphics{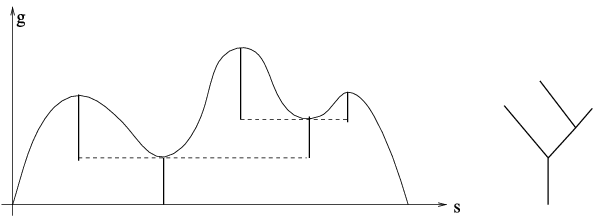}

\caption{A height process $g$ and its associated real
tree.}\label{fig1}
\end{figure}

In order to define a random tree, instead of taking a tree-valued
random variable (which implies defining a $\sigma$-field on the set of
real trees), it suffices to take a continuous stochastic process for
$g$. For instance,
when $g$ is a~normalized Brownian excursion, the associated real tree is
Aldous's CRT (up to a factor 2) \cite{acrt3}. We present now how we
can define a height
process that codes a random real trees describing the genealogy of a
(sub)critical CB with branching mechanism $\psi$. This height process
is defined via a L\'{e}vy process that we first introduce.

\subsection{The underlying L\'{e}vy process}\label{subsec:levy}
We assume that $\psi$ given by \reff{eq:def_psi} is (sub)criti\-cal,
that is,
%
%
\begin{equation}\label{eq:def_alpha}
\alpha:=\psi'(0)=\tilde\alpha-\int_{(1,+\infty)} \ell
\pi(d\ell)\geq0
\end{equation}
and that
%
%
\begin{equation}\label{eq:infinite_variation}
\beta>0\quad\mbox{or}\quad\int_{(0,1)}\ell\pi(d\ell)=+\infty.
\end{equation}

We consider a $\R$-valued L\'{e}vy process $X=(X_t,t\ge
0)$ with no negative jumps, starting from 0 and with Laplace exponent
$\psi$ under the probability measure $\P^\psi$: for $\lambda\ge0$
$\E^\psi [\mathrm{e}^{-\lambda X_t}
]=\mathrm{e}^{t\psi(\lambda)}$.
By assumption \reff{eq:infinite_variation}, $X$ is of infinite
variation $\P^\psi$-a.s.

We introduce some processes related to $X$. Let $\cj=\{s\geq0;
X_s\neq
X_{s-}\}$ be the set of jump times of $X$. For $s\in\cj$, we denote by
\[
\Delta_s= X_s- X_{s-}
\]
the size of the jump of $X$ at time $s$ and $\Delta_s=0$ otherwise.
Let $I=(I_t,t\ge0)$
be the infimum process of $X$,
\[
I_t=\inf_{0\le s\le t}X_s,
\]
and let
$S=(S_t,t\ge0)$ be the supremum process,
\[
S_t=\sup_{0\le s\le t}X_s.
\]
We will also consider for every $0\le s\le t$ the infimum of $X$ over
$[s,t]$,
\[
I_t^s=\inf_{s\le r\le t}X_r.
\]

The point 0 is regular for the Markov process $X-I$, and $-I$ is the
local time of $X-I$ at 0 (see \cite{bpl}, Chapter VII). Let $\N^\psi
$ be
the associated excursion measure of the process $X-I$ away from 0. Let
$\sigma=\inf\{t>0; X_t-I_t=0\}$ be the length of the excursion of $X-I$
under $\N^\psi$ [we shall see after Proposition~\ref{prop:RK} that the
notation $\sigma$ is consistent with~\reff{eq:def-sigma}]. By
assumption~\reff{eq:infinite_variation}, we have $X_0=I_0=0$ $\N^\psi$-a.e.

Since $X$ is of infinite variation, 0 is also regular for the Markov
process $S-X$. The local time, $L=(L_t, t\geq0)$, of $S-X$ at 0 will be
normalized so that
\[
\E^\psi[\mathrm{e}^{-\lambda S_{L^{-1}_t}}]= \mathrm{e}^{- t \psi
(\lambda )/\lambda},
\]
where $L^{-1}_t=\inf\{ s\geq0; L_s\geq t\}$ (see also \cite{bpl}
Theorem VII.4(ii)).

\subsection{The height process and the L\'{e}vy CRT}
For each $t\geq0$, we consider the reversed process at time $t$,
$\hat X^{(t)}=(\hat X^{(t)}_s,0\le s\le t)$ by
\[
\hat X^{(t)}_s=
X_t-X_{(t-s)-} \qquad\mbox{if } 0\le s<t,
\]
and $\hat X^{(t)}_t=X_t$. The two processes $(\hat X^{(t)}_s,0\le s\le t)$
and $(X_s,0\le s\le t)$ have the same law. Let $\hat S^{(t)}$ be the
supremum process of $\hat X^{(t)}$ and $\hat L^ {(t)}$ be the
local time at $0$ of $\hat S^{(t)} - \hat X^{(t)}$ with the same
normalization as $L$.

\begin{defi}[(\cite{dlgrtlpsbp}, Definition
1.2.1)]\label{def:height_process}
There exists a lower semi-continu\-ous modification of the process $(\hat
L^{(t)},t\ge0)$. We denote by $(H_t,t\ge0)$ this modification.
\end{defi}

We can also define this process $H$ by approximation: it is a
modification of the process
%
%
\begin{equation}\label{eq:defHbis}
H_t^0=\liminf_{\varepsilon\to
0}\frac{1}{\varepsilon}\int_0^t\ind_{\{X_s<I_t^s+\varepsilon\}}\,ds
\end{equation}
(see \cite{dlgrtlpsbp}, Lemma 1.1.3). In general,
$H$ takes its values in $[0,+\infty]$, but we have that, a.s. for every
$t\ge0$:
\begin{itemize}
\item$H_s<+\infty$ for every $s<t$ such that $X_{s-}\le I_t^s$;
\item$H_t<+\infty$ if $\Delta X_t>0$
\end{itemize}
(see \cite{dlgrtlpsbp}, Lemma
1.2.1).

We use this process to define a random real-tree that we call the
$\psi$-L\'{e}vy CRT via the procedure described above. We will see that
this CRT does represent the genealogy of a $\psi$-CB.

\subsection{The exploration process}
\label{sec:PLRT}
The height process is not Markov in general. But it is a very simple
function of a measure-valued Markov process, the so-called exploration
process.

If $E$ is a locally compact polish space, let $\cb(E)$ [resp., $\cb
_+(E)$] be the set of
real-valued measurable (resp., and nonnegative) functions defined on $E$
endowed with its Borel $\sigma$-field, and let $\cm(E)$
[resp., $\cm_f(E)$] be the set of $\sigma$-finite (resp., finite)
measures on $E$, endowed with the topology of vague (resp., weak)
convergence. For any measure $\mu\in\cm(E)$ and $f\in\cb_+(E)$, we write
\[
\langle\mu,f\rangle=\int_{E} f(x) \mu(dx).
\]

The exploration process $\rho=(\rho_t,t\ge0)$ is a
$\cm_f(\R_+)$-valued process defined as follows: for every $f\in
\cb_+(\R_+) $, $\langle\rho_t,f\rangle=\int_{[0,t]} d_sI_t^sf(H_s)$
(where $d_sI_t^s$ denotes the Lebesgue--Stieljes\vadjust{\goodbreak} integral with respect
to the nondecreasing map $s\mapsto I_t^s$),
or equivalently
%
%
\begin{equation}\label{eq:def_rho}
\rho_t(dr)=\mathop{\mathop{\sum}_{{0<s\le t}}}_
{X_{s-}<I_t^s}(I_t^s-X_{s-})\delta_{H_s}(dr)+\beta\ind_{[0,H_t]}(r)\,dr.
\end{equation}
In particular, the total mass of $\rho_t$ is
$\langle\rho_t,1\rangle=X_t-I_t$.

Recall the definition \reff{def:H} of $H(\mu)$ for a measure $\mu$
with compact support and set by
convention $H(0)=0$.

\begin{prop}[(\cite{dlgrtlpsbp}, Lemma 1.2.2 and
formula (1.12))]
\label{prop:rho}
Almost surely, for every $t>0$:
\begin{itemize}
\item$H(\rho_t)=H_t$;
\item$\rho_t=0$ if and only if $H_t=0$;
\item if $\rho_t\neq0$, then $\Supp\rho_t=[0,H_t]$;
\item$\rho_t= \rho_{t^-} + \Delta_t \delta_{H_t}$, where $\Delta_t=0$
if $t\notin\cj$.
\end{itemize}
\end{prop}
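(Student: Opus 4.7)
The plan is to check the four bullets directly from the formula \reff{eq:def_rho} for $\rho_t$ and the approximate formula \reff{eq:defHbis} for $H_t$, relying crucially on the infinite-variation hypothesis \reff{eq:infinite_variation} (in particular $\beta>0$, which will ensure that the Lebesgue density in \reff{eq:def_rho} has full support up to $H_t$).

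I would start with the second bullet. The total mass identity $\langle \rho_t, 1\rangle = X_t - I_t$ recalled just after \reff{eq:def_rho} gives $\rho_t = 0$ iff $X_t = I_t$. On the other hand, because $H$ is (a modification of) a local time at $0$ of $\hat S^{(t)} - \hat X^{(t)}$, and because $X$ reaches a new infimum at time $t$ exactly when the time-reversed path $\hat X^{(t)}$ reaches a new supremum at time $t$, we have $H_t = 0$ iff $X_t = I_t$. Combining the two equivalences yields the second bullet.

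Next, assuming $\rho_t \neq 0$, and hence $H_t > 0$, I would prove the first and third bullets together. Since $\beta > 0$, the density term $\beta\ind_{[0,H_t]}(r)\,dr$ in \reff{eq:def_rho} has support exactly $[0,H_t]$; this gives $\Supp\rho_t \supseteq [0,H_t]$ and $H(\rho_t) \ge H_t$. For the reverse inclusion, I would show that any jump index $s \in (0,t]$ appearing in the sum, i.e., with $X_{s-} < I_t^s$, satisfies $H_s \le H_t$. This is the analytic incarnation of the genealogical statement that such $s$ is an ancestor of $t$ in the CRT, and it rests on the fine path properties of $H$ from \cite{dlg:rtlpsbp} (notably their Lemma 1.2.1 and Proposition 1.2.1) rather than being immediate from \reff{eq:defHbis}. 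Granting it, $\Supp\rho_t \subseteq [0,H_t]$, and therefore $\Supp\rho_t = [0,H_t]$ and $H(\rho_t) = H_t$.

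Finally, for the c\`adl\`ag jump decomposition I would compute $\rho_t - \rho_{t^-}$ directly from \reff{eq:def_rho}. Isolating the boundary contribution $s=t$ in the sum, and using $I_t^t = X_t$, this term equals $\ind_{\{X_{t-} < X_t\}}(X_t - X_{t-})\delta_{H_t} = \Delta_t\delta_{H_t}$, which is $0$ if $t \notin \cj$. For the remaining contributions from $s<t$ and the Lebesgue density, a short case check shows continuity at $t$: if $t \notin \cj$, $u \mapsto I_u^s$ is continuous at $u=t$ and $H_u$ is left-continuous there, so $\rho_t = \rho_{t^-}$; if $t \in \cj$, the indices $s<t$ with $X_{s-} < I_{t^-}^s$ contribute identically to $\rho_{t^-}$ and $\rho_t$ after a routine check comparing $I_t^s - X_{s-}$ with $I_{t^-}^s - X_{s-}$, leaving only the atom $\Delta_t\delta_{H_t}$ as the net jump. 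The main obstacle is the ancestor-height inequality $H_s \le H_t$ under the condition $X_{s-} < I_t^s$: it is the analytic bridge between the explicit formula \reff{eq:def_rho} and the height process, and is not extractable from \reff{eq:defHbis} alone; once it is in hand, the remaining verifications reduce to direct inspection of the formula.
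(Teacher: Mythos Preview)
The paper does not give its own proof of this proposition: it is quoted verbatim from \cite{dlg:rtlpsbp} (their Lemma 1.2.2 and formula (1.12)) and no argument is supplied here. There is therefore no in-paper proof to compare your proposal against.

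On the substance of your sketch: the outline is sound, but it leans on two points that are themselves the nontrivial content of the cited reference. First, as you candidly note, the ancestor-height inequality $H_s\le H_t$ for indices $s$ with $X_{s-}<I_t^s$ cannot be extracted from the approximation \reff{eq:defHbis} alone; it requires the full local-time construction of $H$ carried out in \cite{dlg:rtlpsbp}. Second, in your treatment of the fourth bullet you assert that $H_u$ is left-continuous at $t$ when $t\notin\cj$, but by Definition \ref{def:height_process} the process $H$ is only taken to be lower semi-continuous, so this step is not justified as written and again relies on finer path regularity established in \cite{dlg:rtlpsbp}. These are exactly the reasons the authors cite the result rather than reprove it.
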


In the definition of the exploration process, as $X$ starts from 0, we
have $\rho_0=0$ a.s. To state the Markov property of $\rho$, we must
first define the process~$\rho$ started at any initial measure $\mu
\in
\cm_f(\R_+)$.

For $a\in[0, \langle\mu,1\rangle] $, we define the erased measure
$k_a\mu$ by
\[
k_a\mu([0,r])=\mu([0,r])\wedge(\langle\mu,1\rangle-a)  \qquad
\mbox{for $r\geq0$}.
\]
If $a> \langle\mu,1\rangle$, we set $k_a\mu=0$. In other words, the
measure $k_a\mu$ is the measure~$\mu$ erased by a mass $a$ backward from
$H(\mu)$.

For $\nu,\mu\in\cm_f(\R_+)$, and $\mu$ with compact support, we
define the concatenation $[\mu,\nu]\in\cm_f(\R_+) $ of the
two measures by
\[
 \langle[\mu,\nu],f \rangle= \langle\mu,f
\rangle+ \bigl\langle\nu,f\bigl(H(\mu)+\cdot\bigr) \bigr\rangle,
\qquad f\in\cb_+(\R_+).
\]

Finally, we set for every $\mu\in\cm_f(\R_+)$ and every $t>0$,
$\rho_t^\mu= [k_{-I_t}\mu,\rho_t]$. We say that $(\rho^\mu
_t, t\geq
0)$ is the process $\rho$ started at $\rho_0^\mu=\mu$. Unless there is
an ambiguity, we shall write $\rho_t$ for $\rho^\mu_t$. Unless it is
stated otherwise, we assume that $\rho$ is started at $0$.

\begin{prop}[(\cite{dlgrtlpsbp}, Proposition 1.2.3)]
The process $(\rho_t,t\ge0)$ is a~c\`{a}d-l\`{a}g strong Markov
process in
$\cm_f(\R_+)$.
\end{prop}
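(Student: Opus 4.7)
The plan is to derive the strong Markov property from the simple Markov property, and the latter from the Markov property of the underlying L\'evy process $X$ together with a shift identity for $\rho$. The càdlàg regularity has essentially been recorded in Proposition \ref{prop:rho} (the existence of left limits with the explicit jump $\Delta_t\delta_{H_t}$), so the main task is algebraic: reconcile $\rho_{s+t}$ with a suitable functional of the shifted process and of $\rho_s$.

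Fix $s\ge 0$ and define the shifted L\'evy process $X^{(s)}_u=X_{s+u}-X_s$, which under $\P^\psi$ is independent of $\cf_s$ and has the same law as $X$. Write $I^{(s)}_u=\inf_{0\le r\le u}X^{(s)}_r\le 0$ for its infimum and let $\tilde\rho=(\tilde\rho_u,u\ge 0)$ denote the exploration process built from $X^{(s)}$ via \reff{eq:def_rho}. The first step I would carry out is the shift identity
\[
\rho_{s+t}=\bigl[k_{-I^{(s)}_t}\rho_s,\tilde\rho_t\bigr],\qquad t\ge 0.
\]
The derivation is a bookkeeping argument from \reff{eq:def_rho}: split the integral defining $\langle\rho_{s+t},f\rangle$ into the contributions of jumps in $(0,s]$ and in $(s,s+t]$. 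For a jump time $u\in(0,s]$, one has $I^u_{s+t}=X_s+I^{(s)}_t$ as soon as the infimum over $[u,s+t]$ is attained after time $s$, and otherwise $I^u_{s+t}=I^u_s$; this is exactly the backward-erasing of a mass $-I^{(s)}_t$ from $\rho_s$, i.e.\ the $k_{-I^{(s)}_t}\rho_s$ part. For jump times in $(s,s+t]$ one recognizes, after reading height through the reversal procedure used to define $H$, the exploration process $\tilde\rho_t$ sitting above height $H(k_{-I^{(s)}_t}\rho_s)$, which matches the definition of the concatenation $[\cdot,\cdot]$.

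Once this identity is established, the simple Markov property is immediate: because $\tilde\rho$ is built from $X^{(s)}$ alone, it is independent of $\cf_s$ and has the same law as $\rho$ started from $0$, so for a measurable bounded $F$ on path space,
\[
\E^\psi\bigl[F\bigl((\rho_{s+t})_{t\ge 0}\bigr)\,\big|\,\cf_s\bigr]
=\Phi(\rho_s),\qquad
\Phi(\mu)=\E^\psi\bigl[F\bigl((\rho^\mu_t)_{t\ge 0}\bigr)\bigr],
\]
which is exactly the Markov property in the sense that $(\rho^\mu_t,t\ge 0)$ defined just before the statement is the correct family of translates. The càdlàg property follows from Proposition \ref{prop:rho} and the fact that between jumps of $X$, the measure $\rho_t$ varies continuously in the weak topology (the atoms keep their positions, and the continuous Brownian part $\beta\ind_{[0,H_t]}(r)\,dr$ changes continuously because $H$ is lower semi-continuous and $t\mapsto\langle\rho_t,1\rangle=X_t-I_t$ is càdlàg with jumps exactly at $\cj$).

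Upgrading from simple to strong Markov is then standard: the map $\mu\mapsto\Phi(\mu)$ is measurable (one checks this on exponential test functionals of $\rho$, which reduce by \reff{eq:def_rho} to functionals of $X$), so one approximates a stopping time $T$ by the dyadic stopping times $T_n=\lceil 2^n T\rceil/2^n$, applies the simple Markov property at each deterministic sublevel, and passes to the limit using right-continuity of $\rho$ and of the filtration. The only delicate point, and the one I would expect to spend the most care on, is the verification of the shift identity at jump times of $X$ of size exactly $-I^{(s)}_t$, where the erasing map $k_a$ is only continuous from the right in $a$; here one has to argue that the exceptional set has $dt$-measure zero so that the identity holds for every $t$ outside a null set and extends by the càdlàg property to all $t\ge 0$.
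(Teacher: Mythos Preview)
The paper does not give its own proof of this statement: it is quoted verbatim from \cite{dlg:rtlpsbp}, Proposition 1.2.3, with no argument supplied. So there is no ``paper's proof'' to compare against; you have sketched a proof of a result the authors simply import.

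That said, your outline is essentially the argument of \cite{dlg:rtlpsbp}. The shift identity $\rho_{s+t}=[k_{-I^{(s)}_t}\rho_s,\tilde\rho_t]$ is exactly what underlies the definition of $\rho^\mu$ and is the heart of the matter. One point worth tightening: rather than proving the simple Markov property and then upgrading via dyadic approximation, it is cleaner (and this is what Duquesne and Le~Gall do) to invoke the strong Markov property of the underlying L\'evy process $X$ directly. For any stopping time $T$ of the natural filtration of $X$, the shifted process $X^{(T)}$ is independent of $\cf_T$ with the law of $X$, and the same bookkeeping gives $\rho_{T+t}=[k_{-I^{(T)}_t}\rho_T,\tilde\rho_t]$ pathwise. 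This yields the strong Markov property for $\rho$ in one stroke and sidesteps the regularity of $\mu\mapsto\Phi(\mu)$ that your dyadic-limit step would need; that regularity is not obvious here since the exploration process is not known to be Feller in the usual sense. Your remark that the c\`adl\`ag property follows from Proposition~\ref{prop:rho} is correct in spirit, though the claim that $\rho$ is continuous between jumps of $X$ is not quite right: erosion by the running infimum can make $\rho_t$ vary abruptly even when $X$ does not jump. The c\`adl\`ag property really comes from $t\mapsto X_t-I_t$ being c\`adl\`ag together with the explicit left-limit formula.
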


\begin{rem}
\label{rm:rho-L}
{F}rom the construction of $\rho$, we get that a.s.
$\rho_t=0$ if and only if $ -I_t\geq{\langle\rho_0,1\rangle}$ and
$X_t-I_t=0$. This implies that $0$ is also a regular point for
$\rho$.
Notice that $\N^\psi$ is also the excursion measure of the process
$\rho$
away from $0$, and that $\sigma$, the length of the excursion, is
$\N^\psi$-a.e. equal to $\inf\{ t>0; \rho_t=0\}$.
\end{rem}

\subsection{Notations}

We consider the set $\cd$ of c\`{a}d-l\`{a}g processes in $\cm_f(\R_+)$,
endowed with the Skorohod topology and the Borel $\sigma$-field. In what
follows, we denote by $\rho=(\rho_t,t\ge0)$ the canonical process
on\vadjust{\goodbreak}
this set. We still denote by $\P^\psi$ the probability measure on
$\cd$
such that the canonical process is distributed as the
exploration process associated with the branching mechanism $\psi$, and
by $\N^\psi$ the corresponding excursion measure.

\subsection{Local time of the height process}\label{sec:lien-rho-crt}
The local time of the height process is defined through the next
result.
\begin{prop}[(\cite{dlgrtlpsbp}, Lemma 1.3.2 and Proposition 1.3.3)]
\label{prop:LT}
There exists a jointly measurable process $(L^a_s, a\geq0, s\geq0)$
which is continuous and nondecreasing in the variable $s$ such that:
\begin{itemize}
\item
for every $t\geq0$, $\lim_{\varepsilon
\rightarrow0} \sup_{a\geq0} \E^\psi [\sup_{s\leq t}
 |\varepsilon^{-1} \int_0^s \ind_{\{ a<H_r\leq a+\varepsilon\}
}\,
dr - L^a_s | ]=0$;
\item
for every $t\geq0$, $\lim_{\varepsilon
\rightarrow0} \sup_{a\geq\varepsilon} \E^\psi [\sup_{s\leq t}
 |\varepsilon^{-1} \int_0^s \ind_{\{ a-\varepsilon<H_r\leq a\}
}\,
dr - L^a_s | ]=0$;
\item$\P^\psi$-a.s., for every $t\geq0$, $L^0_t=-I_t$;
\item the occupation time formula holds: for any nonnegative
measurable function $g$ on $\R_+$ and any
$s\geq0$, $\int_0^s g(H_r)\, dr =\int_{(0,{+\infty} )}
g(a) L^a_s\, da$.
\end{itemize}
\end{prop}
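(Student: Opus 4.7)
The plan is to construct a candidate process $(L^a_s)$ as the $L^1$-limit in the first bullet, establish joint regularity, and derive the remaining three properties from the construction. I will use throughout the fact that $H_r=H(\rho_r)$ (Proposition~\ref{prop:rho}) so that all quantities are functionals of the strong Markov process $\rho$.

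First I would fix $a\ge 0$ and handle existence of the limit $L^a_s$. Introduce the first hitting time $T_a=\inf\{s\ge 0:H_s>a\}$ and, more generally, the successive returns of $H$ below and above level $a$. The approximating average
\[
L^{a,\varepsilon}_s=\frac1\varepsilon\int_0^s\ind_{\{a<H_r\le a+\varepsilon\}}\,dr
\]
only charges the portions of trajectory after $T_a$; by the strong Markov property of $\rho$ at $T_a$ and at subsequent returns, together with the Poissonian structure of the excursions of $\rho$ above level $a$ (which, by the Bismut-type decomposition of $\rho$, can be read off from the underlying L\'evy process $X$), each excursion contributes independently. A direct moment computation shows $\E^\psi[(L^{a,\varepsilon}_s-L^{a,\varepsilon'}_s)^2]\to 0$ as $\varepsilon,\varepsilon'\to 0$, giving a limit $L^a_s$ in $L^2$ and hence in $L^1$. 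The uniformity in $s$ follows because $s\mapsto L^{a,\varepsilon}_s$ is non-decreasing, so Doob-type maximal estimates upgrade the pointwise convergence to uniform convergence on compacts. The case $a=0$ is special and yields the third bullet: since $\rho_r=0\iff H_r=0\iff X_r=I_r$, and $-I$ is (by definition and normalization) the local time of $X-I$ at $0$, one checks directly that $\varepsilon^{-1}\int_0^s\ind_{\{0<H_r\le\varepsilon\}}dr\to -I_s$ uniformly in $s$, hence $L^0_s=-I_s$.

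Next I would produce a jointly measurable version with $s$-continuity. The approximants $L^{a,\varepsilon}_s$ are already continuous and non-decreasing in $s$; the task is to control their dependence on $a$. Using again the excursion structure of $\rho$ above level $a$, one obtains estimates of the form
\[
\E^\psi\bigl[\sup_{s\le t}|L^{a,\varepsilon}_s-L^{b,\varepsilon}_s|^p\bigr]\le C_{t,p}\,|a-b|^{\eta}
\]
for some $p>1/\eta$, uniformly in $\varepsilon$ small. A Kolmogorov continuity-type argument then produces a version of $(a,s)\mapsto L^a_s$ that is continuous in $s$ and H\"older in $a$; this is jointly measurable. The equivalent approximation from below (second bullet) is obtained by writing
\[
L^{a,\varepsilon}_s-\frac1\varepsilon\int_0^s\ind_{\{a-\varepsilon<H_r\le a\}}dr
=\frac1\varepsilon\int_0^s\bigl[\ind_{\{a<H_r\le a+\varepsilon\}}-\ind_{\{a-\varepsilon<H_r\le a\}}\bigr]dr,
\]
whose $L^1$-norm vanishes as $\varepsilon\to 0$ by the H\"older continuity in the level variable proved above (and the fact that a.s.\ Lebesgue-a.e.\ $r$, $H_r$ is not a fixed value $a$).

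Finally, for the occupation time formula, it suffices to take $g=\ind_{(a,a+\varepsilon]}/\varepsilon$, pass to the limit in $\varepsilon$ using the first bullet, and extend by a monotone class argument to all non-negative measurable $g$. The main obstacle I expect is the H\"older-in-$a$ estimate for the approximants: one cannot simply differentiate the indicator, and the bound must come from fluctuation identities for $X$ describing the excursion measure of $\rho$ above an arbitrary level. Once this estimate is in hand, all four assertions follow by the scheme above.
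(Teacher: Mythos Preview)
This proposition is not proved in the paper: it is quoted verbatim from Duquesne and Le~Gall \cite{dlg:rtlpsbp} (Lemma~1.3.2 and Proposition~1.3.3) and used as a black box. There is therefore no proof in the present paper to compare your attempt against.

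That said, a few comments on your sketch are in order. The overall architecture (build $L^a$ as an $L^2$-limit of the occupation averages, use the excursion structure above level $a$, then deduce the occupation formula by monotone class) is indeed the strategy of \cite{dlg:rtlpsbp}. However, the step where you claim a Kolmogorov-type bound
\[
\E^\psi\Bigl[\sup_{s\le t}\bigl|L^{a,\varepsilon}_s-L^{b,\varepsilon}_s\bigr|^p\Bigr]\le C_{t,p}\,|a-b|^{\eta}
\]
and then deduce H\"older regularity in $a$ is not how the result is obtained, and it is stronger than what is asserted or needed: the proposition only claims \emph{joint measurability} of $(a,s)\mapsto L^a_s$ and continuity in $s$, not continuity (let alone H\"older continuity) in the level variable. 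In \cite{dlg:rtlpsbp} the uniformity in $a$ of the approximation is obtained by reducing, via the strong Markov property at the first hitting time of level $a$ and a time-reversal/shift argument, to a single estimate at level $0$ (where $L^0=-I$ is explicit); no Kolmogorov argument in $a$ enters. Your derivation of the second bullet from ``H\"older continuity in the level variable'' therefore rests on an unproved and unnecessary claim. If you want to follow the original route, replace that step by the reduction to level $0$ just described.
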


Let $T_x=\inf\{t\geq0; I_t\leq-x\}$. We have the following Ray--Knight
theorem which links the $\psi$-L\'{e}vy CRT with the $\psi$-CB.
\begin{prop}[(\cite{dlgrtlpsbp}, Theorem 1.4.1)]
\label{prop:RK}
The process $(L^a_{T_{x}}, a\geq0)$ is distributed under $\P^\psi$
as $Z$ under
$\rP_x^\psi$ (i.e., is a CB with branching mechanism~$\psi$ starting
at $x$).
\end{prop}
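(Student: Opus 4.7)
The plan is to identify the law of the process $(L^a_{T_x}, a\ge 0)$ through its Laplace transforms, by combining a branching property in the initial condition $x$ with an excursion decomposition of $\rho$ at a fixed height.

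First, I would establish additivity in $x$. By the strong Markov property of $X$ at the stopping time $T_x$, the shifted process $(X_{T_x+t}-X_{T_x},\,t\ge 0)$ is distributed as $X$ and is independent of $\cf_{T_x}$. Since $T_{x+y}=T_x+\tilde T_y$, where $\tilde T_y$ is defined analogously on the shifted process, and since the local time of $H$ is an additive functional (via the approximation in Proposition \ref{prop:LT}), we get $L^a_{T_{x+y}}=L^a_{T_x}+\tilde L^a_{\tilde T_y}$ with $\tilde L$ independent of $L^a_{T_x}$ and identically distributed. Hence there is a non-negative function $v=v(a,\lambda)$ such that
\begin{equation*}
\E^\psi\bigl[\expp{-\lambda L^a_{T_x}}\bigr]=\expp{-x\,v(a,\lambda)}.
\end{equation*}
The initial condition $v(0,\lambda)=\lambda$ follows from $L^0_{T_x}=-I_{T_x}=x$, which is part of Proposition~\ref{prop:LT}.

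Second, I would derive the differential equation $\partial_a v=-\psi(v)$, i.e., the integrated form \reff{eq:int_u}. The key ingredient is the decomposition of $\rho$ at level $a$: the excursions of the height process $H$ above $a$ on the interval $[0,T_x]$ form a Poisson point process when indexed by the local time $L^a$, and each such excursion carries an independent copy of $\rho$ under $\N^\psi$ rooted at height $a$. Consequently, $(L^{a+b}_{T_x},\,b\ge 0)$ depends on the past only through $L^a_{T_x}$, and on that level it plays the role of an initial mass for a fresh exploration process. Applying the $x$-additivity identity on this fresh piece yields
\begin{equation*}
\E^\psi\bigl[\expp{-\lambda L^{a+b}_{T_x}}\,\big|\,L^a_{T_x}\bigr]=\expp{-L^a_{T_x}\,v(b,\lambda)},
\end{equation*}
hence the semigroup identity $v(a+b,\lambda)=v(a,v(b,\lambda))$. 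Differentiating at $b=0$ gives $\partial_a v=-\psi(v)$. By the uniqueness of the non-negative solution to \reff{eq:int_u}, $v=u$, so $L^a_{T_x}$ has the same one-dimensional law as $Z_a$ under $\rP_x^\psi$.

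Third, the full equality in law follows by iteration of the conditional identity displayed above: it expresses precisely the Markov transition of a CB with branching mechanism $\psi$. Combined with the one-dimensional identification, this pins down the law of the whole process $(L^a_{T_x},\,a\ge 0)$.

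The main obstacle is the excursion decomposition of $\rho$ at height $a$: one must verify that the excursions of $H$ above level $a$, re-indexed by the local time $L^a$, form a Poisson point process whose intensity measure is exactly $\N^\psi$, and that the normalization of $L$ introduced in Section~\ref{subsec:levy} is the one that makes this decomposition produce the correct branching mechanism $\psi$ (rather than some multiple). Granted that Poisson structure, the branching and semigroup identities above are essentially algebraic consequences, and the Ray--Knight statement reduces to the uniqueness for the integral equation \reff{eq:int_u}.
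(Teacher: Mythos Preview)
The paper does not give its own proof of this statement; it is quoted as a citation from \cite{dlg:rtlpsbp}, Theorem~1.4.1. Your outline is essentially the strategy used there: infinite divisibility in $x$ from the strong Markov property of $X$ at $T_x$, followed by an excursion decomposition of the height process above a fixed level to obtain the Markov property in $a$ and identify the Laplace exponent.

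One point deserves a word of caution. The semigroup identity $v(a+b,\lambda)=v(a,v(b,\lambda))$, differentiated at $b=0$, yields $\partial_a v(a,\lambda)=g(v(a,\lambda))$ with $g(\mu):=\partial_b v(0,\mu)$; it does not by itself tell you that $g=-\psi$. That identification is exactly the content of the ``main obstacle'' you flag in your last paragraph: one must show that the excursions of $H$ above level $a$, indexed by $L^a$, form a Poisson point process with intensity $\N^\psi$, and then compute $\N^\psi[1-\expp{-\lambda L^b_\sigma}]$ (equivalently $\partial_b v(0,\lambda)$) and match it with $\psi$ via the chosen normalization of local time. In \cite{dlg:rtlpsbp} this step carries most of the weight (their Proposition~1.3.1 and Corollary~1.3.4); once it is in place, the rest of your outline is indeed algebraic.
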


Let
$\P^{\psi}_x$ be the distribution of $(\rho_{t\wedge T_x},
t\geq0)$ under $\P^{\psi}$.
We set $Z_a=L^a_{T_x}$ under $\P^\psi_x$ and $Z_a=L^a_\infty$ under
$\N^\psi$ and (under $\P^\psi_x$ or $\N^\psi$)
%
%
\begin{equation}
\label{eq:def-s}
\sigma(\rho)=\int_0^\infty\ind_{\{\rho_t\neq0\}}\, dt.
\end{equation}
The occupation time formula implies that
%
%
\begin{equation}\label{eq:defs2}
\sigma(\rho)=\int_0^{+\infty}
Z_a \,da,
\end{equation}
which is consistent with notation \reff{eq:def-sigma}.
When there is no confusion, we shall write $\sigma$ for
$\sigma(\rho)$. We call $\sigma(\rho)$ the total mass of the CRT as it
represents the total population of the associated CB.

Exponential formula for the Poisson point process of jumps of the
inverse subordinator of $-I$ gives (see also the beginning of Section
3.2.2. \cite{dlgrtlpsbp}) that for $\lambda>0$
%
%
\begin{equation}
\label{eq:N_s}
\N^\psi [1 -\mathrm{e}^{-\lambda \sigma} ] =\psi
^{-1}(\lambda).
\end{equation}

We also recall Lemma 1.6 of \cite{adfalp}.
\begin{lem}
\label{lem:X_pruned}
Let
$\theta>0$. The excursion measure $\N^{\psi_\theta}$ is absolutely
continuous w.r.t.\vadjust{\goodbreak} $\N^{\psi}$ with density $\mathrm{e}^{-\psi
(\theta) \sigma}$: for any nonnegative measurable function~$F$
on the space of excursions, we have
\[
\N^{\psi_\theta}  [ F(\rho) ]
=\N^{\psi} \bigl[ F(\rho)\mathrm{e}^{-\psi(\theta)\sigma}
\bigr].
\]
\end{lem}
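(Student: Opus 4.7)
The plan is to lift the Girsanov identity of Theorem \ref{theo:mart} from the CB level to the exploration process level, and then to transfer it to the excursion measure by the Poisson point decomposition of $\rho$ away from $0$. The route goes through an Esscher transform of the underlying L\'evy process $X$.

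First step: exploration-level Girsanov. Since $\E^\psi[\expp{-\theta X_t}]=\expp{t\psi(\theta)}$, the process $M^X_t=\expp{-\theta X_t-t\psi(\theta)}$ is a positive martingale in the natural filtration of $X$ under $\P^\psi$, and the change of measure $d\P^{\psi_\theta}/d\P^\psi=M^X_t$ on this filtration replaces the Laplace exponent $\psi$ by $\psi_\theta$. Because $\rho$ is a measurable functional of $X$ via \reff{eq:def_rho}, the same density transports the law of $\rho$. For $\theta>0$ and $\psi$ (sub)critical, $\psi(\theta)\ge 0$; moreover $X$ has no negative jumps, so $X_{T_x}=-x$ and $M^X_{T_x}\le\expp{\theta x}$ is bounded, which justifies optional stopping at $T_x$. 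Finally, as $X$ is of infinite variation, $\{X_t=I_t\}$ has zero Lebesgue measure a.s., so $T_x=\sigma(\rho)$ $\P^\psi_x$-a.s. Combining these,
\[
\E^{\psi_\theta}_x[G(\rho)]=\E^\psi_x\bigl[G(\rho)\,\expp{\theta x-\psi(\theta)\sigma(\rho)}\bigr]
\]
for every non-negative measurable $G$ on the path space of $\rho$.

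Second step: passing to $\N^\psi$. By It\^o excursion theory, under $\P^\psi_x$ the excursions $(\omega_i)$ of $\rho$ away from $0$ form a Poisson point measure on $[0,x]\times\cd$ with intensity $d\ell\otimes\N^\psi(d\omega)$, and $\sigma(\rho)=\sum_i\sigma(\omega_i)$. Applied to $G(\rho)=\expp{-\sum_i F(\omega_i)}$ with $F\ge 0$ measurable on excursion space, the exponential formula on both sides gives
\[
\expp{-x\N^{\psi_\theta}[1-\expp{-F}]}=\expp{\theta x}\,\expp{-x\N^\psi[1-\expp{-F-\psi(\theta)\sigma}]}.
\]
Taking logarithms, dividing by $x$, splitting the integrand on the right, and using \reff{eq:N_s} in the form $\N^\psi[1-\expp{-\psi(\theta)\sigma}]=\psi^{-1}(\psi(\theta))=\theta$ cancels the $\theta$ and yields
\[
\N^{\psi_\theta}[1-\expp{-F}]=\N^\psi\bigl[(1-\expp{-F})\,\expp{-\psi(\theta)\sigma}\bigr].
\]
Since $F$ is arbitrary, a standard monotone class argument on $\sigma$-finite measures (localizing on $\{\sigma>\varepsilon\}$, where both sides are finite) promotes this to the announced identity for every non-negative measurable functional on excursion space.

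The main obstacle is the Esscher step: one must justify that the density $M^X$, defined on the filtration of $X$, survives optional stopping at $T_x$ and can then be identified with $\expp{\theta x-\psi(\theta)\sigma(\rho)}$ via $X_{T_x}=-x$ and $T_x=\sigma(\rho)$. Everything else (the Poisson point representation of excursions, the exponential formula, and the Laplace formula for $\sigma$) is already recorded in Section \ref{sec:crt}.
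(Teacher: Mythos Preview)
Your argument is correct. The paper itself does not prove this lemma: it merely recalls it as Lemma~1.6 of \cite{ad:falp}. What you have written is a self-contained proof following the standard route---the Esscher transform $\expp{-\theta X_t-t\psi(\theta)}$ on the underlying L\'evy process, stopped at $T_x$ (where boundedness follows from $X_{t\wedge T_x}\ge -x$ and $\psi(\theta)\ge 0$), then the exponential formula for the Poisson point process of excursions (Lemma~\ref{lem:poisson_repr}) together with \reff{eq:N_s} to cancel the extra $\theta$. This is essentially the argument in \cite{ad:falp}, so there is no genuine methodological difference; you have simply unpacked the citation.
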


We recall the Poisson representation of $\P^\psi_x$ based on the
excursion measure~$\N^\psi$. Let
$(\tilde\alpha_i,\tilde\beta_i)_{i\in\tilde I}$ be the excursion intervals
of $\rho$ away from $0$. For every $i\in\tilde I$, $t\ge0$, we set
\[
\tilde\rho^{(i)}_t=\rho_{(\tilde\alpha_i+t)\wedge\tilde\beta_i}.
\]
We
deduce from Lemma 4.2.4 of \cite{dlgrtlpsbp} the following lemma.

\begin{lem}\label{lem:poisson_repr}
The point measure $\sum_{i\in\tilde
I}\delta_{\tilde\rho^{(i)}}(d\mu)$ is under $\P_x^\psi$ a Poisson
measure with intensity $x\N^\psi(d\mu)$.
\end{lem}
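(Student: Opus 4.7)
The plan is to deduce this from classical It\^o excursion theory applied to the reflected process $X-I$ away from $0$, and then transfer the Poisson structure from the excursions of $X-I$ to the excursions of $\rho$.

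First, recall from Section \ref{subsec:levy} that $0$ is regular for $X-I$ and that $-I$ is the local time of $X-I$ at $0$ under $\P^\psi$. By It\^o's excursion theory for the strong Markov process $X-I$, the excursions of $X-I$ away from $0$ indexed by the local time $-I$ form, under $\P^\psi$, a Poisson point process on $[0,+\infty)\times\cE$ with intensity $d\ell\otimes\N^\psi(d\mu)$, where $\cE$ denotes the space of excursions and $\N^\psi$ is, by its very definition, the associated excursion measure. Next, observe that
\[
T_x=\inf\{t\geq 0;\,I_t\leq -x\}=\inf\{t\geq 0;\,-I_t\geq x\}
\]
is the right-continuous inverse of the local time $-I$ evaluated at $x$. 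Therefore the set of excursions of $X-I$ that start before time $T_x$ is in bijection with the points of the above Poisson process whose local-time coordinate lies in $[0,x]$, and hence forms a Poisson point process (on $\cE$) with intensity $x\,\N^\psi(d\mu)$.

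Second, one transfers this statement from $X-I$ to $\rho$. By Remark \ref{rm:rho-L}, under $\P^\psi$ (where $\rho_0=0$) one has $\rho_t=0$ if and only if $X_t-I_t=0$, so the excursion intervals $(\tilde\alpha_i,\tilde\beta_i)_{i\in\tilde I}$ of $\rho$ away from $0$ before $T_x$ coincide with the excursion intervals of $X-I$ away from $0$ before $T_x$. Moreover, the definition \reff{eq:def_rho} of $\rho$ is local: on each such interval the exploration process $\tilde\rho^{(i)}$ is a deterministic measurable functional of the corresponding excursion $(X_{(\tilde\alpha_i+s)\wedge\tilde\beta_i}-I_{\tilde\alpha_i},\,s\geq 0)$ of $X-I$, since in \reff{eq:def_rho} the quantities $I^s_t$ and $H_s$ for $s,t$ inside an excursion interval only involve values of $X$ on that interval. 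Pushing forward the Poisson point process by this measurable map yields a Poisson point process on $\cD$, and by the very definition of $\N^\psi$ as the law of the exploration process under the excursion measure of $X-I$, its intensity is $x\,\N^\psi(d\mu)$.

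The main conceptual obstacle is the last point: checking that the excursions of $\rho$ are genuinely measurable functionals of the excursions of $X-I$ and that the resulting push-forward intensity on $\cD$ is exactly $\N^\psi$ as defined in Section \ref{subsec:levy}. This is essentially a bookkeeping verification using \reff{eq:def_rho} together with the fact that $I^s_t=I_{\tilde\alpha_i}+\inf_{r\in[s,t]}(X_r-I_{\tilde\alpha_i})$ whenever $s,t$ lie in the same excursion interval $(\tilde\alpha_i,\tilde\beta_i)$, so that $\tilde\rho^{(i)}$ only sees the excursion straddling $\tilde\alpha_i$. Everything else is standard excursion theory and the identification of $T_x$ as the inverse local time, as already used in the proof of the Ray--Knight Proposition \ref{prop:RK}.
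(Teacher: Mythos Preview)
Your argument is correct and is precisely the standard route: It\^o excursion theory for $X-I$ with local time $-I$, the identification of $T_x$ as the inverse local time at level $x$, and the measurable push-forward from excursions of $X-I$ to excursions of $\rho$ via \reff{eq:def_rho}. The paper does not give its own proof here; it simply records the lemma as a consequence of Lemma~4.2.4 in \cite{dlg:rtlpsbp}, which packages exactly this excursion-theoretic decomposition. So your proposal is essentially the same approach, made explicit rather than cited.
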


To better understand the links between the L\'{e}vy CRT and the
exploration process, we can combine the Markov property with the other
Poisson decomposition of \cite{dlgrtlpsbp}, Lemma 4.2.4. Informally
speaking, the measure $\rho_t$ is a measure placed on the ancestral
line of the individual labelled $t$ which describes how the sub-trees
``on the right'' of $t$ (i.e., containing individuals $s\ge t$) are
grafted along that ancestral line. More precisely, if we
denote~$(\ct_i)_{i\in\ci}$ the family of these subtrees and we set $h_i$ the
height where the subtree $\ct_i$ branches from the ancestral line of
$t$, then the family $(h_i,\ci_i)_{i\in\ci}$ given $\rho_t$ is
distributed as the atoms of a Poisson measure with intensity
$\rho_t(dh)\N^\psi[d\ct]$ (see Figure \ref{fig:descrip-rho}).

%
\begin{figure}

\includegraphics{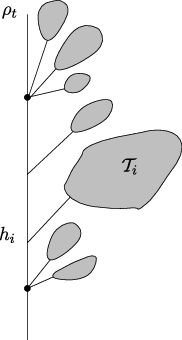}

\caption{The measure $\rho_t$ and the family $(h_i,\ci_i)_{i\in\ci
}$.}\label{fig:descrip-rho}
\end{figure}

As the measure $\N^\psi$ is an infinite measure, we see that the
branching points along the ancestral line of $t$ are of two types (see
\cite{dlgpfalt}, Theorem 4.6):
\begin{itemize}
\item binary nodes (i.e., vertex of degree 3) which are given by the
regular part of $\rho_t$,
\item infinite nodes (i.e., vertex of infinite degree) which are given
by the atomic part of $\rho_t$.
\end{itemize}
By the definition of $\rho_t$, we see that these infinite nodes are
associated with the jumps of the L\'{e}vy process $X$. If such a node
corresponds to a jump time~$s$ of $X$, we call $\Delta X_s$ the size
of the node.

\subsection{The dual process and representation formula}
\label{sec:dual}

We shall need the  $\cm_f(\R_+)$-valued process $\eta=(\eta_t,t\ge0)$
defined by
%
%
\begin{equation}
\label{eq:eta}
\eta_t(dr)=\mathop{\mathop{\sum}_{{0<s\le t}}}_{X_{s-}<I_t^s}(X_s-I_t^s)\delta
_{H_s}(dr)+\beta\ind_{[0,H_t]}(r)\,dr.
\end{equation}
The process $\eta$ is the dual process of $\rho$ under $\N^\psi$ (see
Corollary 3.1.6 in \cite{dlgrtlpsbp}). It represents how the trees
``on the left'' of $t$ branch along the ancestral line of $t$.


We recall the Poisson representation of $(\rho,\eta)$ under $\N^\psi
$. Let
$\mathcal{N}(dx\, d\ell\, du)$ be a Poisson point measure on
$[0,+\infty)^3$ with intensity
\[
dx \ell\pi(d\ell)\ind_{[0,1]}(u)\,du.
\]
For every $a>0$, let us denote by $\mathbb{M}_a^\psi$ the law of the pair
$(\mu_a,\nu_a)$ of measures on $\R_+$ with finite mass defined by
the following: for any $f\in\cb_+(\R_+)$
%
\begin{eqnarray}
\label{def:mu_a}
\langle\mu_a,f\rangle
&=& \int\mathcal{N}(dx\, d\ell\,
du)\ind_{[0,a]}(x)u\ell f(x)+\beta\int_0^af(x)\, dx,\\
\label{def:nu_a}
\langle\nu_a,f\rangle
&=& \int\mathcal{N}(dx\, d\ell\, du)\ind
_{[0,a]}(x)\ell(1-u)f(x)+\beta\int_0^af(x)\, dx.
\end{eqnarray}
\begin{rem}
\label{rem:W}
In particular $\mu_a(dr)+\nu_a(dr)$ is defined as $\ind_{[0,a]}(r)
d_r W_r$, where $W$ is a subordinator with Laplace exponent
$\psi'-\alpha$ where $\alpha=\psi'(0)$ is defined by \reff{eq:def_alpha}.
\end{rem}

We finally set $\mathbb{M}^\psi=\int_0^{+\infty}\,da  \mathrm
{e}^{-\alpha a}\mathbb{M}_a^\psi$.

\begin{prop}[(\cite{dlgrtlpsbp},
Proposition 3.1.3)]
\label{prop:poisson_representation1}
For every nonnegative measurable function $F$ on $\cm_f(\R_+)^2$,
\[
\N^\psi \biggl[\int_0^\sigma F(\rho_t, \eta_t) \, dt
 \biggr]=\int\mathbb{M}^\psi(d\mu\,
d\nu)F (\mu, \nu) ,
\]
where $\sigma=\inf\{s>0; \rho_s=0\}$ denotes the length of the
excursion.
\end{prop}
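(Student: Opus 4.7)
The strategy is to read off the atomic structure of $(\rho_t, \eta_t)$ from the jumps of $X$, then use the occupation time formula together with excursion theory for the reflected L\'evy process to match the right-hand side.

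From \reff{eq:def_rho} and \reff{eq:eta}, the pair $(\rho_t, \eta_t)$ decomposes as a common Brownian component $\beta \ind_{[0, H_t]}(r)\, dr$ plus atomic parts supported on the heights $H_s$ of the \emph{ancestral jumps} of $t$, namely those $s \le t$ with $X_{s-} < I_t^s$. For each such $s$, writing $\ell = \Delta_s$, $x = H_s$ and $u = (I_t^s - X_{s-})/\ell \in (0,1)$, one sees that $\rho_t$ carries mass $u\ell$ and $\eta_t$ carries mass $(1-u)\ell$ at the point $x$. Comparing with \reff{def:mu_a}--\reff{def:nu_a}, the proposition reduces to showing that, averaging over $t$ against $\N^\psi \otimes dt$, the random family of triples $(x, \ell, u)$ indexed by the ancestral jumps of $t$, together with the height $a = H_t$, forms a Poisson point measure of intensity $da\, e^{-\alpha a}\, dx\, \ell\pi(d\ell) \ind_{\{x \le a\}} \ind_{[0,1]}(u)\, du$.

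\medskip

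\emph{Step 1: occupation time formula and the prefactor $e^{-\alpha a}$.} By Proposition \ref{prop:LT} one has $\int_0^\sigma g(H_t)\, dt = \int_0^\infty g(a) Z_a\, da$, so it is enough to describe the conditional law of the atomic parts of $(\rho_t, \eta_t)$ given $H_t = a$, weighted by $\N^\psi[dZ_a]$. The Ray-Knight theorem (Proposition \ref{prop:RK}) combined with the Poisson decomposition of Lemma \ref{lem:poisson_repr} gives $\N^\psi[Z_a] = e^{-\alpha a}$, since under $\rP_x^\psi$ the expectation $\rE_x^\psi[Z_a] = x e^{-\alpha a}$ is the mean of a $\psi$-CB at time $a$ starting from $x$. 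This yields exactly the exponential prefactor in the definition of $\mathbb{M}^\psi$.

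\medskip

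\emph{Step 2: the Poisson structure of ancestral jumps.} It remains to show that, conditionally on $H_t = a$, the ancestral-jump triples $(H_s, \Delta_s, U_s^t)$, with $U_s^t = (I_t^s - X_{s-})/\Delta_s$, form a Poisson point measure on $[0,a] \times (0,\infty) \times (0,1)$ of intensity $dx \otimes \ell\pi(d\ell) \otimes \ind_{[0,1]}(u)\, du$. The natural route is the time-reversal identity of Section \ref{sec:PLRT}: since $(X_r, 0 \le r \le t)$ has the same law as $(\hat X^{(t)}_r, 0 \le r \le t)$, the ancestral jumps of $t$ for $X$ correspond to the record jumps of $\hat X^{(t)}$, i.e.\ those $r$ with $\hat X^{(t)}_r = \hat S^{(t)}_r$. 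Standard fluctuation theory applied to $\hat S^{(t)} - \hat X^{(t)}$ then yields a Poisson structure for these record jumps; combined with the identification of $H$ as the local time of the reflected process, and with the subordinator structure recalled in Remark \ref{rem:W} (Laplace exponent $\psi' - \alpha$, Lévy measure $\ell\pi(d\ell)$ on jump sizes), one recovers the factors $dx$ and $\ell\pi(d\ell)$. The independent uniform factor $\ind_{[0,1]}(u)\, du$ comes from the strong Markov property at the jump time $s$: conditionally on $\Delta_s = \ell$, as $t$ ranges over the descendants of $s$, the value of $I_t^s$ is uniformly distributed in $[X_{s-}, X_s]$, so $U_s^t$ is uniform on $(0,1)$ independently of $(H_s, \Delta_s)$.

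\medskip

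\emph{Main obstacle.} The delicate point is the rigorous simultaneous justification of the two assertions in Step 2 under the $\N^\psi \otimes dt$ average: that the record jumps of $\hat X^{(t)}$ together with the split fraction $u$ form a Poisson point measure with exactly the claimed intensity. This requires combining the Itô excursion theory for $\hat S^{(t)} - \hat X^{(t)}$ with the description of $H$ via the local time at $0$ of this reflected process, and a careful Fubini argument to interchange the averaging over $t$ with the Poissonian structure in the jump variables. Once this is in hand, the Campbell/master formula for Poisson measures delivers the claimed equality of measures on $\cm_f(\R_+)^2$.
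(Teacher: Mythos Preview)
The paper does not give its own proof of this proposition: it is quoted verbatim from Duquesne--Le~Gall \cite{dlg:rtlpsbp}, Proposition 3.1.3, and used as a black box. So there is no proof in the paper to compare your attempt against; the relevant comparison is with the original argument in \cite{dlg:rtlpsbp}.

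Your outline has the right ingredients --- time-reversal identifying ancestral jumps with record jumps of $\hat X^{(t)}$, excursion theory for $S-X$, and the ladder-height subordinator of Remark~\ref{rem:W} --- and this is indeed the skeleton of the Duquesne--Le~Gall proof. However, your two-step decomposition is somewhat misleading. In Step~1 you invoke the occupation time formula to isolate the factor $e^{-\alpha a}$, but that formula applies only to integrands of the form $g(H_t)$; it does not by itself give a disintegration of $\N^\psi\otimes dt$ with respect to $H_t$ for general $F(\rho_t,\eta_t)$. What you are really asserting is that, under $\N^\psi\otimes dt$, the variable $H_t$ has density $e^{-\alpha a}\,da$ \emph{and} is independent of the atomic structure of $(\rho_t,\eta_t)$ once rescaled to $[0,a]$. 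The first part is indeed the computation $\N^\psi[Z_a]=e^{-\alpha a}$, but the second part is exactly the content of Step~2, so Step~1 is not a genuine reduction.

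In the actual proof, one does not separate these two steps. One works under $\P^\psi$ at a fixed time $t$, applies time-reversal so that $H_t$ becomes the local time $\hat L^{(t)}_t$ at $0$ of $\hat S^{(t)}-\hat X^{(t)}$, and then reads off \emph{both} the Poisson structure of the record jumps \emph{and} the exponential weight $e^{-\alpha a}$ simultaneously from It\^o's excursion theory for $S-X$: the $e^{-\alpha a}$ is the probability that the ladder-height subordinator (Laplace exponent $\psi'-\alpha$, cf.\ Remark~\ref{rem:W}) has not been killed by level $a$, and the atoms $(x,\ell,u)$ are the jumps of this subordinator together with an independent uniform split. The transfer from $\P^\psi$ to $\N^\psi$ then goes through the excursion decomposition of $X-I$. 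Your ``main obstacle'' paragraph correctly identifies where the work lies; the point is that once you set up the excursion theory for $S-X$ properly, the intensity $da\,e^{-\alpha a}\,dx\,\ell\pi(d\ell)\,\ind_{[0,1]}(u)\,du$ drops out in one piece rather than in two.
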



\section{Super-critical L\'{e}vy continuum random tree}\label{sec:superCRT}
We shall construct a~L\'{e}vy CRT with super-critical branching mechanism using a Girsanov
formula.\vadjust{\goodbreak}

Let $\tilde\psi$ be a (sub)critical branching mechanism. The process
\mbox{$Z=(Z_a, a\geq0)$}, where $Z_a=L_{T_x}^a$, is a CB with branching
mechanism $\tilde\psi$. We have $\P^{\tilde\psi}_x
$-a.s. $Z_\infty=\lim_{a\rightarrow+\infty} Z_a=0$. We shall call $x$
the initial mass of the $\tilde\psi$-CRT under~$\P_x^{\tilde\psi}$.
Formula \reff{eq:defs2} readily
implies the following Girsanov's formula: for any nonnegative measurable
function $F$, and $q\geq0$,
%
%
\begin{equation}
\label{eq:Expsi}
\E^{\tilde\psi}_x [M_\infty^{\tilde\psi,q} F(\rho)  ]
=\E^{\tilde\psi_q} _x [F(\rho)  ],
\end{equation}
where $M^{\tilde\psi,q}_{\infty} $ is given by \reff{eq:Minfini}.

We will use a similar formula (with $q<0$) to define the exploration
process for a super-critical L\'{e}vy CRT with branching mechanism
$\psi$. Because super-critical branching process may have an infinite
mass, we shall cut it at a~given level to construct the corresponding
genealogical continuum random tree (see
\cite{dhpscsbp} when $\pi=0$).

For $a\geq0$, let $\cm^a_f=\cm_f([0,a])$ be the set of nonnegative
measures on $[0,a]$, and let $\cd^a$ be the set of c\`{a}d-l\`{a}g
$\cm^a_f$-valued process defined on $[0,{+\infty} )$ endowed with the
Skorohod topology. We now define a projection from $\cd$ to $\cd^a$.
For $\rho=(\rho_t, t\geq0)\in\cd$, we consider the time spent below
level $a$ up to time $t$: $\Gamma_{\rho,a}(t)=\int_0^t
\ind_{\{H(\rho_s)\leq a\}} \, ds$ and its right continuous inverse
%
%
\begin{equation}
\label{eq:def-C}
 \quad C_{\rho,a}(t)=\inf\{ r\geq0;
\Gamma_{\rho,a}(r)>t\}=\inf\biggl\{ r\geq0;
\int_0^r \ind_{\{H(\rho_s)\leq a\}} \, ds >t\biggr\},
\end{equation}
with the convention that $\inf\varnothing={+\infty} $. We
define the projector $\pi_a$ from $\cd$ to~$\cd^a$ by
%
%
\begin{equation}
\label{eq:piar}
\pi_{a}(\rho) =\bigl(\rho_{C_{\rho,a}(t)}, t\geq0\bigr),
\end{equation}
with the convention $\rho_{+\infty}=0$. By construction we have the
following compatibility relation: $\pi_a \circ\pi_b =\pi_a$ for
$0\leq
a\leq b$.

Let $\psi$ be a super-critical branching mechanism which we suppose to
be conservative, that is, \reff{eq:conservatif} holds. Recall $q^*$ is the
unique (positive) root of $\psi'(q)=0$. In particular the branching
mechanism $\psi_q$ is critical if $q=q^*$ and sub-critical if $q> q^*$.

We consider the filtration $\ch=(\ch_a, a\geq0)$ where $\ch_a$ is the
$\sigma$-field generated by the c\`{a}d-l\`{a}g process $\pi_a(\rho
)$ and the
class of $\P^{ \psi_{{q^*}}}_x$ negligible sets. Thanks to the second
statement of
Proposition \ref{prop:LT}, we get that $Z$ is $\ch$-adapted. Furthermore
the proof of Theorem 1.4.1 in \cite{dlgrtlpsbp} yields that $Z$ is
a~Markov process w.r.t. the filtration~$\ch$. In particular the process
$M^{\psi_{{q^*}},-{q^*}}$ defined by \reff{eq:defMq} is thanks to
Theorem~\ref{theo:mart} a $\ch$-martingale under $\P_x^{\psi_{q^*}}$.

Let $q\geq q^*$. We define the distribution $ \P_x^{\psi,a}$ (resp.,
$\N^{\psi, a}$) of the
$\psi$-CRT cut at level $a$ with initial mass $x$, as the
distribution of $\pi_a(\rho)$ under $M^{\psi_q,-q}_a
d\P^{\psi_q}_x$ [resp., $ \mathrm{e}^{q Z_a +\psi(q)\int_0^a Z_r\,
dr}\,d\N^{\psi_q}$]: for any
measurable nonnegative function $F$,
%
\begin{eqnarray}
\label{eq:defEpsia}
\E^{\psi,a} _x [F(\rho)  ]
&=& \E^{\psi_q} _x [M^{{\psi_q},-q}_a F(\pi_a(\rho))  ],
\\
\label{eq:defNpsia}
\N^{\psi,a}  [F(\rho)  ]
&=& \N^{\psi_q}  \bigl[\mathrm{e}^{q Z_a +\psi(q)\int_0^a Z_r\, dr}
F(\pi_a(\rho))
 \bigr].
\end{eqnarray}

\begin{lem}
\label{lem:Pyqq}
The distributions $\P^{\psi,a}_x$ and $\N^{ \psi,a }$ do not depend
on the choice of $q\geq q^*$.
\end{lem}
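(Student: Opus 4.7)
Plan. Fix $q^*\leq q<q'$ in $[q^*,+\infty)$. The independence on $q$ of both $\P^{\psi,a}_x$ and $\N^{\psi,a}$ will follow from combining a single pointwise algebraic identity
\[
M^{\psi_q,q'-q}_a\cdot M^{\psi_{q'},-q'}_a=M^{\psi_q,-q}_a
\]
with the appropriate Girsanov-type formula and a splitting of $\sigma$ at level $a$. The identity is checked directly from \eqref{eq:defMq} using $\psi_q(q'-q)=\psi(q')-\psi(q)$, $\psi_{q'}(-q')=-\psi(q')$ and $\psi_q(-q)=-\psi(q)$: the exponents of $x$ telescope to $-qx$, those of $Z_a$ to $qZ_a$, and those of $\int_0^a Z_r\,dr$ simplify to $\psi(q)\int_0^a Z_r\,dr$.

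For $\N^{\psi,a}$, I would apply Lemma \ref{lem:X_pruned} with $\psi$ replaced by $\psi_q$ and shift $\theta=q'-q$, which gives $\N^{\psi_{q'}}[G]=\N^{\psi_q}[\expp{-(\psi(q')-\psi(q))\sigma}G]$. Substituting $G=\expp{q'Z_a+\psi(q')\int_0^a Z_r\,dr}F(\pi_a(\rho))$ and splitting $\sigma=\int_0^a Z_r\,dr+\int_a^\infty Z_r\,dr$ reduces the computation to evaluating the conditional expectation $\N^{\psi_q}[\expp{-(\psi(q')-\psi(q))\int_a^\infty Z_r\,dr}\mid\pi_a(\rho)]$. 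By the special Markov property of the $\psi_q$-CRT at level $a$, given $\pi_a(\rho)$ the process $(Z_{a+r})_{r\geq 0}$ is a $\psi_q$-CB starting from $Z_a$, so \eqref{eq:IZinfini} gives $\expp{-Z_a\psi_q^{-1}(\psi(q')-\psi(q))}$. Since $\psi_q$ is nondecreasing on $[0,+\infty)$ for $q\geq q^*$ (as $\psi_q'(0)=\psi'(q)\geq 0$), we have $\psi_q^{-1}(\psi(q')-\psi(q))=q'-q$, so the conditional expectation equals $\expp{-(q'-q)Z_a}$. Collecting the factors via the algebraic identity above yields $\N^{\psi_{q'}}[\cdots]=\N^{\psi_q}[\expp{qZ_a+\psi(q)\int_0^a Z_r\,dr}F(\pi_a(\rho))]$, proving $q$-independence for $\N^{\psi,a}$.

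For $\P^{\psi,a}_x$, Theorem \ref{theo:mart}(ii) applied to $\psi_q$ with parameter $q'-q\geq 0$ gives $d\P_x^{\psi_{q'}}|_{\cf_a}/d\P_x^{\psi_q}|_{\cf_a}=M^{\psi_q,q'-q}_a$ on the $Z$-filtration $\cf_a$. A direct plug-in of $G=M^{\psi_{q'},-q'}_aF(\pi_a(\rho))$ together with the algebraic identity would finish the argument, but $F(\pi_a(\rho))$ is only $\ch_a$-measurable. The cleanest route is to deduce the $\P^{\psi,a}_x$-statement from the $\N^{\psi,a}$-statement via the Poisson representation of Lemma \ref{lem:poisson_repr}: under $\P^{\psi_q}_x$ the excursions $(\tilde\rho^{(i)})_{i\in\tilde I}$ of $\rho$ form a Poisson point measure with intensity $x\N^{\psi_q}$; the density factors excursion-wise as $M^{\psi_q,-q}_a=\expp{-qx}\prod_i\expp{qZ^{(i)}_a+\psi(q)\int_0^a Z^{(i)}_r\,dr}$; and $\pi_a(\rho)$ is the concatenation of the truncated excursions $\pi_a(\tilde\rho^{(i)})$. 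A Campbell-type computation then expresses $\E^{\psi_q}_x[M^{\psi_q,-q}_aF(\pi_a(\rho))]$ as a Poisson functional on $\cd^a$ whose effective per-excursion measure is exactly $\N^{\psi,a}$, so $q$-independence for $\P^{\psi,a}_x$ follows from the previous paragraph. The main obstacle throughout is the special Markov property of the CRT at level $a$ under $\N^{\psi_q}$, underpinning both the conditional Laplace computation and the excursion decomposition; it is standard in the Duquesne--Le Gall framework (Section 4 of \cite{dlg:rtlpsbp}) but must be invoked with care.
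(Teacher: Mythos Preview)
Your proof is correct, but it takes a somewhat longer route than the paper's.

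The paper proceeds in the opposite order: it first handles $\P^{\psi,a}_x$ directly, then deduces the $\N^{\psi,a}$ case by excursion theory. The key observation you are missing is that the paper has already established, just before the lemma, that $Z$ is Markov with respect to the filtration $\ch=(\ch_a)$ generated by $\pi_a(\rho)$, so $M^{\psi_{q^*},q-q^*}$ is in fact an $\ch$-martingale, not merely an $\cf$-martingale. Combined with the Girsanov formula \eqref{eq:Expsi} (which is stated for arbitrary functionals of $\rho$, with density $M_\infty^{\tilde\psi,q}$), this dissolves your measurability concern entirely: one writes
\[
\E_x^{\psi_q}\bigl[M_a^{\psi_q,-q}F(\pi_a(\rho))\bigr]
=\E_x^{\psi_{q^*}}\bigl[M_\infty^{\psi_{q^*},q-q^*}\,M_a^{\psi_q,-q}F(\pi_a(\rho))\bigr]
=\E_x^{\psi_{q^*}}\bigl[M_a^{\psi_{q^*},q-q^*}\,M_a^{\psi_q,-q}F(\pi_a(\rho))\bigr],
\]
the second equality using that the bracketed integrand is $\ch_a$-measurable and $M^{\psi_{q^*},q-q^*}$ is an $\ch$-martingale; then your algebraic identity finishes. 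No special Markov property at level $a$, no Poisson reconstruction.

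Your argument is not wrong: the conditional Laplace computation $\N^{\psi_q}[\expp{-(\psi(q')-\psi(q))\int_a^\infty Z_r\,dr}\mid\pi_a(\rho)]=\expp{-(q'-q)Z_a}$ is precisely the content of the $\ch$-martingale property of $M^{\psi_q,q'-q}$, just unpacked by hand. And deducing the $\P_x$ case from the $\N$ case via Lemma~\ref{lem:poisson_repr} is a legitimate alternative to the paper's reverse deduction. But since the paper has already recorded that $M$ is an $\ch$-martingale, using it directly is considerably shorter.
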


\begin{pf}
Let $q>q^*$. For any nonnegative measurable
function $F$, we have
\[
\E_x^{\psi_{{q}}} [M_a^{\psi_{{q}},-{q}}F(\pi_a(\rho)) ]
= \E_x^{\psi_{{q}}}\bigl [\mathrm{e}^{-{q}x+{q}Z_a+\psi({q})\int
_0^aZ_s\, ds}F(\pi_a(\rho)) \bigr].
\]
As $\psi_{{q}}=(\psi_{{q^*}})_{{q}-{q^*}}$, we apply Girsanov's formula
\reff{eq:Expsi} and the fact that $M^{\psi_{{q^*}},{q}-{q^*}}$ is a
martingale to get\vspace*{-1pt}
\begin{eqnarray*}
&&\hspace*{-3pt}\E_x^{\psi_{{q}}}
 [M_a^{\psi_{{q}},-{q}}F(\pi_a(\rho)) ]\\[-1pt]
&&\hspace*{-3pt} \qquad = \E_x^{\psi_{{q^*}}} \bigl[M_a^{\psi_{{q^*}},{q}-{q^*}}
\mathrm{e}^{-{q}x+{q}Z_a+\psi({q})\int_0^aZ_s\, ds}F(\pi_a(\rho
)) \bigr]\\
&&\hspace*{-3pt} \qquad = \E_x^{\psi_{{q^*}}} \bigl[\mathrm{e}^{({q}-{q^*}) x
-({q}-{q^*})Z_a-\psi_{{q^*}}({q}-{q^*})\int_0^a Z_s\,ds}\mathrm
{e}^{-{q}x+{q}Z_a+\psi({q})\int_0^aZ_s\, ds}F(\pi_a(\rho)) \bigr]\\
&&\hspace*{-3pt} \qquad = \E_x^{\psi_{{q^*}}}\bigl [\mathrm{e}^{-{q^*}x+{q^*} Z_a-(\psi
({q})-\psi({q^*}))\int_0^aZ_s\, ds}\mathrm{e}^{\psi({q})\int
_0^aZ_s\, ds}F(\pi_a(\rho)) \bigr]\\
&&\hspace*{-3pt} \qquad = \E_x^{\psi_{{q^*}}} [M_a^{\psi_{{q^*}},-{q^*}}F(\pi_a(\rho
)) ].
\end{eqnarray*}
Excursion theory then gives the result for the excursion measures.
\end{pf}

Let $\cw$ be the set of $\cd$-valued processes endowed with the
$\sigma$-field generated by the coordinate applications.

\begin{prop}
\label{prop:Pcomp}
Let
$(\rho^a, a\geq0)$ be the canonical process on $\cw$. There exists a
probability measure $\Pb_x^\psi$ (resp., an excursion measure
$\Nb^\psi$) on $\cw$, such that, for every $a\ge0$, the distribution
of $\rho^a$ under $\Pb_x^\psi$ (resp., $\Nb^\psi$) is~$\P^{\psi
,a} _x$
(resp., $\N^{\psi,a} $) and such that, for $0\leq a\leq b$
%
%
\begin{equation}
\label{eq:PiH}
\pi_{a} (\rho^b )=\rho^a\qquad\mbox{$\Pb_x^\psi$-a.s. (resp.,
$\Nb^\psi$-a.e.)}.
\end{equation}
\end{prop}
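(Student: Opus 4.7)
The plan is to realise $\bar\P_x^\psi$ and $\bar\N^\psi$ as projective limits of the marginals $(\P^{\psi,a}_x)_{a\ge 0}$ (resp.\ $(\N^{\psi,a})_{a\ge 0}$) along the projection system $\{\pi_a\}$. The only nontrivial input is the compatibility
\begin{equation*}
(\pi_a)_*\P^{\psi,b}_x=\P^{\psi,a}_x\quad\text{and}\quad (\pi_a)_*\N^{\psi,b}=\N^{\psi,a},\qquad 0\le a\le b;
\end{equation*}
given this, Kolmogorov's extension theorem applied to the Polish spaces $\cd^a$ (with the Skorohod topology) produces a measure on $\cw$ with the desired one-dimensional marginals, and \reff{eq:PiH} holds for free because the $(a,b)$-marginal of the projective limit is built as the joint law of $\bigl(\pi_a(\rho),\pi_b(\rho)\bigr)$, which deterministically satisfies $\pi_a(\rho^b)=\rho^a$ thanks to $\pi_a\circ\pi_b=\pi_a$.

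To prove the compatibility, I fix $q\ge q^*$ and a bounded measurable $F$ on $\cd^a$. Using \reff{eq:defEpsia} together with $\pi_a\circ\pi_b=\pi_a$,
\begin{equation*}
\E^{\psi,b}_x\bigl[F(\pi_a(\rho))\bigr]
=\E^{\psi_q}_x\bigl[M^{\psi_q,-q}_b\,F(\pi_a(\rho))\bigr].
\end{equation*}
The variable $F(\pi_a(\rho))$ is $\ch_a$-measurable, and by Theorem \ref{theo:mart} (exactly as already exploited in the proof of Lemma \ref{lem:Pyqq}) the process $M^{\psi_q,-q}$ is an $\ch$-martingale under $\P^{\psi_q}_x$. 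Conditioning on $\ch_a$ therefore replaces $M^{\psi_q,-q}_b$ by $M^{\psi_q,-q}_a$, turning the right-hand side into $\E^{\psi,a}_x[F(\rho)]$. The identical manipulation performed under $\N^{\psi_q}$ and using \reff{eq:defNpsia} yields the analogous compatibility for the excursion measures.

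For $\bar\P_x^\psi$ the construction is now a direct application of Kolmogorov. For $\bar\N^\psi$ the measure $\N^{\psi_q}$ is only $\sigma$-finite, so Kolmogorov must be invoked after localisation: for every $\varepsilon>0$, one has $\N^{\psi_q}\bigl[\sup_s H(\rho_s)>\varepsilon\bigr]<+\infty$ (a standard property of the height process), and the event $\bigl\{\sup_s H(\rho^a_s)>\varepsilon\bigr\}$ is stable under $\pi_a$ whenever $a\ge\varepsilon$. Normalising the restricted compatible family $\N^{\psi,a}\bigl(\,\cdot\cap\{\sup_s H(\rho^a_s)>1/n\}\bigr)$ (for $a\ge 1/n$) into a probability measure, applying Kolmogorov's theorem to each, and assembling the pieces as $n\uparrow\infty$ produces the $\sigma$-finite $\bar\N^\psi$.

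The genuine content of the proof is the one-line martingale computation in the second paragraph; the rest is extension-theorem machinery. The only step that requires any real care is the $\sigma$-finite extension for $\bar\N^\psi$, and this is precisely why one truncates to excursions of height at least $\varepsilon$ before taking the projective limit.
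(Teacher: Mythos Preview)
Your proof is correct and follows essentially the same route as the paper: verify the compatibility $(\pi_a)_*\P^{\psi,b}_x=\P^{\psi,a}_x$ via $\pi_a\circ\pi_b=\pi_a$ together with the $\ch$-martingale property of $M^{\psi_q,-q}$, then invoke a projective-limit construction. The paper's argument is identical (it works with $q=q^*$ rather than a generic $q\ge q^*$, which is immaterial given Lemma~\ref{lem:Pyqq}), and simply writes ``The result is similar for the excursion measure'' where you spell out the $\sigma$-finite localisation via $\{\sup_s H(\rho_s)>\varepsilon\}$.
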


\begin{pf}
To prove the existence of such a projective limit, it is enough to
check the compatibility relation between $\P_x^{\psi,b}$ and
$\P_x^{\psi,a}$ for every $b\ge a\ge0$.

Let $0\leq a\leq b$. We get
\begin{eqnarray*}
\E^{\psi,b} _x [F(\pi_a(\rho))  ]
&=& \E^{\psi_{{q^*}}} _x\bigl [M^{\psi_{{q^*}},-{q^*}}_b F(\pi_a\circ
\pi_b(\rho))  \bigr]\\
&=& \E^{\psi_{{q^*}}} _x [M^{\psi_{{q^*}},-{q^*}}_b F(\pi_a(\rho
))  ]\\
&=& \E^{\psi_{{q^*}}} _x [M^{\psi_{{q^*}},-{q^*}}_a F(\pi_a(\rho
))  ]\\
&=& \E^{\psi,a} _x [F(\rho)  ],
\end{eqnarray*}
where we used the compatibility relation of the projectors for the
second equality and the fact that $M^{\psi_{{q^*}},-{q^*}}$ is a $\ch
$-martingale
for the third equality. We deduce that $\P^{\psi,b} _x \circ
\pi_a=\P^{\psi,a} _x$.

This compatibility relation implies the existence of a projective limit
$\Pb^\psi_x$. The result is similar for the excursion measure.
\end{pf}

Let us remark that the definitions of $\Pb_x^\psi$ and $\bar\N^\psi$
are also valid for a~(sub)cri\-tical branching mechanism $\psi$, with the convention
$q^*=0$. In particular, we get the following corollary.
\begin{cor}
\label{cor:Pb=P}
If $\psi$ is (sub)critical, then the law of the process
$(\pi_a(\rho),\allowbreak a\ge0)$ under $\P_x^\psi$ (resp., $\N^\psi$) is
$\Pb_x^\psi$ (resp., $\bar\N^\psi$).
\end{cor}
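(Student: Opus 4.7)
The plan is to observe that in the (sub)critical case, the convention $q^*=0$ makes the martingale density appearing in \eqref{eq:defEpsia} and \eqref{eq:defNpsia} trivial. Specifically, with $q^*=0$ we have $\psi_{q^*}=\psi$ and
\[
M^{\psi_{q^*},-q^*}_a = \expp{0\cdot x - 0\cdot Z_a - \psi(0)\int_0^a Z_s\, ds} = 1,
\]
since $\psi(0)=0$. Similarly the weight $\expp{q Z_a + \psi(q)\int_0^a Z_r\, dr}$ appearing in \eqref{eq:defNpsia} reduces to $1$ when $q=0$.

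Substituting this into the definitions yields, for every non-negative measurable $F$ and every $a\geq 0$,
\[
\E^{\psi,a}_x[F(\rho)] = \E^{\psi}_x[F(\pi_a(\rho))]
\quad\text{and}\quad
\N^{\psi,a}[F(\rho)] = \N^{\psi}[F(\pi_a(\rho))].
\]
This identifies the one-dimensional marginal of $\rho^a$ under $\bar\P_x^\psi$ (resp.\ $\bar\N^\psi$) with the push-forward of $\P_x^\psi$ (resp.\ $\N^\psi$) by the projector $\pi_a$.

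To upgrade this marginal identification to an identification of processes, I would invoke the compatibility relation $\pi_a \circ \pi_b = \pi_a$ for $0\leq a\leq b$ noted right after \eqref{eq:piar}: the process $(\pi_a(\rho), a\geq 0)$ defined directly under $\P_x^\psi$ already satisfies $\pi_a(\pi_b(\rho))=\pi_a(\rho)$, so it is a consistent family in the same sense as the projective family built in the proof of Proposition \ref{prop:Pcomp}. Since $\bar\P_x^\psi$ was characterized in that proposition as the unique projective limit whose $a$-marginal is $\P^{\psi,a}_x$ and which satisfies \eqref{eq:PiH}, the law of $(\pi_a(\rho), a\geq 0)$ under $\P_x^\psi$ must coincide with $\bar\P_x^\psi$. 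The same argument applies verbatim under $\N^\psi$ to obtain the excursion-measure statement.

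The only step that requires a little care is the last one: one must make sure that the $\sigma$-field on $\cw$ generated by the coordinate maps is indeed determined by the compatible finite-dimensional marginals together with the almost sure relation \eqref{eq:PiH}. This is automatic from the construction of $\bar\P_x^\psi$ as a projective limit, so no new calculation is needed. I do not anticipate a genuine obstacle here; the content of the corollary is essentially that the Girsanov re-weighting of \eqref{eq:defEpsia}--\eqref{eq:defNpsia} is trivial when $\psi$ is already (sub)critical, which is transparent once one unfolds the convention $q^*=0$.
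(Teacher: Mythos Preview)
Your proposal is correct and follows exactly the reasoning the paper intends: the corollary is stated immediately after the remark that the definitions of $\Pb_x^\psi$ and $\bar\N^\psi$ remain valid in the (sub)critical case with the convention $q^*=0$, and your unfolding of that convention (trivial Girsanov weight, hence $\P_x^{\psi,a}$ is the push-forward of $\P_x^\psi$ by $\pi_a$, then projective-limit uniqueness via the compatibility $\pi_a\circ\pi_b=\pi_a$) is precisely what the paper leaves implicit. There is nothing to add.
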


By construction the local time at level $a$ of
$\rho^b$ for $b\geq a$ does not depend on $b$, we denote by
$Z_a$ its value. Property (ii) of Theorem \ref{theo:mart} implies that
$Z=(Z_a,  a\geq0)$ is under $\Pb^\psi_x$ a CB with branching mechanism
$\psi$. Hence, the probability measure $\Pb_x^\psi$ can be seen as the
law of the exploration process that codes the super-critical CRT
associated with $\psi$.

We get the following direct
consequence of Properties (i) and (ii) of Lemma \ref{lem:propZ} and of the
theory of excursion measures.
\begin{cor}
\label{cor:Pbpsiq}
Let $q> 0$ such that $\psi(q)\geq0$. Then, the probability
measure $\Pb_x^{\psi_q}$ is absolutely continuous with respect to
$\Pb_x^\psi$ with
\[
\frac{d\Pb_x^{\psi_q}}{d\Pb_x^\psi}=
M^{\psi,q}_\infty=\mathrm{e}^{qx-\psi(q)\sigma}\ind_{\{\sigma
<+\infty
\}}.
\]
The measure
$\Nb^{\psi_q}$ is absolutely continuous with respect to $\Nb^\psi$
with
\[
\frac{d\Nb^{\psi_q}}{d\Nb^\psi}=\mathrm{e}^{-\psi(q) \sigma
}\ind_{\{\sigma<+\infty\}}.
\]
\end{cor}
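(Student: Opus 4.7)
The plan is to establish the corollary by first reading the level-$a$ Radon--Nikodym identity off the very definitions \eqref{eq:defEpsia}--\eqref{eq:defNpsia}, and then passing to the limit $a\to+\infty$ via Doob martingale convergence, exactly as one extends Lemma \ref{lem:X_pruned} to the super-critical regime.

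First, by convexity of $\psi$ and $\psi(0)=0$, the hypothesis $q>0$, $\psi(q)\ge 0$ forces $q\ge q_0>q^*$. In particular $\psi_q$ is (sub)critical and conservative, so Corollary \ref{cor:Pb=P} identifies $\Pb_x^{\psi_q}$ (resp.\ $\Nb^{\psi_q}$) with the law of $(\pi_a(\rho))_{a\ge 0}$ under $\P_x^{\psi_q}$ (resp.\ $\N^{\psi_q}$), and in the definitions \eqref{eq:defEpsia}--\eqref{eq:defNpsia} the choice $q'=q$ is admissible.

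Next, I would exploit the algebraic identity $\psi_q(-q)=\psi(0)-\psi(q)=-\psi(q)$, which yields
\[
M_a^{\psi_q,-q}\,M_a^{\psi,q}
=\expp{-\bigl(\psi_q(-q)+\psi(q)\bigr)\int_0^a Z_s\,ds}=1.
\]
Plugging $M_a^{\psi,q}F$ into \eqref{eq:defEpsia} with $q'=q$ then gives, for every non-negative $\ch_a$-measurable $F$,
\[
\bE_x^\psi\bigl[M_a^{\psi,q}\,F\bigr]
=\E_x^{\psi_q}\bigl[M_a^{\psi_q,-q}M_a^{\psi,q}\,F\circ\pi_a\bigr]
=\E_x^{\psi_q}\bigl[F\circ\pi_a\bigr]
=\bE_x^{\psi_q}[F],
\]
i.e.\ $\Pb_x^{\psi_q}=M_a^{\psi,q}\cdot\Pb_x^\psi$ on $\ch_a$. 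The same manipulation applied to \eqref{eq:defNpsia} produces the level-$a$ excursion density $\expp{-qZ_a-\psi(q)\int_0^a Z_s\,ds}$ of $\Nb^{\psi_q}$ with respect to $\Nb^\psi$ on $\ch_a$.

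Finally, I would let $a\to+\infty$. The consistency of the level-$a$ identities as $a$ varies automatically turns $(M_a^{\psi,q},a\ge 0)$ into a non-negative $\ch$-martingale under $\Pb_x^\psi$, bounded by $\expp{qx}$ thanks to $\psi(q)\ge0$ and $Z_a\ge 0$. Doob convergence produces an a.s.\ and $L^1$ limit, which Lemma \ref{lem:propZ}(i) identifies as $\expp{qx-\psi(q)\sigma}\ind_{\{\sigma<+\infty\}}$: on $\{\sigma<+\infty\}=\{Z_\infty=0\}$ we have $Z_a\to 0$ and $\int_0^a Z_s\,ds\to\sigma$, while on $\{\sigma=+\infty\}$ the exponent tends to $-\infty$. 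A standard Doob argument then transports the level-$a$ identity to the terminal $\sigma$-field $\ch_\infty=\bigvee_a\ch_a$, which by construction of $\cw$ coincides with its full Borel $\sigma$-field. This gives the probability-measure statement, and the excursion-measure statement follows by the same steps, restricting to bounded cylindrical test functionals to handle the mere $\sigma$-finiteness of $\Nb^\psi$.

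I expect the main subtle point to be justifying the $\ch$-martingale property of $M_a^{\psi,q}$ under $\Pb_x^\psi$, since Theorem \ref{theo:mart}(i) only furnishes it under the CB law $\rP_x^\psi$, whereas $\ch_a$ contains more information than the $\sigma$-field generated by $Z$ alone; but the consistency of the level-$a$ densities as $a$ varies provides the martingale property directly, bypassing this issue.
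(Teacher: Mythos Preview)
Your proof is correct and follows essentially the same approach as the paper, which simply records the corollary as a ``direct consequence of Properties (i) and (ii) of Lemma~\ref{lem:propZ} and of the theory of excursion measures'' without further details. Your version makes explicit the two points the paper leaves implicit: the algebraic cancellation $M_a^{\psi_q,-q}M_a^{\psi,q}=1$ that turns the defining identity \eqref{eq:defEpsia} into the level-$a$ Radon--Nikodym relation, and the fact that the $\ch$-martingale property of $M_a^{\psi,q}$ under $\Pb_x^\psi$ (rather than merely under the CB law $\rP_x^\psi$) follows from the consistency of these level-$a$ identities, after which bounded martingale convergence and Lemma~\ref{lem:propZ}(i) finish the job exactly as you describe.
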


If the total mass of $Z$, $\sigma=\int_0^{+\infty} Z_a\, da$, is
finite, then $\rho^a$ is the projection of a well-defined
exploration process.
\begin{lem}
\label{lem:sfini}
On $\{\sigma<{+\infty} \}$, there exists
$\rho^{\infty}\in\cd$ such that $\rho^a=\pi^a(\rho^\infty)$ for
all $a\geq0$, $\Pb^\psi_x$-a.s. or $ \Nb^\psi$-a.e.
\end{lem}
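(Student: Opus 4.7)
The plan is to transfer the lemma from the (sub)critical regime, where it holds essentially by construction, to the super-critical one via absolute continuity on the event $\{\sigma<\infty\}$. Let $q_0>0$ be the largest root of $\psi=0$; since $q_0>q^*$, one has $\psi'(q_0)>0$, so that $\psi_{q_0}$ is (sub)critical and conservative. Applying Corollary \ref{cor:Pbpsiq} with $q=q_0$ and using $\psi(q_0)=0$ yields
\[
\frac{d\Pb_x^{\psi_{q_0}}}{d\Pb_x^\psi} = \expp{q_0 x}\ind_{\{\sigma<\infty\}}
\quad\text{and}\quad
\frac{d\Nb^{\psi_{q_0}}}{d\Nb^\psi} = \ind_{\{\sigma<\infty\}},
\]
so that on $\{\sigma<\infty\}$ the measures $\Pb_x^\psi$ and $\Pb_x^{\psi_{q_0}}$ (resp. $\Nb^\psi$ and $\Nb^{\psi_{q_0}}$) are mutually absolutely continuous with a bounded, non-vanishing density.

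Next, by Corollary \ref{cor:Pb=P}, the measure $\Pb_x^{\psi_{q_0}}$ (resp. $\Nb^{\psi_{q_0}}$) coincides with the push-forward of $\P_x^{\psi_{q_0}}$ (resp. $\N^{\psi_{q_0}}$) under the measurable map $\rho\mapsto (\pi_a(\rho),a\geq 0)$ from $\cd$ to $\cw$. Hence $\Pb_x^{\psi_{q_0}}$-a.s. (resp. $\Nb^{\psi_{q_0}}$-a.e.) the canonical process $(\rho^a,a\geq 0)$ is itself of the form $(\pi_a(\rho^\infty),a\geq 0)$ for some $\rho^\infty\in\cd$. By the mutual absolute continuity above, this same property holds $\Pb_x^\psi$-a.s.\ on $\{\sigma<\infty\}$ (resp. $\Nb^\psi$-a.e.\ on $\{\sigma<\infty\}$), which is the statement of the lemma.

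The only point requiring care is measurability: one must check that the event in question is a measurable subset of $\cw$, and that $\rho^\infty$ may be chosen as a measurable function of $(\rho^a,a\geq 0)$. I expect this to be the main (and still mild) obstacle. It can be settled by an explicit construction on $\{\sigma<\infty\}$: under $\Pb_x^{\psi_{q_0}}$ the (sub)critical CB $Z$ satisfies $Z_a\to 0$ a.s.\ as $a\to\infty$, so there exists a.s.\ a finite random level $a^*$ above which $\rho^b$ carries no mass; the compatibility relation \reff{eq:PiH} then forces $\rho^b=\rho^{a^*}$ (as finite measures on $\R_+$) for every $b\geq a^*$, and one may set $\rho^\infty:=\rho^{a^*}$, viewed as an element of $\cd$. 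This measurable selection, valid $\Pb_x^{\psi_{q_0}}$-a.s., carries over to $\Pb_x^\psi$ and $\Nb^\psi$ on $\{\sigma<\infty\}$ thanks to the absolute continuity established in the first step.
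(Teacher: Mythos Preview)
Your overall strategy---establish the statement in the (sub)critical case and then transfer it to the super-critical one via the absolute continuity of Corollary~\ref{cor:Pbpsiq}---is exactly the paper's. The difference lies in how you produce $\rho^\infty$ in the (sub)critical regime. The paper builds it as a pointwise limit: the occupation time formula (Proposition~\ref{prop:LT}) shows that $\Gamma_{\rho,a}(t)\uparrow t$, hence $C_{\rho,a}(t)\downarrow t$, and right-continuity of $\rho$ gives $\pi_a(\rho)_t\to\rho_t$ for every $t$; thus $\rho^\infty_t:=\lim_{a\to\infty}\rho^a_t$ is a measurable construction on $\cw$ that works under no further assumption. You instead invoke the push-forward description from Corollary~\ref{cor:Pb=P} and then, for the measurable selection, rely on the existence of a finite level $a^*$ above which the tree is empty.

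This last step has a genuine gap: from $Z_a\to 0$ you cannot conclude that $Z_a=0$ eventually. That requires extinction in finite time, i.e.\ Grey's condition $\int^{+\infty}\!dv/\psi_{q_0}(v)<\infty$, which is \emph{not} assumed in general (and fails for instance for the branching mechanism of example~(iii) in Section~\ref{sec:propbm}, where $\psi(u)\sim u\log u$ at infinity so the tree has infinite height while $\sigma<\infty$). In that situation there is no finite $a^*$ and your selection $\rho^\infty:=\rho^{a^*}$ is undefined. The fix is precisely the paper's limit construction, which provides the measurable selection you need without any height assumption; once you have that, your absolute-continuity transfer with $q=q_0$ goes through unchanged.
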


\begin{pf}
It is enough to get the result under $\Pb^\psi_x$.

First we assume that $\psi$ is (sub)critical.
Proposition \ref{prop:LT} implies that  \mbox{$\int_0^t \ind_{\{H(\rho
_s)\leq
a \}}\, ds$} increases to $t$ as $a$ goes to infinity. Using
\reff{eq:def-C}, \reff{eq:piar} and the right continuity of $\rho$, we
deduce that $\P^\psi_x$-a.s. for all $t\geq0$, \mbox{$
\lim_{a\rightarrow+\infty} \pi^a (\rho)_t=\rho_t$}.

Thanks to Corollary \ref{cor:Pb=P}, we deduce that
$\Pb^\psi_x$-a.s. for all $t\geq0$, $\rho^\infty
_t= \lim_{a\rightarrow+\infty} \pi^a
(\rho)_t$ exists and that $\pi_a(\rho^\infty)=\rho^a$.

The case $\psi$ super-critical is then a consequence of Corollary
\ref{cor:Pbpsiq}.\vadjust{\goodbreak}
\end{pf}

Without confusion, we shall always write $\P^\psi$ instead of $\Pb
^\psi$ and $\N^\psi$
instead of $\Nb^\psi$ and call them the law or the excursion measure
of the
exploration process of the CRT, whether $\psi$ is super-critical or
(sub)critical. And we shall write $\rho$ for the
projective limit $(\rho^a, a\geq0)$ on $\cw$, and make the
identification $\rho=\rho^\infty\in\cd$ when the latter exists,
that is,
when $\sigma$ defined by \reff{eq:defs2} is finite.

Recall $\psi^{-1}$ is given by \reff{eq:def-psi-1}. We now extend
formula \reff{eq:N_s} for general branching mechanism.
\begin{lem}
\label{lem:Nexps}
Let $\sigma$ be given by \reff{eq:defs2}. We have, for $\lambda\geq0$,
\[
\E^{\psi}_x [\mathrm{e}^{-\lambda\sigma} ]=\exp{
(- x
\N^{\psi} [1-
\mathrm{e}^{-\lambda\sigma}] )}=\mathrm{e}^{-x \psi
^{-1}(\lambda)}.
\]
\end{lem}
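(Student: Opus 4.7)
My plan is to split the lemma into two equalities and reduce each to results already available in the excerpt. The outer equality $\E^\psi_x[\exp(-\lambda\sigma)] = \exp(-x\psi^{-1}(\lambda))$ is exactly equation \reff{eq:IZinfini} of Lemma \ref{lem:propZ}(i), valid for any conservative branching mechanism; for $\lambda>0$ the integrand vanishes on $\{\sigma=\infty\}$, so there is no obstruction coming from explosion. The substantive part is therefore the middle equality $\E^\psi_x[\exp(-\lambda\sigma)] = \exp(-x\N^\psi[1-\exp(-\lambda\sigma)])$, which together with the outer one yields $\N^\psi[1-\exp(-\lambda\sigma)] = \psi^{-1}(\lambda)$, extending \reff{eq:N_s} from the (sub)critical case to arbitrary conservative $\psi$.

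For (sub)critical $\psi$ the middle equality is immediate: by Lemma \ref{lem:poisson_repr} the excursions $(\tilde\rho^{(i)})_{i\in\tilde I}$ of $\rho$ under $\P^\psi_x$ form a Poisson point measure with intensity $x\N^\psi$, and since $\sigma=\sum_{i\in\tilde I}\sigma(\tilde\rho^{(i)})$ is an additive functional, Campbell's exponential formula applies directly.

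For super-critical $\psi$, I would promote this Poisson description to $\Pb^\psi_x$ via the projective limit of Proposition \ref{prop:Pcomp}. Fix $a\ge 0$ and $q\ge q^*$; by \reff{eq:defEpsia}, $\P^{\psi,a}_x$ is the law $\P^{\psi_q}_x\circ\pi_a^{-1}$ tilted by the density $M^{\psi_q,-q}_a$. Because $Z_a$ and $\sigma_a=\int_0^a Z_r\,dr$ are both additive over the excursions of $\rho$, this density factorizes,
\[
M^{\psi_q,-q}_a=\exp(-qx)\prod_{i\in\tilde I}\exp\bigl(qZ_a^{(i)}+\psi(q)\sigma_a^{(i)}\bigr),
\]
so that the standard exponential change of measure for Poisson point processes transforms the excursion intensity $x\N^{\psi_q}\circ\pi_a^{-1}$ under $\P^{\psi_q}_x\circ\pi_a^{-1}$ into $x\N^{\psi,a}$ under $\P^{\psi,a}_x$ (matching \reff{eq:defNpsia}); the prefactor $\exp(-qx)$ supplies exactly the normalization forced by the martingale identity $\E^{\psi_q}_x[M^{\psi_q,-q}_a]=1$. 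Projective compatibility then transfers this to a Poisson description, under $\Pb^\psi_x$, of the excursions of the projective limit $\rho$ with intensity $x\Nb^\psi$, and applying Campbell's formula to the additive functional $\sigma$ yields the middle equality.

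The main obstacle is the rigorous handling of the exponential tilting when $\Nb^\psi$ charges $\{\sigma=\infty\}$, as happens in the super-critical case. I would handle this by working first at finite level $a$, where everything reduces to (sub)critical excursion theory for $\psi_q$ and the exponential formula is unproblematic, and then passing to the projective limit using $\{\sigma=\infty\}=\bigcap_a\{Z_a>0\}$ together with monotone convergence. Alternatively, one can derive the super-critical case by applying Corollary \ref{cor:Pbpsiq} with $q=q_0$ to translate all expectations to the sub-critical mechanism $\psi_{q_0}$, at the cost of accounting separately for the mass $\Nb^\psi[\sigma=\infty]$, which the two routes identify as $q_0$, consistent with the decomposition $\psi^{-1}(\lambda)=q_0+\psi_{q_0}^{-1}(\lambda)$.
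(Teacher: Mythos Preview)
Your proposal is correct and follows essentially the same route as the paper. Both arguments work at a fixed finite level $a$, invoke the Poisson representation of excursions under the (sub)critical measure $\P^{\psi_q}_x$ (Lemma \ref{lem:poisson_repr}), factorize the Girsanov density $M^{\psi_q,-q}_a$ over excursions so that the tilted excursion intensity is identified with $x\N^{\psi,a}$ via \reff{eq:defNpsia}, use the martingale identity $\E^{\psi_q}_x[M^{\psi_q,-q}_a]=1$ for the normalizing prefactor, and then let $a\to\infty$; the paper simply writes out explicitly the exponential-formula computation that you summarize as ``exponential change of measure for Poisson point processes,'' and closes with \reff{eq:IZinfini} for the second equality just as you do.
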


\begin{pf}
Let $q\geq q^*$. We have
\begin{eqnarray*}
\E^{\psi}_x [\mathrm{e}^{-\lambda\int_0^a Z_r\, dr } ]
&=& \E^{\psi_{q}}_x [M^{\psi_{{q}},-{q}}_a \mathrm{e}^{-\lambda
\int_0^a Z_r\, dr } ] \\
&=& \mathrm{e}^{-qx} \E^{\psi_{q}}_x \bigl[\mathrm{e}^{qZ_a + (\psi
(q) -\lambda) \int_0^a Z_r\, dr } \bigr] \\
&=& \mathrm{e}^{-qx} \mathrm{e}^{-x \N^{\psi_{q}} [1-\mathrm
{e}^{qZ_a + (\psi (q) -\lambda) \int_0^a Z_r\, dr } ] }\\
&=& \mathrm{e}^{-qx} \mathrm{e}^{-x \N^{\psi_{q}} [1-\mathrm
{e}^{qZ_a + \psi(q) \int_0^a Z_r\, dr } ] }\\
&&{}\times \mathrm{e}^{-x\N
^{\psi_{q}} [\mathrm{e}^{qZ_a + \psi(q) \int_0^a Z_r\, dr }
 (1- \mathrm{e}^{-\lambda \int_0^a Z_r\, dr } )  ]}
\\
&=& \E^{\psi_{q}}_x [M^{\psi_{{q}},-{q}}_a  ]
\mathrm{e}^{-x\N^{\psi} [1- \mathrm{e}^{-\lambda \int_0^a
Z_r\, dr }  ]} \\
&=& \mathrm{e}^{-x\N^{\psi} [1- \mathrm{e}^{-\lambda \int_0^a
Z_r\, dr }  ]} ,
\end{eqnarray*}
where we used \reff{eq:defEpsia} for the first equality, \reff{eq:defMq}
for the second, Lemma \ref{lem:poisson_repr} for the third,
\reff{eq:defNpsia} for the fifth and (1) of Theorem \ref{theo:mart} for
the last. We then let~$a$ goes to infinity to get the first equality of
the lemma, and use \reff{eq:IZinfini} to get the second.
\end{pf}


\section{Pruning}\label{sec:pruning}

We keep notations from Section \ref{sec:crt}. Recall that $\cd$ is
the set of
c\`{a}d-l\`{a}g $\cm_f(\R_+)$-valued process, and $\cw$ is the set of
$\cd$-valued processes. Let $R=(\rho^\theta, \theta\geq0)$ be
the canonical process on $\cw$.

Let $\psi$ be a (sub)critical branching
mechanism. The pruning procedure developed in \cite{aspsf} when
$\pi=0$, \cite{adfalp} when $\beta=0$ and in \cite{advplcrt} or
\cite{vdmfglt} for the general case, yields a probability measure on
$\cw$, $\tilde\P^\psi_x$, such that $R$ is Markov
and the law $\rho^\theta$ under $\tilde\P^\psi_x$ is $\P^{\psi
_\theta}_x$ for
all $\theta\geq0$. Furthermore $\rho^\theta$ codes for a sub-tree
of~$\rho^{\theta'}$ if $\theta\geq\theta'$. We recall the
construction of
$\tilde\P^\psi_x$ in Section \ref{sec:prune-const}.

\subsection{Pruning of (sub)critical CRT}
\label{sec:prune-const}
The main idea of the pruning procedure of a tree coded by an exploration
$\rho$ is to put marks on a~leaf $t$ (or a~branch labeled by $t$) and
more precisely on the measure $\rho_t$. There are two types of marks:
the first ones only lay on the \textit{nodes} of the tree whereas the
other ones lay on the \textit{skeleton} of the tree; each mark
appears at a random time. At time $\theta$, we remove all the
vertex of the initial tree that contains a~mark on their lineage.
In terms of exploration processes, we get $\rho^\theta$ by
a~time change of the process $\rho$ that skips all the times $t$
representing individuals that received a mark on their lineage by time
$\theta$. We explain more precisely the pruning procedure.\vspace*{-3pt}

\subsubsection{Marks on the nodes}\label{sec:Mnod}

Let $(X_t, t\geq0) $ be the L\'{e}vy process with branching mechanism
$\psi$ and let $\rho$ be the corresponding exploration
process. Recall $(\Delta_s, s\in \cj)$ denotes the set of the sizes
of jumps of $X$. Conditionally on $X$, we
consider a family
\[
(T_s,s\in\cj)
\]
of independent exponential random variables with respective
parameter
$\Delta_s.$
We define the $\cm(\R_+^2)$-valued process
$M^{(\mathrm{nod})}=(M^{(\mathrm{nod})}_t,t\ge
0)$ by
\[
M_t^{(\mathrm{nod})}(dr,dv)=\mathop{\mathop{\sum}_{{0<s\le
t}}}_{X_{s-}<I_t^s}\delta_{T_s}(dv)\delta_{H_s}(dr).
\]

For fixed $\theta\ge0$, we will consider the $\cm(\R_+)$-valued
process $M_t^{(\mathrm{nod})}(dr,[0,\theta])$ whose atoms give the marked nodes: each
node of infinite degree is marked independently from the others with probability
$1-\mathrm{e}^{-\theta\Delta_s}$, where $\Delta_s$ is the mass
(i.e., the height
of the jump) associated with the node.\vspace*{-3pt}

\begin{rem}
Although different from the measure process that defines the marks on
the nodes in \cite{adfalp} [formula (12)], this construction gives
the same marks (see  Introduction  of \cite{adfalp}).\vspace*{-3pt}
\end{rem}

\begin{rem}
The time parameter introduced here allows us to construct a coherent
family of marks. Indeed, for $\theta'>\theta$, the atoms of
$M_t^{(\mathrm{nod})}(dr,\allowbreak[0,\theta])$ are still atoms of
$M_t^{(\mathrm{nod})}(dr,[0,\theta'])$. In other words, there are more and more
marked nodes as $\theta$ increases, which allows us to construct a
``decreasing'' tree-valued process in Section \ref{sec:prunedp}.\vspace*{-3pt}
\end{rem}

\subsubsection{Marks on the skeleton}\label{sec:Mske}

Let $M^{(\mathrm{ske})}=(M_t^{(\mathrm{ske})},t\ge0)$ be a L\'{e}vy~snake with lifetime
$H$ and
spatial motion a Poisson point process with intensity
\[
2\beta\ind_{\{u>0\}}\,du.
\]
(See \cite{dlgrtlpsbp} for the definition of a L\'{e}vy snake and
\cite{advplcrt} for the extension to a~discontinuous height process
$H$; see also \cite{vdmfglt}.)

In other words, $M^{(\mathrm{ske})}$ is a $\cm(\R_+^2)$-valued process such that,
conditionally on the exploration process $\rho$:
\begin{itemize}
\item for every $t\ge0$, $M_t^{(\mathrm{ske})}(dr,du)$ is a Poisson point
measure with
intensity
\[
2\beta\ind_{[0,H_t]}(r)\,dr \ind_{\{u>0\}}\,du;\vadjust{\goodbreak}
\]
\item for every $0\le t\le t'$, with $
H_{t,t'}:=\inf_{s\in[t,t']}H_s$, then:
\begin{itemize}
\item the measures $M_t^{(\mathrm{ske})}(dr,du)\ind_{r\in[0,H_{t,t'}]}$ and
$M_{t'}^{(\mathrm{ske})}(dr,du)\ind_{r\in[0,H_{t,t'}]}$ are  equal;
\item the random measures $M_t^{(\mathrm{ske})}(dr,du)\ind_{r\in
[H_{t,t'},H_t]}$ and
$M_{t'}^{(\mathrm{ske})}(dr,du)\times \ind_{r\in[H_{t,t'},H_{t'}]}$ are independent.
\end{itemize}
\end{itemize}

\subsubsection{Definition of the pruned processes}
\label{sec:prunedp}
We define the mark process as
%
%
\begin{equation}
\label{eq:mark-process}
M^{(\mathrm{mark})}=M^{(\mathrm{nod})}+M^{(\mathrm{ske})}.
\end{equation}
The process $((\rho_t, M_t^{(\mathrm{mark})}), t\geq0)$ is called the marked
exploration process. It is Markovian (see \cite{vdmfglt} for its
properties). We
denote by $\hat\P^\psi_x$ its law and by~$\hat\N^\psi$ the
corresponding excursion measure.

For every $\theta>0$ and $t>0$, we set
\[
m_t^{(\theta)}=M_t^{(\mathrm{mark})}
 ([0,H_t]\times[0,\theta] ).
\]
The random variable $m_t^{(\theta)}$ is the number of marks at time
$\theta$ that lay on the
lineage of the individual labeled by $t$. We will only consider the
individuals without
marks on their lineage. Therefore, we set
%
%
\begin{equation}\label{eq:Ctheta}
A^{(\theta)}_t=\int_0^t\ind_{\{m_s^{(\theta)}=0\}}\,ds\quad\mbox
{and}\quad
C^{(\theta)}_t=\inf\bigl\{r\geq0; A^{(\theta)}_r\geq t\bigr\},
\end{equation}
its right-continuous inverse. Finally, we define
$\rho^\theta=(\rho^\theta_t, t\geq0)$,
$M^{(\mathrm{mark}),\theta}=(M^{(\mathrm{mark}),\theta}_t, t\geq0)$ by
\begin{eqnarray*}
\rho^\theta_t
&=& \rho_{C^{(\theta)}_t},\\
M^{(\mathrm{mark}),\theta}_t([0,h]\times
[0,q])
&=& M^{(\mathrm{mark})}_{C^{(\theta)}_t}\bigl([0,h]\times(\theta, q+\theta]\bigr).
\end{eqnarray*}

We shall use in Section \ref{sec:infinite_tree} the pruning operator
$\Lambda_\theta$ defined on the marked
exploration process by
%
%
\begin{equation}
\label{eq:notTq}
\Lambda_\theta\bigl(\rho, M^{(\mathrm{mark})}\bigr)=\bigl(\rho^\theta, M^{(\mathrm{mark}), \theta}\bigr).
\end{equation}

Using the lack of memory of the
exponential random variables and of properties of Poisson point measure,
it is easy to get
\begin{lem}
\label{lem:R-Markov}
The process $R=(\rho^\theta, \theta\geq0)$ is Markov.
\end{lem}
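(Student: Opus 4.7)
The plan is to establish the Markov property by showing that, given $\rho^\theta$, the future $(\rho^{\theta+s}, s\geq 0)$ can be reconstructed by applying the pruning procedure to $\rho^\theta$ using an independent family of fresh marks whose law is completely determined by $\rho^\theta$ itself. Thus no additional information from $(\rho^{\theta'}, \theta'\leq \theta)$ is needed.

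The first step is to identify the correct \emph{state} carried forward. Writing $\Lambda_\theta$ for the operator of \reff{eq:notTq}, the definitions in Section \ref{sec:prunedp} yield the semigroup identity $\Lambda_{\theta+s}=\Lambda_s\circ\Lambda_\theta$ on the marked exploration process, so that $\rho^{\theta+s}$ is the first coordinate of $\Lambda_s(\rho^\theta, M^{(mark),\theta})$. The task therefore reduces to showing that, conditionally on $\rho^\theta$, the shifted mark process $M^{(mark),\theta}$ is distributed as a freshly drawn mark process on $\rho^\theta$ (i.e.\ as the construction of Sections \ref{sec:Mnod} and \ref{sec:Mske} applied to $\rho^\theta$), and is independent of the $\sigma$-field $\cg_\theta=\sigma(\rho^{\theta'}, \theta'\leq \theta)$ modulo $\rho^\theta$.

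For the \emph{node marks} I would use the lack of memory of the exponential law: a node corresponding to a jump of size $\Delta_s$ of the underlying L\'evy process survives in $\rho^\theta$ precisely on $\{T_s>\theta\}$, and on this event $T_s-\theta$ is again exponential with parameter $\Delta_s$, jointly independent as $s$ varies over $\cj$ and independent of the collection $\{\ind_{\{T_s\leq\theta\}}\}_s$, which encodes the pre-$\theta$ erasures. For the \emph{skeleton marks} I would invoke the snake property: conditionally on $\rho$, $M^{(ske)}(dr,du)$ is a Poisson measure along the tree with intensity $2\beta\,dr\,\ind_{\{u>0\}}du$, so by independent increments in $u$ the restriction to $u>\theta$ (which, after shifting by $\theta$ and restricting to the surviving branches coded by $\rho^\theta$, yields $M^{(ske),\theta}$) is again a Poisson measure with intensity $2\beta\,dr\,\ind_{\{u>0\}}du$ on the skeleton of $\rho^\theta$, independent of the restriction to $u\in[0,\theta]$.

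Combining these two independence statements and the fact that $\rho^\theta$ together with the restrictions of $M^{(nod)}$, $M^{(ske)}$ to $v\in[0,\theta]$, $u\in[0,\theta]$ generates $\cg_\theta$, one obtains that $(\rho^\theta, M^{(mark),\theta})$ has, conditionally on $\rho^\theta$, the law of an initial marked exploration process built from $\rho^\theta$, and is conditionally independent of $\cg_\theta$. The Markov property of $R$ then follows from the measurability of $\Lambda_\bullet$. The main technical obstacle is carrying out the independence argument for the skeleton marks rigorously through the time change $C^{(\theta)}$: one must verify that the portion of the snake with parameter $u\leq\theta$ carries no residual information about the marks on the surviving skeleton beyond what is already contained in $\rho^\theta$, which requires a careful use of the conditional Poisson structure of the snake along branches that are shared between $\rho$ and $\rho^\theta$.
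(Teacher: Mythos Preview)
Your proposal is correct and follows precisely the route the paper indicates: the paper does not give a detailed proof but simply states, just before the lemma, that it is ``easy to get'' using ``the lack of memory of the exponential random variables and of properties of Poisson point measure''. Your argument is a careful unpacking of exactly these two ingredients (memorylessness for the node marks $T_s$, and the Poisson structure in the $u$-variable for the skeleton marks), together with the semigroup relation $\Lambda_{\theta+s}=\Lambda_s\circ\Lambda_\theta$, so there is no substantive difference in approach.
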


The $\cw$-valued process $R$ codes for a decreasing family of
CRT, which we shall call a $\psi$-family of pruned CRT.
A direct application of Theorem 1.1 of~\cite{advplcrt} gives the
marginal distribution.

\begin{prop}
\label{prop:Lrq}
The marked exploration process $(\rho^\theta, M^{(\mathrm{mark}), \theta})$
under~$\P_x^\psi$ (resp., $\N^\psi$) is distributed as $(\rho, M^{(\mathrm{mark})})$
under $\P^{\psi_\theta}_x$ (resp., $ \N^{\psi_\theta}$).\vadjust{\goodbreak}
\end{prop}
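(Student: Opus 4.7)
The plan is to reduce this statement to Theorem 1.1 of \cite{adv:plcrt}, which already handles the first coordinate, and then to identify the remaining marks using the memoryless property of exponential random variables and the restriction properties of Poisson point measures. Throughout, the key observations are that the L\'evy measure of $\psi_\theta$ is $\pi^{(\theta)}(d\ell)=\expp{-\theta\ell}\pi(d\ell)$ (so pruning acts on jumps by thinning with probability $1-\expp{-\theta\ell}$), while the Brownian coefficient $\beta$ is unchanged.

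First, I would invoke Theorem 1.1 of \cite{adv:plcrt} to obtain the marginal: under $\P_x^\psi$ (respectively $\N^\psi$), the process $\rho^\theta$ defined via the time change \reff{eq:Ctheta} is distributed as the exploration process under $\P_x^{\psi_\theta}$ (respectively $\N^{\psi_\theta}$). This fixes the law of the first coordinate; it only remains to check that the residual mark process $M^{(mark),\theta}$ has, conditionally on $\rho^\theta$, the correct distribution, namely that of $M^{(mark)}$ built from $\rho^\theta$ as in \reff{eq:mark-process}.

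Next, I would separate the two contributions. For the node marks, by construction the jumps of $X$ whose instant $s$ survives the time change $C^{(\theta)}$ are exactly those with $T_s>\theta$; the $s$-th jump size $\Delta_s$ is preserved under the time change, and conditionally on $\{T_s>\theta\}$ the variable $T_s-\theta$ is again exponential with parameter $\Delta_s$ by the lack of memory. Moreover, the family of surviving jumps, viewed through $\rho^\theta$, coincides with the jumps of the L\'evy process underlying $\rho^\theta$, whose intensity is indeed $\pi^{(\theta)}$; hence the shifted exponential clocks $(T_s-\theta)$ form exactly the node-mark family of Section \ref{sec:Mnod} attached to $\rho^\theta$. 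For the skeleton marks, conditionally on $\rho$, $M^{(ske)}(dr,du)$ is, on every branch $[0,H_t]$, a Poisson point measure of intensity $2\beta\,\ind_{[0,H_t]}(r)dr\,\ind_{\{u>0\}}du$; by the restriction property of Poisson point measures, the image of $M^{(ske)}\bigl(dr,(\theta,+\infty)\bigr)$ under the shift $u\mapsto u-\theta$ is again, conditionally on the surviving skeleton, Poisson with intensity $2\beta\,dr\,\ind_{\{u>0\}}du$, matching the construction of Section \ref{sec:Mske} for $\psi_\theta$ (which has the same $\beta$).

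The main obstacle is not a computational one but a bookkeeping one: one must check that, jointly, the surviving node marks (indexed by the jumps of the pruned process) and the surviving skeleton marks (indexed by the branches of the pruned process) are conditionally independent given $\rho^\theta$, so that together they constitute a genuine marked exploration process for $\psi_\theta$. This independence is inherited from the original construction of $M^{(mark)}$ (in which node clocks and skeleton Poisson measures are independent given $\rho$) combined with the fact that the time change $C^{(\theta)}$ is measurable with respect to the mark process itself. Once this is established, the equality of laws for $(\rho^\theta,M^{(mark),\theta})$ and $(\rho,M^{(mark)})$ under $\P_x^{\psi_\theta}$ follows. The excursion-measure statement is obtained by the usual excursion-theoretic transfer, working under $\hat\N^\psi$ instead of $\hat\P_x^\psi$.
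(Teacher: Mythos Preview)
Your proposal is correct and follows essentially the same route as the paper. The paper's proof is a one-line citation (``A direct application of Theorem~1.1 of \cite{adv:plcrt}''), and the memoryless/Poisson-restriction argument you spell out for the residual marks is precisely the mechanism the paper alludes to just before Lemma~\ref{lem:R-Markov} (``Using the lack of memory of the exponential random variables and of properties of Poisson point measure\ldots''); you have simply made explicit what the paper leaves to the reader.
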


We shall now concentrate on the process $R$.
Let $\tilde\P^\psi_x$ be the law of $R$, and~$\tilde\N^\psi$ be
the
corresponding excursion measure.

We deduce the following compatibility relation from the Markov property
of $R$ and Proposition \ref{prop:Lrq}.
\begin{cor}
\label{cor:compR}
Let $\theta_0\geq0$. The law under $\tilde\P^\psi_x$ (resp.,
$\tilde
\N^\psi$) of the process $(\rho^{\theta_0+\theta},
\theta\geq0)$ is
$\tilde\P^{\psi_{\theta_0}}_x$ (resp., $\tilde\N^{\psi_\theta}$).
\end{cor}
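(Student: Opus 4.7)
My plan is to deduce the corollary from two ingredients: the marginal identification of Proposition \ref{prop:Lrq} applied to the \emph{marked} exploration process, and a semigroup property of the pruning operator $\Lambda_\theta$ defined in \reff{eq:notTq}. The upshot is that $(\rho^{\theta_0+\theta},\theta\ge 0)$ can be constructed from $(\rho^{\theta_0},M^{(mark),\theta_0})$ by exactly the same recipe used to build $R$ from $(\rho,M^{(mark)})$, and this second marked process has, by Proposition \ref{prop:Lrq}, the law of $(\rho,M^{(mark)})$ under $\hat\P^{\psi_{\theta_0}}_x$ (resp. $\hat\N^{\psi_{\theta_0}}$).

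First I would establish the composition identity
\[
\Lambda_{\theta_0+\theta}(\rho,M^{(mark)})_{1} \;=\; \Lambda_\theta\bigl(\Lambda_{\theta_0}(\rho,M^{(mark)})\bigr)_{1},
\]
where the subscript $1$ denotes the first (exploration process) coordinate. The point is that a time $t$ contributes to $\rho^{\theta_0+\theta}$ iff the lineage of $t$ carries no mark of $M^{(mark)}$ up to level $\theta_0+\theta$, which splits into (a) no mark up to level $\theta_0$, i.e.\ $t$ survives into $A^{(\theta_0)}$, and (b) the image of the remaining lineage in $\rho^{\theta_0}$ carries no mark of the shifted process $M^{(mark),\theta_0}$ up to level $\theta$. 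By the very definition of $M^{(mark),\theta_0}$ after the time change $C^{(\theta_0)}$, condition (b) is precisely the condition defining the $\theta$-pruning of $(\rho^{\theta_0},M^{(mark),\theta_0})$. Chaining the time changes $A^{(\theta_0)}$ and the analogue associated with $M^{(mark),\theta_0}$ and $\theta$ then yields exactly $A^{(\theta_0+\theta)}$, and passing to the right-continuous inverses gives the claimed identity. Iterating this for every $\theta\ge 0$, the entire family $(\rho^{\theta_0+\theta},\theta\ge 0)$ is a deterministic functional of $(\rho^{\theta_0},M^{(mark),\theta_0})$, namely the same functional $\Phi$ that produces $R$ out of $(\rho,M^{(mark)})$.

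Now Proposition \ref{prop:Lrq} says that $(\rho^{\theta_0},M^{(mark),\theta_0})$ under $\hat\P^\psi_x$ has the law of $(\rho,M^{(mark)})$ under $\hat\P^{\psi_{\theta_0}}_x$. Applying the deterministic map $\Phi$ to both sides, the image law of $(\rho^{\theta_0+\theta},\theta\ge 0)$ under $\tilde\P^\psi_x$ coincides with the law of $R$ under $\tilde\P^{\psi_{\theta_0}}_x$, which is $\tilde\P^{\psi_{\theta_0}}_x$ by definition. Repeating the argument with $\hat\N^\psi$ in place of $\hat\P^\psi_x$, and using that $\Phi$ is measurable and excursion theory transports $\sigma$-finite measures in the same way, yields the excursion-measure statement.

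The main obstacle, and really the only substantive step, is the bookkeeping that proves the composition identity $A^{(\theta_0+\theta)} = A^{(\theta_0)}\circ(\text{shifted } A^{(\theta)})$, i.e.\ that the two-stage pruning coincides with the one-stage pruning at level $\theta_0+\theta$; once that is in hand the rest is a direct application of Lemma \ref{lem:R-Markov} and Proposition \ref{prop:Lrq}.
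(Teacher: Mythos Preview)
Your proposal is correct and is essentially the paper's own argument, spelled out in full detail: the paper simply states that the corollary follows ``from the Markov property of $R$ and Proposition \ref{prop:Lrq}'', which is precisely your strategy of identifying $(\rho^{\theta_0+\theta},\theta\ge 0)$ as the deterministic pruning functional $\Phi$ applied to $(\rho^{\theta_0},M^{(mark),\theta_0})$ and then invoking Proposition \ref{prop:Lrq} to transport the law. The semigroup identity $\Lambda_{\theta_0+\theta}=\Lambda_\theta\circ\Lambda_{\theta_0}$ you isolate is exactly the content hidden behind the paper's appeal to the Markov property (and is in fact slightly stronger, since it works at the level of the marked process rather than just $\rho^\theta$); your final reference to Lemma \ref{lem:R-Markov} is therefore not strictly needed, as the composition identity already does the work.
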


Let us now recall the special Markov property, Theorem 4.2 of
\cite{advplcrt}, stated for the present context. We fix $\theta>0$. We
want to describe the law of the excursions of $\rho$ ``above'' the
marks, given the process ``under'' the marks. More precisely, we define
$O$ as the interior of the set $\{s\ge0, m_s^{(\theta)}=0\}$ and write
$ O=\bigcup_{i\in I}(\alpha_i,\beta_i)$. For every
$i\in
I$, we define the exploration process $\rho^{(i)}$ by: for every
$f\in\cb_+(\R_+)$, $t\ge0$,
\[
\bigl\langle
\rho^{(i)}_t,f\bigr\rangle=\int_{[H_{\alpha_i},+\infty)}f(x-H_{\alpha
_i})\rho_{(\alpha_i+t)\wedge
\beta_i}(dx).
\]
We have the following theorem.
\begin{theo}[(Special Markov property)]
\label{thm:special_Markov}
Let $\theta>0$, and let $(Z_t^\theta,t\ge0)$ be the CSBP coded by
$\rho^\theta$.
The point measure
\[
\sum_{i\in I}\delta_{(H_{\alpha_i},\rho^{(i)})}(dh,d\mu)
\]
under $\P_x^\psi$ (or $\N^\psi$) conditionally given
$(\rho_t^{\theta},t\ge0)$, is a Poisson point measure of intensity
\[
\ind_{[0,+\infty)}(h) Z_h^\theta\, dh \biggl(2\beta\theta
\N^\psi(d\mu)+\int_{(0,+\infty)}\pi(dr)(1-\mathrm{e}^{-\theta
r})\P_r^\psi(d\mu) \biggr).
\]
\end{theo}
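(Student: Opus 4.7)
The plan is to prove the theorem as an instance of the Special Markov Property of \cite{adv:plcrt}, with two types of marks contributing separately and independently to the intensity. Fix $\theta>0$ and decompose the mark process as $M^{(mark)}=M^{(nod)}+M^{(ske)}$ per \reff{eq:mark-process}. Correspondingly split the excursion intervals $(\alpha_i,\beta_i)_{i\in I}$ of the open set $O$ into $I=I_{nod}\sqcup I_{ske}$ according to the type of the first mark encountered on the lineage of $\alpha_i$. The goal is to show that, conditionally on $(\rho^\theta_t, t\geq 0)$, the two sub-families independently form Poisson point measures with respective intensities $Z_h^\theta\,dh\int_{(0,+\infty)}\pi(dr)(1-\expp{-\theta r})\P_r^\psi(d\mu)$ and $2\beta\theta\,Z_h^\theta\,dh\,\N^\psi(d\mu)$.

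First I would handle the skeleton component. By construction (Section \ref{sec:Mske}), conditionally on $\rho$ the marks of $M^{(ske)}$ at time $\theta$ form a Poisson point measure on the skeleton with linear intensity $2\beta\theta$ along each ancestral line. By the snake property, the excursions $\rho^{(i)}$ for $i\in I_{ske}$ above these marks are, conditionally on their heights, i.i.d.\ copies of $\N^\psi$ and independent of $\rho^\theta$. One then converts the intensity of mark positions from the time variable $t$ to the height variable $h$ on the pruned tree: using Proposition \ref{prop:LT} together with Proposition \ref{prop:Lrq}, which identifies the pruned exploration process with the canonical one under $\N^{\psi_\theta}$, the intensity of skeleton marks expressed along the pruned process becomes $2\beta\theta\,Z_h^\theta\,dh$, as desired.

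The node component is parallel. Each jump of $X$ at time $s$ contributes an infinite node of size $r=\Delta_s$ at height $H_s$ which, by the exponential clock construction of Section \ref{sec:Mnod}, is independently marked by time $\theta$ with probability $1-\expp{-\theta r}$. By Lemma \ref{lem:poisson_repr} applied beneath the marked node, the excursion $\rho^{(i)}$ above such a node is distributed as $\P_r^\psi$. The Poisson representation of Proposition \ref{prop:poisson_representation1} identifies the intensity of jumps of $X$ along the exploration as $\pi(dr)$ per unit local time at each height; thinning by the mark probability $1-\expp{-\theta r}$ then yields the announced node intensity. The independence of node and skeleton contributions is built into the product structure of $M^{(nod)}$ and $M^{(ske)}$.

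The main obstacle is to justify rigorously, and simultaneously, the conditional independence of the family $(\rho^{(i)})_{i\in I}$ given $\rho^\theta$ together with the identification of the intensity in terms of the local time $Z_h^\theta$ of the pruned process (rather than in terms of a ``time along the exploration'' variable). This is where the argument is not just a soft excursion-theoretic observation: one computes the joint Laplace functional of $\rho^\theta$ and $\sum_i\delta_{(H_{\alpha_i},\rho^{(i)})}$ using the Poisson representation of $(\rho,\eta)$ from Proposition \ref{prop:poisson_representation1} and the exponential structure of the node and skeleton marks, which is precisely the computation carried out in Section 4 of \cite{adv:plcrt}. Matching terms in the resulting exponential gives the announced intensity.
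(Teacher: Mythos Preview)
Your proposal is essentially in line with the paper's own treatment: the paper does not prove this statement at all but simply recalls it as Theorem~4.2 of \cite{adv:plcrt}, restated in the present notation. Your sketch correctly identifies the two sources of marks, the thinning probabilities $1-\expp{-\theta r}$ for nodes and the rate $2\beta\theta$ for the skeleton, and rightly acknowledges that the substantive work (conditional independence given $\rho^\theta$ and the passage from time to local time $Z_h^\theta$) is the content of the Laplace-functional computation in \cite{adv:plcrt}; so there is no divergence to discuss.
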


This theorem describes in fact the joint law of
$(\rho^{(\theta)},\rho^{(\theta')})$ for $\theta<\theta'$ and
hence the transition
probabilities of the process $R$ and of the time-reversed
process. In terms of trees, by definition, the tree $\ct^{(\theta')}$
is obtained from the tree $\ct^{(\theta)}$ by pruning it with the
pruning operator $\Lambda_{\theta'-\theta}$. Conversely, to get the
tree $\ct^{(\theta)}$ from the tree $\ct^{(\theta')}$, we pick some
individuals of the tree $\ct^{(\theta')}$ according to a Poisson
point measure and add at these points either a L\'{e}vy tree
associated with the branching mechanism $\psi_\theta$ (first part of
the intensity of the Poisson measure), or an infinite node of size
$r$ and trees distributed as $\P_r^{\psi_\theta}$ (second part of
the intensity of the Poisson measure).

\subsection{Pruning of super-critical CRT}

We now use the same Girsanov techniques of Section \ref{sec:superCRT}
to define a $\psi$-family of pruned CRT when $\psi$ is
super-critical.\vadjust{\goodbreak}

Let $\psi$ be a super-critical branching mechanism which we suppose to
be conservative, that is, \reff{eq:conservatif} holds. Recall $q^*$ is the
unique (positive) root of $\psi'(q)=0$. In particular the branching
mechanism $\psi_q$ is critical if $q=q^*$ and sub-critical if $q> q^*$.

Let $q\geq q^*$. Let $R=(\rho^\theta,\theta\ge0)$ be the canonical
process on $\cw$. We set $Z=(L_\infty^a(\rho^0),a\ge0)$ which is under
$\tilde\P_x^{\psi_{{q}}}(dR)$ a CB with branching mechanism $\psi
_q$. The
process $Z$ is also well defined under the excursion measure $\tilde
\N^{\psi_q} (dR)$. We write $\pi_a(R)=(\pi_a(\rho^\theta), \theta
\geq
0)$. Notice that given the marks (i.e., given~$M^{(\mathrm{nod})}$ and
$M^{(\mathrm{ske})}$), we have $\pi_a(\rho^\theta)=(\pi_a(\rho))^\theta$.

Let $a\geq0$.
We define the distribution $ \tilde\P_x^{\psi,a}$ (resp., excursion measure
$\tilde\N^{\psi,a}$) of a $\psi$-family of
pruned CRT
cut at level $a$ with initial mass $x$, as the
distribution of $\pi_a(R)$ under $M^{\psi_q,-q}_a
d\tilde\P^{\psi_q}_x$ [resp., $ \mathrm{e}^{q Z_a +\psi(q)\int_0^a
Z_r\, dr}\,d\tilde\N^{\psi_q}$]: for any
measurable nonnegative function $F$, we have
\[
\tilde\P^{\psi,a} _x [F(R)  ]
=\tilde\P^{\psi_q} _x [M^{{\psi_q},-q}_a F(\pi_a(R))  ]
\]
and
\[
\tilde\N^{\psi,a}  [F(\rho)  ] =\tilde\N^{\psi_q}
\bigl [\mathrm{e}^{q Z_a +\psi(q)\int_0^a Z_r\, dr} F(\pi_a(\rho))
 \bigr].
\]
Same arguments as for Lemma \ref{lem:Pyqq} give the following result.
\begin{lem}
The distributions $\tilde\P^{\psi,a}_x$ and $\tilde\N^{ \psi,a
}$ do not
depend on the choice of $q\geq q^*$.
\end{lem}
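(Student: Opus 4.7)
The plan is to follow the argument of Lemma \ref{lem:Pyqq}, once a $q$-independent conditional description of the pruning is available. The key observation is that, conditionally on the underlying exploration process $\rho^0$, the joint law of the marks $M^{(mark)}=M^{(nod)}+M^{(ske)}$ does not depend on the choice of $q\geq q^*$. Indeed, the skeleton marks $M^{(ske)}$ are defined as a L\'evy snake driven only by the Gaussian coefficient $\beta$, which coincides for $\psi$ and $\psi_q$; and the node marks $(T_s)_{s\in\cj}$ are independent exponentials with rates $\Delta_s$, which are the \emph{observed} jumps of $X$, not parameters of the L\'evy measure. Consequently, there exists a Markov kernel $K_a$ on $\cm_f([0,a])\times\cw$ such that, for every $q\geq q^*$, the conditional law of $\pi_a(R)$ given $\pi_a(\rho^0)$ under $\tilde\P^{\psi_q}_x$ (resp.\ under $\tilde\N^{\psi_q}$) is $K_a(\pi_a(\rho^0),\cdot)$. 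Here I would also use the fact, explicitly noted before the statement, that $\pi_a$ and pruning commute: $\pi_a(\rho^\theta)=(\pi_a(\rho^0))^\theta$.

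Granted this, for any non-negative measurable $F$ on $\cw$, I would set $G(\mu):=\int K_a(\mu,dR')F(R')$. The martingale $M^{\psi_q,-q}_a$ is a function of $Z_a$ and $\int_0^a Z_s\,ds$, hence $\sigma(\pi_a(\rho^0))$-measurable, so the tower property gives
\begin{equation*}
\tilde\E^{\psi_q}_x\!\left[M^{\psi_q,-q}_a F(\pi_a(R))\right]
=\E^{\psi_q}_x\!\left[M^{\psi_q,-q}_a G(\pi_a(\rho))\right]
=\E^{\psi,a}_x[G(\rho)],
\end{equation*}
where the last equality is the defining identity \reff{eq:defEpsia} combined with Lemma \ref{lem:Pyqq}. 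As the right-hand side is $q$-independent, so is the left-hand side, proving the invariance of $\tilde\P^{\psi,a}_x$. The same argument, using \reff{eq:defNpsia} and the excursion-measure part of Lemma \ref{lem:Pyqq}, yields the statement for $\tilde\N^{\psi,a}$.

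The main delicate point is the verification of the $q$-independence of the pruning kernel $K_a$. It is essentially automatic from the construction of Sections \ref{sec:Mnod}--\ref{sec:Mske}, but writing it cleanly amounts to putting the pair $(\rho^0, M^{(mark)})$ into a form where the conditional law of the marks given $\rho^0$ is manifestly specified only in terms of $\beta$ and the observed sizes of jumps of $X$ (rather than in terms of the L\'evy measure $\pi$). Once this is done, the rest is essentially the same calculation as in the proof of Lemma \ref{lem:Pyqq}.
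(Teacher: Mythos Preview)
Your proposal is correct and is essentially the approach the paper has in mind: the paper's proof is the single sentence ``Same arguments as for Lemma~\ref{lem:Pyqq} give the following result'', and your argument is precisely the natural way to make that sentence rigorous. The only extra ingredient needed beyond the computation of Lemma~\ref{lem:Pyqq} is the observation that the conditional law of the marks (and hence of $\pi_a(R)$) given $\pi_a(\rho^0)$ depends only on $\beta$ and on the observed jump sizes, not on the L\'evy measure $\pi^{(q)}$; you identify this point clearly and use it to factor through the kernel $K_a$, after which the Girsanov calculation of Lemma~\ref{lem:Pyqq} applies verbatim to $G(\pi_a(\rho))$.
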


As in Section \ref{sec:superCRT} (see Proposition \ref{prop:Pcomp}) the
families of measures $(\tilde\P_x^{\psi,a},x\ge
0)$ and $(\tilde\N^{\psi,a}, a\geq0) $ fulfill a compatibility
relation. Hence there exists a~projective limit $(R^a,a\ge0)$ defined
on the space of $\cw$-valued process such that:
\begin{itemize}
\item for every $a\ge0$, $R^a$ is distributed as
$\tilde\P_x^{\psi,a}$;
\item for every $a<b$, $\pi_a(R^b)=R^a$.
\end{itemize}
We write $\tilde\P_x^\psi$ for the distribution of this projective
limit and
$\tilde\N^\psi$ for the corresponding excursion measure.

By construction the local time at level $a$ of $\pi_b(\rho^\theta)$ for
$b\geq a$ does not depend on $b$, we denote by $Z_a^\theta$ its value.
Proposition \ref{prop:Lrq} and Property (ii) of Theorem~\ref{theo:mart}
imply that $Z^\theta=(Z_a^\theta, a\geq0)$ is under $\tilde\P^\psi
_x$ a CB
with branching mechanism $\psi_\theta$ started at $x$. Following
\reff{eq:defs2}, we
define $\sigma_\theta=\int_0^\infty Z^\theta_a\, da$. And, when there
is no confusion, we write $\sigma$ for $\sigma_0$.

Following Corollaries \ref{cor:Pb=P}, \ref{cor:Pbpsiq} and Lemma
\ref{lem:sfini}, we easily get the following theorem.

\begin{theo}
\label{theo:tildeP}
Let $\psi$ be a conservative branching mechanism. Let $(R^a,\allowbreak a\ge0)$
be a
$\cw$-valued process under $\tilde\P_x^\psi$ (resp., $\tilde\N
^\psi$).
\begin{longlist}[(3)]
\item[(1)]
If $\psi$ is (sub)critical, then $(R^a,a\ge0)$ under
$\tilde\P_x^\psi$ is distributed as $((\pi_a(\rho^\theta),\allowbreak  \theta
\geq0),
a\geq0) $ under $ \P_x^\psi$.\vadjust{\goodbreak}
\item[(2)] Let $q>0$ such that $\psi(q)\geq0$. Then, the probability
measure $\tilde\P_x^{\psi_q}$ is absolutely continuous with respect to
$\tilde\P_x^\psi$ with
\[
\frac{d\tilde\P_x^{\psi_q}}{d\tilde\P_x^\psi}=
M^{\psi,q}_\infty=\mathrm{e}^{qx-\psi(q)\sigma}\ind_{\{\sigma
<+\infty
\}}.
\]
The measure
$\tilde\N^{\psi_q}$ is absolutely continuous with respect to $\tilde
\N^\psi$
with
\[
\frac{d\tilde\N^{\psi_q}}{d\tilde\N^\psi}=\mathrm{e}^{-\psi(q)
\sigma}\ind_{\{\sigma<+\infty\}}.
\]
\item[(3)]
On $\{\sigma<{+\infty} \}$, there exists
$R^{\infty}\in\cw$ such that $R^a=\pi^a(R^\infty)$ for
all $a\geq0$, $\tilde\P^\psi_x$-a.s. or $ \tilde\N^\psi$-a.e.
\end{longlist}
\end{theo}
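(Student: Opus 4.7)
I will mirror the proofs of Corollaries \ref{cor:Pb=P}, \ref{cor:Pbpsiq} and Lemma \ref{lem:sfini}, substituting the pruning-valued process $R$ for the bare exploration process. Two structural observations make this transposition essentially automatic. First, pruning commutes with level truncation: once the marks $(M^{(nod)},M^{(ske)})$ are fixed, $\pi_a(\rho^\theta)=(\pi_a(\rho^0))^\theta$ for all $a,\theta\ge 0$, so any cylindrical functional of $R$ at level $a$ factors through $\pi_a(\rho^0)$ and the marks below level $a$. Second, the kernel giving the marks as a function of $\rho^0$ uses only the jump sizes of $\rho^0$ and the fixed constant $\beta$; it is therefore the same under $\tilde\P_x^\psi$ and under $\tilde\P_x^{\psi_q}$ and is inert under the Girsanov change of measure.

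For (1), $\psi$ being (sub)critical one has $q^*=0$; inserting $q=q^*=0$ in the definition of $\tilde\P_x^{\psi,a}$ produces density $M_a^{\psi,0}\equiv 1$, so $\tilde\P_x^{\psi,a}$ coincides with the $\pi_a$-marginal of $R$ built by the (sub)critical pruning of Section \ref{sec:prune-const}. The compatibility $\pi_a(R^b)=R^a$ then identifies, via Kolmogorov's extension theorem, the projective limit $\tilde\P_x^\psi$ with the law of $((\pi_a(\rho^\theta),\theta\ge 0),a\ge 0)$ under the direct (sub)critical construction.

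For (2), I first aim to prove the finite-level identity
\begin{equation*}
\tilde\E_x^{\psi_q}\bigl[F(\pi_a(R))\bigr]=\tilde\E_x^{\psi}\bigl[M_a^{\psi,q}\,F(\pi_a(R))\bigr]
\end{equation*}
for bounded cylindrical $F$, and then let $a\to\infty$. Unfolding both projective-limit definitions through the common reference $\tilde\P_x^{\psi_{q^*}}$ and using the algebraic identity
\begin{equation*}
M_a^{\psi_{q^*},-q^*}\,M_a^{\psi,q}=M_a^{\psi_{q^*},q-q^*},
\end{equation*}
which reflects $\psi_{q^*}(-q^*)=-\psi(q^*)$ and $\psi_{q^*}(q-q^*)=\psi(q)-\psi(q^*)$, reduces the identity to a Girsanov formula for the pruning-valued process at the critical mechanism $\psi_{q^*}$ and parameter $q-q^*\ge 0$. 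Conditioning on $\rho^0$ and invoking the mark-kernel observation above further reduces the remaining task to the exploration-level Girsanov formula \reff{eq:Expsi}. Since under $\tilde\P_x^\psi$ the process $Z$ is a $\psi$-CB, Lemma \ref{lem:propZ}(ii) yields $M_a^{\psi,q}\to M_\infty^{\psi,q}$ in $L^1(\tilde\P_x^\psi)$, and dominated convergence (using $F$ bounded) replaces $M_a^{\psi,q}$ by $M_\infty^{\psi,q}$ in the limit. A monotone class argument extends the identity to all bounded measurable functionals on $\cw$, and the corresponding excursion-measure statement follows by the standard disintegration over the first excursion away from $0$.

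For (3), apply (2) with $q=q_0$. Since $\psi(q_0)=0$, the density $M_\infty^{\psi,q_0}=\expp{q_0 x}\ind_{\{\sigma<\infty\}}$ is bounded and bounded away from $0$ on $\{\sigma<\infty\}$, so $\tilde\P_x^\psi$ and $\tilde\P_x^{\psi_{q_0}}$ are mutually absolutely continuous on this event. The mechanism $\psi_{q_0}$ is sub-critical, and by (1) the process $R$ under $\tilde\P_x^{\psi_{q_0}}$ is the genuine pruning process, whence Lemma \ref{lem:sfini} applied to each coordinate $\rho^\theta$ separately produces a $\tilde\P_x^{\psi_{q_0}}$-a.s.\ limit $R^\infty\in\cw$ with $\pi_a(R^\infty)=R^a$ for all $a\ge 0$. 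Mutual absolute continuity transfers this almost-sure property to $\tilde\P_x^\psi$ on $\{\sigma<\infty\}$, and the excursion-measure case is handled identically. The main technical obstacle is the finite-level Girsanov identity of step (2): the CB-level result Theorem \ref{theo:mart}(ii) only controls $Z$, and one must lift this change of measure through the mark-induced randomness to the full pruning-valued process $R$; this is where the disintegration over $\rho^0$ together with the mark-kernel invariance is crucial.
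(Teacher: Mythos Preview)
Your proposal is correct and follows essentially the same approach as the paper, which only offers the one-line justification ``Following Corollaries \ref{cor:Pb=P}, \ref{cor:Pbpsiq} and Lemma \ref{lem:sfini}, we easily get the following theorem.'' Your main contribution is making explicit the mark-kernel invariance argument (the conditional law of $M^{(mark)}$ given $\rho^0$ depends only on the jump sizes and on $\beta$, hence is unchanged under the Girsanov tilt), which is exactly what is needed to lift \reff{eq:Expsi} from the bare exploration process to the pruning-valued process $R$; the paper leaves this step implicit. One minor remark on part (3): you do not actually need to invoke Lemma \ref{lem:sfini} coordinate by coordinate (which would create an uncountable-union issue); once (1) identifies $\tilde\P_x^{\psi_{q_0}}$ with the genuine sub-critical pruning construction, $R^\infty$ is simply the pruning process $(\rho^\theta,\theta\ge 0)$ itself and $R^a=\pi_a(R^\infty)$ holds by construction.
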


Without confusion, we shall always write $\P^\psi$ instead of $\tilde
\P^\psi$ and $\N^\psi$ instead of $\tilde\N^\psi$ and call them
the law
or the excursion measure of $\psi$-pruned family of exploration
processes, whether $\psi$ is super-critical or (sub)critical. The
$\psi$-pruned family of exploration processes codes for a $\psi$-pruned
family of continuum random sub-trees.

And we shall write $(\rho^\theta, \theta\geq0)$ for the projective
limit $(R^a, a\geq0)$, and identify it with $R^\infty\in\cw$
when the latter exists, that is, when $\sigma$ defined by~\reff{eq:defs2}
is finite. Notice that if $\sigma_\theta$ is finite, then the exploration
process $\rho^\theta$ codes for a CRT with finite mass.

\subsection{Properties of the branching mechanism}
\label{sec:propbm}
Let $\psi$ be a branching mechanism with parameter
$(\alpha,\beta,\pi)$. Let $\Theta'$ be the set of $\theta\in\R$ such
that
%
%
\begin{equation}
\label{eq:Hyp}
\int_{(1,+\infty)}\mathrm{e}^{-\theta \ell}\pi(d\ell)<+\infty.
\end{equation}
We set $\theta_\infty=\inf\Theta'$. Notice that we have either
$\Theta'=[\theta_\infty,+\infty)$ or $\Theta'=(\theta_\infty
,+\infty)$ and
that $\theta_\infty\le0$. Notice that $\psi_\theta$ exists for every
$\theta\in\Theta'$ and is conservative for every $\theta>\theta
_\infty$.
We set $\Theta=\{\theta\in\Theta'; \psi_\theta\mbox{ is
conservative}\}$. Notice that $\Theta\subset\Theta'\subset
\Theta\cup\{\theta_\infty\}$.

For instance, we have the following examples of critical branching
mechanisms:
\begin{longlist}[(iii)]
\item[(i)] quadratic case: $\psi(u)=\beta u^2$, $\Theta=\Theta'=\R$;
\item[(ii)] stable case: $\psi(u)=cu^\alpha$ with $\alpha\in(1,2)$,
$\Theta=\Theta'=[0,+\infty)$;
\item[(iii)] $\psi(u)=(u+\mathrm{e}^{-1})\log(u+\mathrm{e}^{-1})+
\mathrm{e}^{-1}$:
$\Theta=\Theta'=[-\mathrm{e}^{-1}, +\infty)$ [Notice that
$\psi_{\theta_\infty}(u)=u\log(u)$, $\psi'_{\theta_\infty
}(0+)=-\infty$ and $\psi_{\theta_\infty}$ is conservative.];
\item[(iv)] $\psi(u)=u-1+\frac{1}{1+u}$ is associated with $(\tilde
\alpha,
\beta, \pi)$ where $\tilde\alpha=2/\mathrm{e}^{}$, $\beta=0$
and $\pi(d\ell)=\mathrm{e}^{-\ell}\ind_{\{\ell>0\}}\,d\ell$:
$\Theta
=\Theta'=(-1,
+\infty)$.
\end{longlist}

For the end of this subsection, we assume that $\psi$ is CRITICAL and
that $\beta>0$ or $\pi\neq0$. Remark that $\psi$ is a one-to-one
function from $[0,+\infty)$ onto $[0,+\infty)$, and we denote by
$\psi^{-1}$ its inverse function.\vadjust{\goodbreak} For $\theta<0$ such that $\theta
\in
\Theta'$, we define $\bar\theta=\psi^{-1}(\psi(\theta))$, or,
equivalently,
$\bar\theta$ is the unique positive real number such that
%
%
\begin{equation}
\label{eq:defbt}
\psi(\bar\theta)=\psi(\theta).
\end{equation}
Since $\psi$ is continuous and strictly convex, if
$\theta_\infty\in\Theta'$, we have
%
%
\begin{equation}
\label{eq:defbqi}
\bar\theta_\infty=\lim_{\theta\downarrow\theta_\infty} \bar
\theta.
\end{equation}
Notice that in this case $\bar\theta_\infty$ is finite. If
$\theta_\infty\notin\Theta'$, we define $\bar\theta_\infty$ using
\reff{eq:defbqi}.
\begin{lem}
\label{lem:limyqi}
Let $\psi$ be CRITICAL with parameters $(\tilde\alpha, \beta, \pi)$
such that $\beta>0$ or $\pi\neq0$. If $\theta_\infty\notin
\Theta'$ then $\bar\theta_\infty=+\infty$.
\end{lem}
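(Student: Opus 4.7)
The plan is to show that, under the assumption $\theta_\infty \notin \Theta'$, the value $\psi(\theta)$ blows up as $\theta$ decreases to $\theta_\infty$, and then to invert this via the continuous strictly increasing map $\psi:[0,+\infty)\to[0,+\infty)$ to conclude $\bar\theta\to+\infty$.

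First, I would verify that $\psi$ restricted to $[0,+\infty)$ is a homeomorphism onto $[0,+\infty)$. Since $\psi$ is critical, $\psi(0)=0$ and $\psi'(0)=0$, and since $\beta>0$ or $\pi\neq 0$, $\psi$ is strictly convex; so $\psi'$ is strictly increasing with $\psi'(0)=0$, hence $\psi'>0$ on $(0,+\infty)$ and $\psi$ is strictly increasing on $[0,+\infty)$. For the limit at infinity, $\psi'(\lambda)\geq \psi'(1)>0$ for $\lambda\geq 1$, which gives $\psi(\lambda)\to+\infty$. Thus $\psi^{-1}$ is a well-defined continuous strictly increasing map from $[0,+\infty)$ onto $[0,+\infty)$, and for $\theta\in(\theta_\infty,0)\cap\Theta'$ we have $\bar\theta=\psi^{-1}(\psi(\theta))$.

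Second, I would analyse $\psi(\theta)$ as $\theta\downarrow\theta_\infty$. Writing
\[
\psi(\theta)=\tilde\alpha\theta+\beta\theta^2+\int_{(0,1]}\pi(d\ell)\bigl[\expp{-\theta\ell}-1+\theta\ell\bigr]+\int_{(1,+\infty)}\pi(d\ell)\bigl[\expp{-\theta\ell}-1\bigr],
\]
the polynomial terms and the first integral are continuous and locally bounded in $\theta\in\R$ (the integrand in the $(0,1]$-part is uniformly bounded in $\theta$ over a compact neighbourhood of $\theta_\infty$ by a $\pi$-integrable function since $\pi$ integrates $\ell^2$ on $(0,1]$). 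For the last integral, since $\theta\mapsto\expp{-\theta\ell}$ is decreasing in $\theta$, as $\theta\downarrow\theta_\infty$ the integrand $\expp{-\theta\ell}-1$ is non-negative on $(1,+\infty)$ (for $\theta<0$) and increases pointwise to $\expp{-\theta_\infty\ell}-1$; monotone convergence together with the hypothesis $\int_{(1,+\infty)}\expp{-\theta_\infty\ell}\pi(d\ell)=+\infty$ (which is exactly $\theta_\infty\notin\Theta'$) and the finiteness of $\pi((1,+\infty))$ yields
\[
\int_{(1,+\infty)}\pi(d\ell)\bigl[\expp{-\theta\ell}-1\bigr]\xrightarrow[\theta\downarrow\theta_\infty]{}+\infty.
\]
Hence $\psi(\theta)\to+\infty$ as $\theta\downarrow\theta_\infty$.

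Finally, since $\psi^{-1}$ is continuous on $[0,+\infty)$ with $\psi^{-1}(y)\to+\infty$ as $y\to+\infty$, applying it to $\psi(\theta)$ gives
\[
\bar\theta=\psi^{-1}(\psi(\theta))\xrightarrow[\theta\downarrow\theta_\infty]{}+\infty,
\]
so $\bar\theta_\infty=+\infty$ by the very definition \reff{eq:defbqi}. The only subtle step is the monotone convergence argument for the $(1,+\infty)$-part of $\psi(\theta)$, which relies crucially on $\pi((1,+\infty))<+\infty$ (guaranteed by $\int(1\wedge\ell^2)\,\pi(d\ell)<+\infty$) so that subtracting $-1$ from $\expp{-\theta\ell}$ keeps everything meaningful; all other steps are straightforward consequences of the convexity of $\psi$.
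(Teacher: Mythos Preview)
Your argument is correct when $\theta_\infty>-\infty$, and in that case it is exactly the paper's own argument (monotone convergence on the $(1,+\infty)$-part of the L\'evy--Khintchine representation, using $\pi((1,+\infty))<+\infty$). However, you have a genuine gap: the hypothesis $\theta_\infty\notin\Theta'$ also covers the case $\theta_\infty=-\infty$, since $\Theta'\subset\R$ and $-\infty$ is never in it. Your proof explicitly uses ``a compact neighbourhood of $\theta_\infty$'' and interprets $\theta_\infty\notin\Theta'$ as $\int_{(1,+\infty)}\expp{-\theta_\infty\ell}\pi(d\ell)=+\infty$, both of which are meaningless when $\theta_\infty=-\infty$. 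More concretely, in the quadratic case $\psi(\lambda)=\beta\lambda^2$ one has $\pi=0$, $\theta_\infty=-\infty$, and the $(1,+\infty)$-integral is identically zero, so the blow-up cannot come from that term.

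The paper handles the case $\theta_\infty=-\infty$ separately and by a one-line convexity argument: since $\psi$ is strictly convex with $\psi'(0)=0$, one has $\psi'(\theta_0)<0$ for any $\theta_0<0$, and then $\psi(\theta)\geq \psi(\theta_0)+\psi'(\theta_0)(\theta-\theta_0)\to+\infty$ as $\theta\to-\infty$. Adding this case distinction at the start of your ``Second'' step repairs the proof completely; the rest of your write-up (including the inversion step via $\psi^{-1}$) is fine and in fact more detailed than what the paper provides.
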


\begin{pf}
We assume that $\theta_\infty\notin\Theta'$. It is enough to check
that\break $\lim_{\theta\downarrow\theta_\infty}
\psi(\theta)=+\infty$ to get $\bar\theta_\infty=+\infty$.

We first consider the
case $\theta_\infty=-\infty$. Since $\psi'(0)=0$ and $\psi$ is
strictly convex, we get that $\lim_{\theta\downarrow\theta_\infty}
\psi(\theta)=+\infty$.

If $\theta_\infty>-\infty$, then using that \reff{eq:Hyp} does not
hold for $\theta_\infty$ and monotone
convergence theorem, we get that $\lim_{\theta\downarrow\theta
_\infty}
\psi(\theta)=+\infty$.
\end{pf}


\section{A tree-valued process}
\label{sec:tree-valued}
Let $\psi$ be a branching mechanism. We assume $\theta_\infty<0$.
We write $\crr_{q}=(\rho^{\gamma+q}, \gamma\geq0)$.

We deduce from Corollary \ref{cor:compR} that the families of measures
\mbox{$(\P^{\psi_\theta}, \theta\in\Theta)$} and $(\N^{\psi_\theta},
\theta\in
\Theta)$
satisfy the following compatibility property: if $\theta'<\theta$,
\mbox{$\theta'\in\Theta$}, the
process $\crr_{\theta-\theta'}$ under
$\P^{\psi_{\theta'}}$ (resp., $\N^{\psi_{\theta'}}$) is
distributed as
$\crr_0$ under $\P^{\psi_\theta}$
(resp., $\N^{\psi_\theta}$).

Hence, there exists a projective limit $\crr=(\rho^\gamma, \gamma
\in
\Theta)$ such that, for every \mbox{$\theta\in\Theta$}, the process
$(\rho^{\theta+\gamma},\gamma\ge0)$ is distributed as $(\rho
^\gamma,
\gamma\geq0)$ under $\P^{\psi_{\theta}}$. We denote by $\bP^\psi
$ the
distribution of the projective limit $\crr$, and by $\bN^\psi$ the
corresponding excursion measure. We still write $\crr_\theta$ for
$(\rho^{\theta+\gamma}, \gamma\geq0)$ for all $\theta\in\Theta$.

The process $\crr=(\rho^\theta, \theta\in\Theta)$ is Markovian, thanks
to Lemma \ref{lem:R-Markov}. It codes for a tree-valued Markov process,
which evolves according to a pruning procedure. At time $\theta$,
$\rho^\theta$ has distribution $\P^{\psi_\theta}$. Recall
$\sigma_\theta$ is the mass of the CRT coded by $\rho^\theta$. It is
not difficult to check that $\Sigma=(\sigma_\theta, \theta\in
\Theta)$
is a nonincreasing Markov process taking values in $[0,+\infty]$ and
we shall consider a version of $\crr$ such that the process $\Sigma$ is
c\`{a}d-l\`{a}g. From the continuity of $\psi$, we deduce that the
Laplace transform
of $\sigma_\theta$ given in Lemma \ref{lem:Nexps} is continuous, and
thus the process $\Sigma$ is continuous in probability.

See \cite{bfatfadbhf} for the distribution of the decreasing
rearrangement of the jumps of $(\sigma_\theta, \theta\geq0)$ in the
case of stable trees. We deduce
from the pruning procedure that a.s. $\lim_{\theta\rightarrow+\infty}
\sigma_\theta=0$. Notice that by considering the time returned process
$(\rho^{-\theta}, \theta<\theta_\infty)$, we get a Markovian
family of
exploration processes coding for a family of increasing CRTs.

\begin{rem}
\label{rem:critique}
Recall $q^*$ is the unique root of
$\psi'(q)=0$ and that $\psi_{q^*}$ is critical. Using a shift on
$\theta$ by $q^*$, that is replacing $\psi$ by $\psi_{q^*}$, one sees
that it is enough, when studying $\crr$, to assume
that $\psi$ is critical.
\end{rem}

\begin{lem}
\label{lem:Nrs}
Let $\psi$ be a critical branching mechanism with parameter
$(\alpha,\beta, \pi)$. For any $\theta\in
\Theta$, and any nonnegative measurable function $F$
defined on the state
space of $\crr_0$, we have
%
%
\begin{equation}
\label{eq:Nrs}\qquad
\bN^\psi \bigl[F(\crr_\theta)\ind_{\{\sigma_\theta<\infty\}
} \bigr]=
\bN^{\psi_\theta} \bigl[F(\crr_0)\ind_{\{\sigma_0<\infty\}
} \bigr]=
\bN^{\psi}\bigl [F(\crr_0)\mathrm{e}^{-\psi(\theta) \sigma
_0} \bigr].
\end{equation}
\end{lem}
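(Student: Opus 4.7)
The plan is to prove the two equalities separately, with the first being essentially definitional and the second reducing to a Girsanov-type computation via Theorem \ref{theo:tildeP}(ii).

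For the first equality, I would invoke the projective-limit construction of $\bN^\psi$. By the very definition of $\bN^\psi$ (the compatibility property stated just before Remark \ref{rem:critique}), for every $\theta\in\Theta$ the shifted family $\crr_\theta=(\rho^{\theta+\gamma},\gamma\geq 0)$ under $\bN^\psi$ has the same distribution as $\crr_0=(\rho^\gamma,\gamma\geq 0)$ under $\bN^{\psi_\theta}$. Since $\sigma_\theta$ is by construction the total mass of the first coordinate $\rho^\theta$ of $\crr_\theta$, it corresponds under this distributional identity to $\sigma_0$ on the right-hand side. Thus
\[
\bN^\psi[F(\crr_\theta)\ind_{\{\sigma_\theta<\infty\}}]
=\bN^{\psi_\theta}[F(\crr_0)\ind_{\{\sigma_0<\infty\}}],
\]
which is the first equality.

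For the second equality I would split on the sign of $\theta$. If $\theta\geq 0$, then $\psi_\theta$ is (sub)critical and $\psi(\theta)\geq 0$, so Theorem \ref{theo:tildeP}(ii) applied to $\bN^\psi$ with $q=\theta$ gives directly
\[
\bN^{\psi_\theta}[G(\crr_0)\ind_{\{\sigma_0<\infty\}}]=\bN^\psi[G(\crr_0)\expp{-\psi(\theta)\sigma_0}],
\]
and since $\psi$ is critical, $\sigma_0<\infty$ $\bN^\psi$-a.e., so the indicator on the right can be dropped. The case $\theta\in(\theta_\infty,0)$ is the real point of the lemma and the main obstacle: here $\psi_\theta$ is super-critical, so Theorem \ref{theo:tildeP}(ii) cannot be used with $\psi$ as base and $q=\theta<0$. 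The fix is to bridge through $\bar\theta$, the unique positive real with $\psi(\bar\theta)=\psi(\theta)$. First I would apply Theorem \ref{theo:tildeP}(ii) with base $\psi_\theta$ and $q=\bar\theta-\theta>0$, which is admissible because
\[
\psi_\theta(\bar\theta-\theta)=\psi(\bar\theta)-\psi(\theta)=0\geq 0,
\]
and which yields, using $(\psi_\theta)_{\bar\theta-\theta}=\psi_{\bar\theta}$, the identity $\bN^{\psi_{\bar\theta}}=\ind_{\{\sigma_0<\infty\}}\bN^{\psi_\theta}$. Second, with base $\psi$ and $q=\bar\theta>0$ (admissible since $\psi(\bar\theta)=\psi(\theta)>0$ by strict convexity and criticality), Theorem \ref{theo:tildeP}(ii) gives
\[
\bN^{\psi_{\bar\theta}}[F(\crr_0)]=\bN^\psi[F(\crr_0)\expp{-\psi(\bar\theta)\sigma_0}].
\]
Chaining these two absolute-continuity relations and using $\psi(\bar\theta)=\psi(\theta)$ finishes the $\theta<0$ case.

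The only subtle point in this plan is the algebraic identity $(\psi_\theta)_{\bar\theta-\theta}=\psi_{\bar\theta}$ that links the two Girsanov steps; apart from this, both steps are direct applications of Theorem \ref{theo:tildeP}(ii), the criticality of $\psi$ ensuring that $\sigma_0<\infty$ $\bN^\psi$-a.e. so that the indicator functions on the right-hand sides can be freely inserted or suppressed.
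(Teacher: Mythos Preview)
Your proposal is correct and follows essentially the same approach as the paper's proof: the first equality is the compatibility property, the second for $\theta\geq 0$ is Theorem~\ref{theo:tildeP}(ii) directly, and for $\theta<0$ you bridge through $\bar\theta$ via two applications of that same theorem, using $(\psi_\theta)_{\bar\theta-\theta}=\psi_{\bar\theta}$ and $\psi(\bar\theta)=\psi(\theta)$. The only cosmetic difference is that you restrict the negative case to $\theta\in(\theta_\infty,0)$, whereas the lemma allows $\theta=\theta_\infty$ when $\theta_\infty\in\Theta$; your argument works verbatim there too since $\bar\theta_\infty$ is then finite.
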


\begin{pf}
The first equality is just the ``compatibility property'' stated at the
beginning of this section.

For $\theta\geq0$, the second equality is a direct consequence of
(ii) from Theorem~\ref{theo:tildeP}.

For $\theta<0$, let $q=\bar\theta-\theta$. Notice
that $\psi_\theta(q)=\psi(\bar\theta)-\psi(q)=0$ and\break
$(\psi_\theta)_q=\psi_{\bar\theta}$. We deduce from (ii) of Theorem
\ref{theo:tildeP} that
\[
\bN^{\psi_{\bar\theta}} [F(\crr_0) ]=
\bN^{\psi_\theta} \bigl[F(\crr_0)\ind_{\{\sigma_0<\infty\}
} \bigr].
\]
Since $\bar\theta>0$ and $\psi(\theta)=\psi(\bar\theta)$, we get from
(2) of Theorem \ref{theo:tildeP} that
\[
\bN^{\psi_{\bar
\theta}} [F(\crr_0) ]
=\bN^{\psi} \bigl[F(\crr_0)\mathrm{e}^{-\psi(\bar\theta) \sigma
_0} \bigr]=
\bN^{\psi} \bigl[F(\crr_0)\mathrm{e}^{-\psi(\theta) \sigma
_0} \bigr].
\]
This ends the proof.
\end{pf}

We deduce directly from this lemma the following result on the
conditional distribution of the exploration process knowing the total
mass of the CRT.

\begin{cor}
\label{cor:r|s}
$\!\!\!$Let $\psi$ be a branching mechanism with parameter $(\alpha,\beta,
\pi)$ such that \reff{eq:Hyp} holds. The distribution of
$(\rho^{\theta+\gamma}, \gamma\geq0)$ conditionally on
$\{\sigma_\theta=r\}$ does not depend on $\theta\in\Theta$.
\end{cor}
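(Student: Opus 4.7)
The plan is to derive this corollary directly from Lemma~\ref{lem:Nrs} by a disintegration argument. First I would invoke Remark~\ref{rem:critique} to reduce to the case where $\psi$ is critical: the shift of $\theta$ by $q^*$ merely re-indexes the family $\crr$ and does not affect the conditional law in question. From now on assume $\psi$ is critical, so that Lemma~\ref{lem:Nrs} is available for every $\theta\in\Theta$ and $\sigma_0<\infty$ holds $\bN^\psi$-a.e.

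Next I would apply Lemma~\ref{lem:Nrs} with a product test function $F(\crr)=G(\crr)H(\sigma_0)$, where $G$ is a bounded measurable function on the state space of $\crr_0$, $H$ is a bounded measurable function on $[0,+\infty)$, and $\sigma_0$ denotes the total mass of the first coordinate of the argument. Since the first coordinate of $\crr_\theta=(\rho^{\theta+\gamma},\gamma\ge 0)$ is $\rho^\theta$, whose total mass is $\sigma_\theta$, one has $F(\crr_\theta)=G(\crr_\theta)H(\sigma_\theta)$ and $F(\crr_0)=G(\crr_0)H(\sigma_0)$. Lemma~\ref{lem:Nrs} then yields
\[
\bN^\psi\bigl[G(\crr_\theta)H(\sigma_\theta)\ind_{\{\sigma_\theta<\infty\}}\bigr]
=\bN^\psi\bigl[G(\crr_0)H(\sigma_0)\expp{-\psi(\theta)\sigma_0}\bigr].
\]
Specializing to $G\equiv 1$ shows that the law of $\sigma_\theta$ under $\bN^\psi$, restricted to $\{\sigma_\theta<\infty\}$, admits the density $r\mapsto\expp{-\psi(\theta)r}$ with respect to the law of $\sigma_0$.

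To conclude I would disintegrate both sides of the preceding identity with respect to the mass variable. Using the two displays, for every bounded measurable $H$,
\[
\int H(r)\,\bN^\psi\bigl[G(\crr_\theta)\,\big|\,\sigma_\theta=r\bigr]\expp{-\psi(\theta)r}\,\bN^\psi[\sigma_0\in dr]
=\int H(r)\,\bN^\psi\bigl[G(\crr_0)\,\big|\,\sigma_0=r\bigr]\expp{-\psi(\theta)r}\,\bN^\psi[\sigma_0\in dr],
\]
whence $\bN^\psi[G(\crr_\theta)\mid\sigma_\theta=r]=\bN^\psi[G(\crr_0)\mid\sigma_0=r]$ for $\bN^\psi[\sigma_0\in dr]$-almost every $r$, and the right-hand side does not depend on $\theta$. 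The one technicality I expect is ensuring that the exceptional null set in $r$ can be chosen independently of $G$; this is handled in the standard way by restricting $G$ to a countable measure-determining class of bounded cylinder functions on the path space, and the existence of the regular conditional distributions needed for the disintegration follows from the fact that the underlying space is standard Borel.
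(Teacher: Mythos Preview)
Your proposal is correct and is essentially the paper's own argument spelled out in detail: the paper states only that the corollary is deduced ``directly'' from Lemma~\ref{lem:Nrs}, and your disintegration of the identity $\bN^\psi[F(\crr_\theta)\ind_{\{\sigma_\theta<\infty\}}]=\bN^\psi[F(\crr_0)\expp{-\psi(\theta)\sigma_0}]$ against the law of the total mass is exactly what that deduction amounts to. The reduction to the critical case via Remark~\ref{rem:critique} and the handling of the null-set technicality are both appropriate.
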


From this point forward, we assume that $\psi$ is CRITICAL and that
$\theta_\infty<0$.
The first assumption is not restrictive thanks to Remark \ref{rem:critique}.

Notice that $\rho^\theta$ codes for a critical (resp., sub-critical,
resp., super-critical) CRT if $\theta=0$ (resp., $\theta>0$,
resp., $\theta<0$). In particular, we have $\sigma_\theta<+\infty$
a.s. if $\theta\geq0$.

We consider the explosion time
\[
A=\inf\{\theta\in\Theta, \sigma_\theta<+\infty\},
\]
with the convention that $\inf\varnothing= \theta_\infty$.
In particular, we
have $A\leq0$ $\bP^\psi_x$-a.s. and $\bN^\psi$-a.e.
Moreover, since the process $(\sigma_\theta,\theta\in\Theta)$ is
c\`{a}d-l\`{a}g, we have, on $\{A>\theta_\infty\}$,
$\sigma_\theta=+\infty$ for every $\theta<A$ and
$\sigma_\theta<+\infty$ for every $\theta>A$.
For the time reversed process, $A$ is the random time at which the tree
gets an infinite mass.

We first give a lemma on the conditional distribution of $\sigma$.
\begin{lem}
\label{lem:sigmaq|rho}
Let $q\in\Theta$, $q\leq\theta$. We have, for
$\lambda\geq0$,
\[
\bN^\psi[\mathrm{e}^{-\lambda\sigma_q}|\rho^\theta]=\mathrm
{e}^{- \sigma _\theta \psi_\theta(\psi^{-1}_q(\lambda))}
\]
and $\bN^\psi[\sigma_q<+\infty|\rho^\theta]=\mathrm
{e}^{- \sigma_\theta \psi_\theta(\bar q-q)}$, where $\bar q=\psi
^{-1}(\psi(q))$.
\end{lem}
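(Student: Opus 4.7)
The plan is to compute the conditional Laplace transform $\bN^\psi[\expp{-\lambda\sigma_q}|\rho^\theta]$ by decomposing $\sigma_q$ as $\sigma_\theta$ plus the masses of the independent subtrees that one grafts onto $\rho^\theta$ to recover $\rho^q$. This decomposition is supplied by the Special Markov Property (Theorem \ref{thm:special_Markov}), applied after the shift $\gamma\mapsto q+\gamma$ allowed by Corollary \ref{cor:compR}: under $\bN^\psi$, the pair $(\rho^q,\rho^\theta)$ has the same law as $(\rho^0,\rho^{\theta-q})$ under $\tilde\N^{\psi_q}$, and when $\psi_q$ is (sub)critical (i.e.\ $q\ge 0$) the theorem applies verbatim. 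For $q<0$, I would extend the Special Markov Property via the Girsanov relation of Theorem \ref{theo:tildeP}(2) applied with $\psi_q$ as base and $\bar q - q$ as change parameter, since $(\psi_q)_{\bar q-q}=\psi_{\bar q}$ is sub-critical.

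Once the decomposition is in place, Campbell's formula for Poisson point measures yields
\[
\bN^\psi[\expp{-\lambda\sigma_q}\,|\,\rho^\theta]=\expp{-\lambda\sigma_\theta}\exp\left(-\int_0^{+\infty}Z^\theta_h\,dh\;I(\lambda)\right),
\]
where
\[
I(\lambda)=2\beta(\theta-q)\N^{\psi_q}[1-\expp{-\lambda\sigma}]+\int\pi^{(q)}(dr)(1-\expp{-(\theta-q)r})\E^{\psi_q}_r[1-\expp{-\lambda\sigma}].
\]
Using Lemma \ref{lem:Nexps} (to write $\N^{\psi_q}[1-\expp{-\lambda\sigma}]=\psi_q^{-1}(\lambda)=:u$ and $\E^{\psi_q}_r[1-\expp{-\lambda\sigma}]=1-\expp{-ru}$) and the identity $\int_0^\infty Z^\theta_h\,dh=\sigma_\theta$, the exponent reduces to $\sigma_\theta\bigl[\psi_q(u)+2\beta(\theta-q)u+\int\pi^{(q)}(dr)(1-\expp{-(\theta-q)r})(1-\expp{-ru})\bigr]$. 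A direct expansion of the L\'evy--Khintchine form of $\psi$, using $\pi^{(q)}(dr)=\expp{-qr}\pi(dr)$ together with the factorization
\[
(\expp{-\theta r}-\expp{-qr})(\expp{-ur}-1)=\expp{-qr}(1-\expp{-(\theta-q)r})(1-\expp{-ur}),
\]
shows that the bracket equals $\psi_\theta(u)$, which proves the first identity.

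The second formula follows by letting $\lambda\downarrow 0$ in the first: the left side tends to $\bN^\psi[\sigma_q<+\infty\,|\,\rho^\theta]$ by monotone convergence, and on the right $\psi_q^{-1}(\lambda)\to \psi_q^{-1}(0)$. To identify this limit as $\bar q-q$, observe that $\psi_q(r)=0$ is equivalent to $\psi(q+r)=\psi(q)$; by strict convexity of $\psi$ this has exactly two non-negative solutions $r=0$ and $r=\bar q-q$, and $\psi_q^{-1}(0)$, being the supremum of this set, equals $\bar q-q$.

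The main obstacle is justifying the Special Markov decomposition in the super-critical regime $q<0$, where Theorem \ref{thm:special_Markov} is stated only for (sub)critical underlying mechanisms. I expect the cleanest route is to first prove the identity on $\{\sigma_q<+\infty\}$ via the Girsanov change of measure of Theorem \ref{theo:tildeP}(2), which reduces to the sub-critical case $\psi_{\bar q}$, and then to extend to the full statement by the projective limit construction of Section \ref{sec:superCRT}. The rest of the argument is a direct Poisson computation combined with the $\psi$--$\psi_q$--$\psi_\theta$ algebraic identity isolated above.
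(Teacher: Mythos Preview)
Your strategy and the Poisson computation are correct: decomposing $\sigma_q$ as $\sigma_\theta$ plus the masses of the grafted subtrees, applying Campbell's formula, and then using the algebraic identity $\psi_\theta(u)-\psi_q(u)=2\beta(\theta-q)u+\int\pi^{(q)}(dr)(1-\expp{-(\theta-q)r})(1-\expp{-ru})$ is exactly the right calculation. The paper reaches the same formula by the same mechanism.

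The execution differs, however, precisely at the point you flag. For $q<0$ the paper does \emph{not} pass to $\psi_{\bar q}$. Instead it works at finite level $a$ throughout: it writes $\bN^\psi[\expp{-\lambda\int_0^a Z^q_r\,dr}G(\pi_a(\rho^\theta))]$, shifts to $\N^{\psi_q}$, then uses the Girsanov definition \reff{eq:defNpsia} of $\N^{\psi_q,a}$ to land back under the \emph{critical} $\N^\psi$ (with the density $\expp{-qZ_a-\psi(q)\int_0^a Z_r\,dr}$). The Special Markov Property is then applied under $\N^\psi$ with pruning parameter $\theta'=\theta-q$, the Girsanov factor is undone, and only at the end does one let $a\to\infty$. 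This treats all $q\in\Theta$ uniformly and never requires a super-critical version of Theorem~\ref{thm:special_Markov}.

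Your proposed route via $\psi_{\bar q}$ can be made to work but is not quite as you describe. After Girsanov, the pruned process $\rho^{\theta-q}$ under $\N^{\psi_{\bar q}}$ has law $\N^{\psi_{\bar q+\theta-q}}$, \emph{not} $\N^{\psi_\theta}$; so once you have computed $\N^{\psi_{\bar q}}[\expp{-\lambda\sigma_0}\,|\,\rho^{\theta-q}]=\expp{-\sigma_{\theta-q}\,\psi_{\bar q+\theta-q}(\psi_{\bar q}^{-1}(\lambda))}$, you still owe a Radon--Nikodym factor $d\N^{\psi_{\bar q+\theta-q}}/d\N^{\psi_\theta}=\expp{-\psi_\theta(\bar q-q)\sigma}$ (from Lemma~\ref{lem:X_pruned} or Corollary~\ref{cor:Pbpsiq}) to convert the answer into a conditional expectation under $\bN^\psi$ given $\rho^\theta$. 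The two exponents then combine, via $\psi_{\bar q}^{-1}(\lambda)=\psi_q^{-1}(\lambda)-(\bar q-q)$, to give $\psi_\theta(\psi_q^{-1}(\lambda))$ as desired. Without this extra density step your argument on $\{\sigma_q<+\infty\}$ is incomplete, and the ``projective limit'' extension you mention is in fact the paper's finite-level-$a$ argument.
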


\begin{pf}
Let $\lambda>0$ and $F$ be a nonnegative measurable function defined on~$\cw$. We write $Z_a^q$ for the local time at level $a$ of the
exploration
process~$\rho^q$. Using \reff{eq:def-sigma}, we have
%
%
\begin{equation}
\label{eq:defI}
\bN^\psi[\mathrm{e}^{-\lambda\sigma_q}F(\rho^\theta)]=\lim
_{a\rightarrow\infty}
\bN^\psi[\mathrm{e}^{-\lambda\int_0^a Z_r^q\, dr }F(\rho^\theta)].
\end{equation}
We set
\[
I_a=\bN^\psi[\mathrm{e}^{-\lambda\int_0^a Z_r^q\, dr }F(\rho
^\theta)].
\]
Let
$G(\pi_a(\rho^\theta))=\bE^\psi[F(\rho^\theta)|\pi_a(\rho
^\theta)]$.
We
have, with $\theta'=\theta-q\geq0$,
\begin{eqnarray*}
I_a
&=& \bN^\psi[\mathrm{e}^{-\lambda\int_0^a Z_r^q\, dr }G(\pi_a(\rho
^\theta))]\\
&=& \bN^{\psi_q} [\mathrm{e}^{-\lambda\int_0^a Z_r^0\, dr }G(\pi
_a(\rho
^{\theta'}))]\\
&=& \bN^{\psi} \bigl[\mathrm{e}^{-q Z_a^0-(\psi(q)+\lambda) \int_0^a
Z_r^0\, dr}
G(\pi_a(\rho^{\theta'}))\bigr]\\
&=& \bN^{\psi} \bigl[\mathrm{e}^{-q Z_a^{\theta'} -(\psi(q)+\lambda)
\int_0^a Z_r^{\theta'} \, dr - \int_0^a K_h^a Z_h^{\theta'}\, dh }
G(\pi_a(\rho^{\theta'}))\bigr],
\end{eqnarray*}
where for the first equality we conditioned with respect to
$\sigma(\pi_a(\rho^{q}))$, used Girsanov's
formula for the third equality and Theorem \ref{thm:special_Markov} for
the last equality with
\begin{eqnarray*}
K_h^a&=&2\beta\theta'\N^{\psi}\bigl [1-\mathrm{e}^{-qZ_{a-h} -
(\psi (q)+\lambda) \int_0^{a-h} Z_r \, dr }  \bigr]\\
&&{}+ \int_{(0,+\infty)}
\pi(du)(1-\mathrm{e}^{-\theta'u})\E_u^{\psi}\bigl [1-\mathrm
{e}^{-qZ_{a-h} - (\psi(q)+\lambda) \int_0^{a-h} Z_r \, dr }
 \bigr] .
\end{eqnarray*}
We set
\begin{eqnarray*}
\tilde K_h^a&=&2\beta\theta'\N^{\psi} \bigl[\mathrm{e}^{-qZ_{a-h} -
\psi(q) \int_0^{a-h} Z_r \, dr } (1- \mathrm{e}^{-\lambda\int
_0^{a-h} Z_r\, dr })
 \bigr]\\
&&{}+ \int_{(0,+\infty)}
\pi(du)(1-\mathrm{e}^{-\theta'u})\E_u^{\psi} \bigl[\mathrm
{e}^{-qZ_{a-h} -\psi(q) \int_0^{a-h} Z_r\, dr } (1- \mathrm
{e}^{-\lambda\int_0^{a-h} Z_r \, dr })
 \bigr] .
\end{eqnarray*}
Using again Theorem \ref{thm:special_Markov} and Girsanov's formula,
we get
%
\begin{eqnarray}
\label{eq:Ia}
I_a
&=& \bN^{\psi} \bigl[\mathrm{e}^{-q Z_a^0-\psi(q)\int_0^a Z_r^0\, dr}
\mathrm{e}^{-\int_0^a (\tilde K_h^a+\lambda) Z_h^{\theta'}\, dh
}G(\pi_a(\rho^{\theta'})) \bigr]
\nonumber\\
&=& \bN^{\psi_q}\bigl [\mathrm{e}^{-\int_0^a (\tilde K_h^a+\lambda)
Z_h^{\theta'}\, dh }G(\pi_a(\rho^{\theta'}))\bigr]
\nonumber
\\[-8pt]
\\[-8pt]
&=& \bN^{\psi} \bigl[\mathrm{e}^{-\int_0^a (\tilde K_h^a+\lambda)
Z_h^{\theta}\, dh }G(\pi_a(\rho^{\theta}))\bigr]\nonumber\\
&=& \bN^{\psi} \bigl[\mathrm{e}^{-\int_0^a (\tilde K_h^a+\lambda)
Z_h^{\theta}\, dh }F(\rho^{\theta})\bigr].
\nonumber
\end{eqnarray}
Notice also that, thanks to Girsanov's formula,
\begin{eqnarray*}
\tilde K_h^a
&=& 2\beta\theta'\N^{\psi_q} [1- \mathrm{e}^{-\lambda\int
_0^{a-h} Z_r \, dr } ]\\
&&{}+ \int_{(0,+\infty)}
\pi(du)(\mathrm{e}^{-qu} -\mathrm{e}^{-\theta u})\E_u^{\psi
_q} [ 1- \mathrm{e}^{-\lambda \int_0^{a-h} Z_r \, dr } ]
\\
&=& 2\beta\theta'\bN^{\psi} [1- \mathrm{e}^{-\lambda\int
_0^{a-h} Z_r^q \, dr } ]\\
&&{}+ \int_{(0,+\infty)}
\pi(du)(\mathrm{e}^{-qu} -\mathrm{e}^{-\theta u})\bE_u^{\psi}
[ 1- \mathrm{e}^{-\lambda \int_0^{a-h} Z_r^q \, dr } ] .
\end{eqnarray*}
Using Lemma \ref{lem:Nexps}, we get
\begin{eqnarray*}
\lim_{a\rightarrow\infty} \tilde K_h^a
&=& 2\beta
\theta'\bN^{\psi} [1- \mathrm{e}^{-\lambda\sigma_q }
]+ \int
_{(0,+\infty)}
\pi(du)(\mathrm{e}^{-qu} -\mathrm{e}^{-\theta u})\bE_u^{\psi}
[ 1- \mathrm{e}^{-\lambda \sigma_q } ] \\
&=& \psi_\theta(\psi^{-1}_q(\lambda)) -
\psi_q(\psi^{-1}_q(\lambda))\\
&=& \psi_\theta(\psi^{-1}_q(\lambda))-\lambda.
\end{eqnarray*}

We deduce from \reff{eq:defI} and \reff{eq:Ia} that
\[
\bN^\psi[\mathrm{e}^{-\lambda\sigma_q}F(\rho^\theta)]=
\bN^\psi\bigl[\mathrm{e}^{-\psi_\theta(\psi^{-1}_q(\lambda)) \sigma
_\theta }F(\rho^\theta)\bigr].
\]
Letting then $\lambda$ go down to 0, we deduce, with $\bar
q=\psi^{-1}(\psi(q))$, that
\[
\bN^\psi\bigl[\ind_{\{\sigma_q<+\infty\}}F(\rho^\theta)\bigr]=
\bN^\psi\bigl[\mathrm{e}^{-\psi_\theta(\bar q-q ) \sigma_\theta
}F(\rho^\theta)\bigr].
\]
\upqed
\end{pf}

The next theorem gives the distribution of the explosion time $A$
under the measure $\bN^\psi$. Recall the definition of $\bar\theta$ in
\reff{eq:defbt} and \reff{eq:defbqi}.

\begin{theo}
\label{theo:law_A}
We have, for all $\theta\in[\theta_\infty,+\infty)$,
%
%
\begin{equation}
\label{eq:fct-rep-A}
\bN^\psi[A>\theta]=\bar\theta-\theta
\end{equation}
and
\[
\bN^\psi[A=\theta_\infty]=
\cases{\displaystyle
0 ,&\quad  if  $\theta_\infty\notin\Theta'$,\cr\displaystyle
+\infty,&\quad  if  $\theta_\infty\in\Theta'$.
}
\]
\end{theo}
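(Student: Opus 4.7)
The plan is to first establish $\bN^\psi[A>\theta]=\bar\theta-\theta$ for $\theta\in[\theta_\infty,+\infty)$. Since the process $\Sigma=(\sigma_\theta,\theta\in\Theta)$ is non-increasing and c\`adl\`ag, one has $\{A>\theta\}=\{\sigma_\theta=+\infty\}$ for every $\theta\in\Theta$. The compatibility relation defining $\bN^\psi$ as a projective limit identifies the law of $\rho^\theta$ under $\bN^\psi$ with $\bN^{\psi_\theta}$, so $\bN^\psi[A>\theta]=\bN^{\psi_\theta}[\sigma=+\infty]$. Applying Lemma \ref{lem:Nexps} to $\psi_\theta$ gives $\bN^{\psi_\theta}[1-\exp(-\lambda\sigma)]=\psi_\theta^{-1}(\lambda)$; dominated convergence as $\lambda\downarrow 0$ (with integrable dominant $1-\exp(-\lambda_0\sigma)$ for some $\lambda_0>0$) yields $\bN^{\psi_\theta}[\sigma=+\infty]=\psi_\theta^{-1}(0)$, the largest non-negative root of $\psi_\theta$. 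By criticality and strict convexity of $\psi$, this root equals $0$ when $\theta\geq 0$ and $\bar\theta-\theta$ when $\theta<0$, since the roots of $\psi_\theta(u)=\psi(u+\theta)-\psi(\theta)=0$ are $u=0$ and $u=\bar\theta-\theta$. The endpoint $\theta=\theta_\infty$ follows by monotone convergence $\{A>\theta\}\uparrow\{A>\theta_\infty\}$ as $\theta\downarrow\theta_\infty$, combined with the definition \reff{eq:defbqi} of $\bar\theta_\infty$.

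For the atom at $\theta_\infty$, fix $\theta\in(\theta_\infty,0)$ and condition on $\rho^\theta$. Lemma \ref{lem:sigmaq|rho} applied with $q=\theta'\in(\theta_\infty,\theta]$ gives
\[
\bN^\psi[\sigma_{\theta'}<+\infty\mid\rho^\theta]=\exp\bigl(-\sigma_\theta\,\psi_\theta(\bar{\theta'}-\theta')\bigr).
\]
Since $\{\sigma_{\theta'}<+\infty\}=\{A\leq\theta'\}$ decreases to $\{A=\theta_\infty\}$ as $\theta'\downarrow\theta_\infty$, bounded convergence applied to these conditional probabilities and the definition \reff{eq:defbqi} of $\bar\theta_\infty$ yield
\[
\bN^\psi[A=\theta_\infty\mid\rho^\theta]=\exp\bigl(-\sigma_\theta\,\psi_\theta(\bar\theta_\infty-\theta_\infty)\bigr).
\]

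It remains to case-split. If $\theta_\infty\notin\Theta'$, Lemma \ref{lem:limyqi} gives $\bar\theta_\infty=+\infty$ and hence $\psi_\theta(\bar\theta_\infty-\theta_\infty)=+\infty$; since $\sigma_\theta>0$ $\bN^\psi$-a.e.\ (excursion measures put no mass on trivial excursions), the conditional probability vanishes a.e., whence $\bN^\psi[A=\theta_\infty]=0$. If $\theta_\infty\in\Theta'$, set $c=\psi_\theta(\bar\theta_\infty-\theta_\infty)$, which is finite (as $\bar\theta_\infty<+\infty$ in this case) and strictly positive ($\bar\theta_\infty-\theta_\infty$ strictly exceeds the larger root $\bar\theta-\theta$ of $\psi_\theta$, since $\theta\mapsto\bar\theta-\theta$ is strictly decreasing by convexity of $\psi$). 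Then $\bN^\psi[A=\theta_\infty]=\bN^{\psi_\theta}[\exp(-c\sigma)]=+\infty$, because Lemma \ref{lem:Nexps} gives $\bN^{\psi_\theta}[\sigma>0]=\lim_{\lambda\to+\infty}\psi_\theta^{-1}(\lambda)=+\infty$ while $\bN^{\psi_\theta}[\sigma\geq\epsilon]<+\infty$ for every $\epsilon>0$, so short excursions accumulate infinite mass. The main obstacle is the handling of conditional expectations under the $\sigma$-finite, infinite-mass measure $\bN^\psi$; this is legitimate because the push-forward of $\bN^\psi$ onto $\sigma(\rho^\theta)$ is the $\sigma$-finite measure $\bN^{\psi_\theta}$, so the disintegration is well-defined.
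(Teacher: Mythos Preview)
Your proof is correct and follows essentially the same approach as the paper's: both compute $\bN^\psi[A>\theta]$ via $\bN^{\psi_\theta}[\sigma=+\infty]=\psi_\theta^{-1}(0)=\bar\theta-\theta$, and both handle the atom at $\theta_\infty$ by conditioning and applying Lemma~\ref{lem:sigmaq|rho}. The only difference is cosmetic: the paper conditions on $\rho=\rho^0$ (so that $\sigma_0<+\infty$ a.e., $\psi$ being critical) while you condition on $\rho^\theta$ for $\theta\in(\theta_\infty,0)$, which forces you to check that $c=\psi_\theta(\bar\theta_\infty-\theta_\infty)>0$ and to argue that $\N^{\psi_\theta}[\expp{-c\sigma}]=+\infty$ from the infinite total mass of the excursion measure; both routes are valid.
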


\begin{pf}
We have for all $\theta>\theta_\infty$
\begin{eqnarray*}
\bN^\psi[A>\theta]
&=& \bN^\psi[\sigma_\theta=+\infty]\\
&=& \N^{\psi_\theta}[\sigma=+\infty]\\
&=& \lim_{\lambda\to
0}\N^{\psi_\theta} [1-\mathrm{e}^{-\lambda\sigma} ]\\
&=& \lim_{\lambda\to
0}\psi_\theta^{-1}(\lambda)\\
&=& \psi_\theta^{-1}(0),
\end{eqnarray*}
where we used \reff{lem:Nexps} for the fourth equality.
We get, for $t>0$,
\[
\psi_\theta(t)=0 \quad \iff \quad \psi(t+\theta)=\psi(\theta) \quad
\iff\quad
t+\theta=\bar\theta,
\]
and thus $\psi_\theta^{-1}(0)=\bar\theta-\theta$, which gives the
first part of the theorem for $\theta>\theta_\infty$. Making $\theta$
decrease to $\theta_\infty$ gives the result for $\theta_\infty$.

For the second part of the theorem, we apply the second assertion of
Lemma~\ref{lem:sigmaq|rho} with $\theta=0$.
We have, for every $q\le0$,
\[
\bN^\psi[\sigma_q<+\infty|\rho]=\mathrm{e}^{-\sigma\psi(\bar q-q)}.
\]
Then we have
\begin{eqnarray*}
\bN^\psi[A=\theta_\infty|\rho]
&=& \bN^\psi[\forall q>\theta
_\infty,
\sigma_q<+\infty|\rho]\\
&=& \lim_{q\to\theta_\infty}\bN^\psi[\sigma_q<+\infty|\rho]\\
&=& \lim_{q\to\theta_\infty}\mathrm{e}^{-\sigma\psi(\bar q-q)}\\
&=& %
\cases{\displaystyle
0 ,&\quad  if  $\theta_\infty\notin\Theta'$,\cr\displaystyle
\mathrm{e}^{-\sigma\psi(\bar\theta_\infty-\theta_\infty)} ,&\quad
 if
 $\theta_\infty\in\Theta'$,  with  $\psi(\bar\theta_\infty
-\theta_\infty)<+\infty$,
}
\end{eqnarray*}
where the last equality is a consequence of Lemma \ref{lem:limyqi}.
Then integrating with respect to $\rho$ gives the theorem.
\end{pf}

\begin{rem}\label{rem:densityA}
Since $\psi^{-1}$ is smooth, we deduce that the mapping $q\mapsto\bar
q$ is differentiable with
\[
\frac{d\bar q}{dq}=\frac{\psi'(q)}{\psi'(\bar q)}.
\]
Thus, when $\theta_\infty\notin\Theta$, we
have that the law of $A$ under $\bN^\psi$ has a density with respect to
the Lebesgue measure on $\R$ given by
\[
\ind_{\{r\in(\theta_\infty,0)\}} \biggl(1-\frac{\psi'(r)}{\psi
'(\bar
r)} \biggr).
\]
\end{rem}

\begin{theo}
\label{theo:loi-sigmaA}
 \textup{(i)}  Let $\theta\in(\theta_\infty,0)$. Under $\bN^\psi$,
conditionally on $\{A=\theta\}$,
we have for any nonnegative measurable function $F$
%
%
\begin{equation}
\label{eq:bNR|A}
\bN^\psi[F(\crr_{A})|A=\theta]=\psi'(
\bar\theta)\bN^\psi\bigl[F(\crr_0) \sigma_0 \mathrm{e}^{- \psi
(\theta) \sigma_0 }\bigr],
\end{equation}
and the law of $\sigma_A$ is given by the following: for $\lambda\geq0$
\[
\bN^\psi[\mathrm{e}^{-\lambda\sigma_A}|A=\theta]=\frac{\psi'(
\bar\theta)}{\psi'(\psi^{-1}(\lambda+\psi(\theta)))}.
\]
In particular, we have
\[
\bN^\psi[\sigma_A<\infty|A=\theta]=1.
\]

 \textup{(ii)}  If $\theta_\infty\in\Theta$, we have for any nonnegative
measurable function $F$
%
%
\begin{equation}
\bN^\psi \bigl[F(\crr_{A})\ind_{\{A=\theta_\infty\}} \bigr]=\bN
^{\psi_{\bar\theta_\infty}} [F(\crr_{0}) ].\vadjust{\goodbreak}
\end{equation}
In particular, the law of $\sigma_A$ on the event
$\{A=\theta_\infty\}$ is given by
\[
\bN^\psi \bigl[ (1-\mathrm{e}^{-\lambda\sigma_A} )\ind
_{\{
A=\theta_\infty\}} \bigr]=\psi^{-1} \bigl(\lambda+\psi(\theta
_\infty) \bigr)-\bar\theta_\infty.
\]
\end{theo}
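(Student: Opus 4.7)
The plan is to treat (ii) first, since it follows essentially by specializing formulas already derived, and then to extract (i) by differentiating the cumulative law of $A$ with respect to the parameter.

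For (ii), the monotonicity $\sigma_{\theta'}\geq \sigma_\theta$ for $\theta'\leq \theta$ (inherited from the pruning construction) shows that, when $\theta_\infty\in\Theta$, the event $\{A=\theta_\infty\}$ coincides with $\{\sigma_{\theta_\infty}<+\infty\}$, and on this event $\crr_A=\crr_{\theta_\infty}$. Lemma \ref{lem:Nrs} with $\theta=\theta_\infty$, combined with $\psi(\bar\theta_\infty)=\psi(\theta_\infty)$, $\bar\theta_\infty>0$, and Corollary \ref{cor:Pbpsiq}, gives
\[
\bN^\psi\left[F(\crr_A)\ind_{\{A=\theta_\infty\}}\right]
=\bN^\psi\left[F(\crr_0)\expp{-\psi(\theta_\infty)\sigma_0}\right]
=\bN^{\psi_{\bar\theta_\infty}}\left[F(\crr_0)\right].
\]
The Laplace-transform identity on $\{A=\theta_\infty\}$ follows by choosing $F(\crr_0)=1-\expp{-\lambda\sigma_0}$ and applying Lemma \ref{lem:Nexps} to $\psi_{\bar\theta_\infty}$, noting that $\psi_{\bar\theta_\infty}^{-1}(\lambda)=\psi^{-1}(\lambda+\psi(\theta_\infty))-\bar\theta_\infty$.

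For (i), fix $\theta\in(\theta_\infty,0)$ and set $\theta'=\theta-\varepsilon$ with $\varepsilon>0$ small. Since $\{A\leq\mu\}=\{\sigma_\mu<+\infty\}$ for every $\mu\in\Theta$, one writes $\{\theta'<A\leq\theta\}=\{\sigma_\theta<+\infty\}\setminus\{\sigma_{\theta'}<+\infty\}$. The Markov property of $(\rho^\mu,\mu\in\Theta)$ together with Lemma \ref{lem:sigmaq|rho} gives $\bN^\psi[\ind_{\{\sigma_{\theta'}<+\infty\}}\mid\rho^\theta]=\expp{-\sigma_\theta\psi_\theta(\bar\theta'-\theta')}$; combining this with two applications of Lemma \ref{lem:Nrs} and the identity $\psi(\theta)+\psi_\theta(\bar\theta'-\theta')=\psi(\theta+\bar\theta'-\theta')$, one obtains
\[
\bN^\psi\left[F(\crr_\theta)\ind_{\{\theta'<A\leq\theta\}}\right]
=\bN^\psi\left[F(\crr_0)\left(\expp{-\psi(\theta)\sigma_0}-\expp{-\psi(\theta+\bar\theta'-\theta')\sigma_0}\right)\right].
\]
Using $d\bar q/dq=\psi'(q)/\psi'(\bar q)$ from Remark \ref{rem:densityA}, a Taylor expansion yields $\psi(\theta+\bar\theta'-\theta')-\psi(\theta)=\varepsilon(\psi'(\bar\theta)-\psi'(\theta))+O(\varepsilon^2)$, so dividing by $\varepsilon$ and applying dominated convergence (with majorant $C\sigma_0\expp{-\psi(\theta)\sigma_0}$, integrable thanks to $\bN^\psi[\sigma_0\expp{-\mu\sigma_0}]=1/\psi'(\psi^{-1}(\mu))$, obtained by differentiating Lemma \ref{lem:Nexps}) produces
\[
\lim_{\varepsilon\downarrow 0}\frac{1}{\varepsilon}\bN^\psi\left[F(\crr_\theta)\ind_{\{\theta-\varepsilon<A\leq\theta\}}\right]
=(\psi'(\bar\theta)-\psi'(\theta))\,\bN^\psi\left[F(\crr_0)\sigma_0\expp{-\psi(\theta)\sigma_0}\right].
\]

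To conclude, I exploit right-continuity of $\theta\mapsto\rho^\theta$ (inherited from the càdlàg nature of the pruning construction): on $\{\theta-\varepsilon<A\leq\theta\}$ the trajectory $\crr_\theta$ is the time-shift of $\crr_A$ by $\theta-A\in[0,\varepsilon)$, and so $F(\crr_\theta)\to F(\crr_A)$ as $\varepsilon\downarrow 0$ for bounded continuous $F$. A uniform-integrability argument allows the replacement in the preceding limit, and the resulting quantity, divided by the density $p(\theta)=1-\psi'(\theta)/\psi'(\bar\theta)$ from Remark \ref{rem:densityA}, yields \reff{eq:bNR|A}; a monotone-class argument extends it to all non-negative measurable $F$. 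The Laplace transform of $\sigma_A$ follows by specializing to $F(\crr_0)=\expp{-\lambda\sigma_0}$ and invoking the formula $\bN^\psi[\sigma_0\expp{-\mu\sigma_0}]=1/\psi'(\psi^{-1}(\mu))$ with $\mu=\lambda+\psi(\theta)$, and sending $\lambda\downarrow 0$ gives $\bN^\psi[\sigma_A<+\infty\mid A=\theta]=1$. The principal obstacle is rigorously justifying the swap $F(\crr_\theta)\leftrightarrow F(\crr_A)$ inside the conditional expectation at the random level $\theta=A$: this relies on the càdlàg regularity of $\theta\mapsto\rho^\theta$ together with the dominated-convergence bound established above.
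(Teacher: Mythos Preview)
Your proof is correct and follows essentially the same route as the paper's: both compute $\bN^\psi[F(\crr_\theta)\ind_{\{A>q\}}]$ via Lemma \ref{lem:sigmaq|rho} and Lemma \ref{lem:Nrs}, differentiate in $q$ (you via an explicit $\varepsilon$-increment, the paper as a derivative evaluated at $q=\theta$), and then identify $F(\crr_\theta)$ with $F(\crr_A)$ using the right-continuity of $\theta\mapsto\sigma_\theta$; part (ii) is likewise identical in substance. The paper handles the swap $F(\crr_\theta)\leftrightarrow F(\crr_A)$ in one line (``using that $\sigma$ is right continuous''), whereas you spell out the dominated-convergence and monotone-class justification more carefully, which is a plus.
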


\begin{pf}
Let $F$ be a nonnegative measurable function defined on the state space
of $\crr_0$.
Using Lemma \ref{lem:sigmaq|rho}, we get for every $\theta_\infty
<q\le\theta<0$,
\begin{eqnarray*}
\bN^\psi\bigl[F(\crr_{\theta})\ind_{\{ A> q\}}\bigr]
&=& \bN^\psi\bigl[F(\crr_{\theta}) \ind_{\{\sigma_q=+\infty\}}\bigr]\\
&=& \bN^\psi\bigl[F(\crr_{\theta})\bN^\psi[\sigma_q=+\infty|\rho
^{\theta}]\bigr]\\
&=& \bN^\psi\bigl[F(\crr_{\theta})\bigl(1- \mathrm{e}^{- \sigma_\theta \psi
_\theta(\bar q-q)}\bigr)\bigr]\\
&=& \bN^\psi\bigl[F(\crr_{\theta})\bigl(1- \mathrm{e}^{- \sigma_\theta
(\psi(\theta+\bar q-q)-\psi(\theta))}\bigr)\bigr].
\end{eqnarray*}
Thus, we get that the mapping
\[
q\mapsto\bN^\psi\bigl[F(\crr_{\theta})\ind_{\{A> q\}}\bigr]
\]
is differentiable if it is finite. As $d\bar
q/dq = \psi'(q)/\psi'(\bar q)$, we get
\begin{eqnarray*}
&&\frac{d}{dq}\bN^\psi\bigl[F(\crr_{\theta})\ind_{\{A> q\}}\bigr]\\
&& \qquad = \psi'(\bar q-q+\theta) \biggl(\frac{d\bar
q}{dq}-1 \biggr) \bN^{\psi_\theta}\bigl[F(\crr_0) \sigma_0 \mathrm
{e}^{-\sigma_0 (\psi(\bar q-q+\theta)-\psi(\theta))}\bigr]\\
&& \qquad = \psi'(\bar q-q+\theta)\frac{\psi'(q)-\psi'(\bar q)}{\psi'(\bar q)}
\bN^{\psi_\theta}\bigl[F(\crr_0) \sigma_0 \mathrm{e}^{-\sigma_0 (\psi
(\bar q-q+\theta)-\psi(\theta))}\bigr].
\end{eqnarray*}
Finally, using that $\sigma$ is right continuous, we have
\begin{eqnarray*}
\frac{\bN^\psi[F(\crr_{A}), A\in d\theta] }{d\theta}
&=& -\frac{d}{dq} \bigl(\bN^{\psi}\bigl[F(\crr_{\theta})\ind_{\{ A>
q\}} \bigr] \bigr)_{|_{q=\theta}}\\
&=& \bigl(\psi'(\bar\theta)-\psi'(\theta) \bigr) \bN^{\psi
_\theta}\bigl[F(\crr_0)
\sigma_0\ind_{\{\sigma_0<+\infty\}} \bigr] .
\end{eqnarray*}
We deduce from Lemma \ref{lem:Nrs} that
\[
\bN^\psi[F(\crr_{A})|A=\theta]
=\frac{\bN^{\psi_\theta}[F(\crr_0)
\sigma_0\ind_{\{\sigma_0<+\infty\}} ] }{\bN^{\psi_\theta}[
\sigma_0\ind_{\{\sigma_0<+\infty\}} ] }
=\frac{\bN^\psi[F(\crr_0) \sigma_0 \mathrm{e}^{- \psi(\theta)
\sigma_0 }]}
{\bN^\psi[\sigma_0 \mathrm{e}^{- \psi(\theta) \sigma_0 }]}
.
\]
This proves \reff{eq:bNR|A} but for the normalizing constant. It also
implies that
\[
\bN^\psi[\mathrm{e}^{-\lambda\sigma_A} |A=\theta]
=\frac{\N^{\psi_\theta}[\sigma\mathrm{e}^{-\lambda\sigma}]}
{\N^{\psi_\theta}[\sigma\ind_{\{ \sigma<+\infty\}} ]}.
\]
Notice that $\psi_\theta^{-1}(r)=\psi^{-1}(r+\psi(\theta))-\theta
$ for
$r\geq0$. We get from Lemma~\ref{lem:Nexps} that, for $r\geq0$,
\[
\N^{\psi_\theta}[\sigma\mathrm{e}^{-r\sigma}]=
\frac{d}{dr}\N^{\psi_\theta}[1-\mathrm{e}^{-r\sigma}]=
(\psi^{-1}_\theta)'(r)= \frac{1}{\psi'(\psi^{-1}(r+\psi(\theta
)))}.
\]
In particular, we deduce the value of the normalizing constant,
\[
\bN^\psi\bigl[\sigma_0 \mathrm{e}^{- \psi(\theta) \sigma_0 }\bigr]= \N
^{\psi_\theta}\bigl[\sigma\ind_{\{
\sigma<+\infty\}} \bigr]= 1/ \psi'( \bar\theta).\vadjust{\goodbreak}
\]
We also get
\[
\bN^\psi[\mathrm{e}^{-\lambda\sigma_A}|A=\theta]
=\frac{\psi'( \bar\theta)}{\psi'(\psi^{-1}(\lambda+\psi(\theta
)))}.
\]
This ends the proof of the first part.

For the second part of the theorem, we consider the case $\theta
_\infty
\in\Theta$.
Let us first remark that, since
the process $(\sigma_\theta,\theta\in\Theta)$ is continuous in
probability, we have
\[
\{A=\theta_\infty\}=\{\sigma_{\theta_\infty}<+\infty\}.
\]

We then apply Girsanov's formula \reff{eq:Nrs} twice to get
\begin{eqnarray*}
\bN^\psi\bigl [F(\crr_A)\ind_{\{A=\theta_\infty\}} \bigr]
&=& \bN^\psi\bigl [F(\crr_{\theta_\infty})
\ind_{\{\sigma_{\theta_\infty}<+\infty\}} \bigr]\\
&=& \bN^\psi\bigl [F(\crr_0)\mathrm{e}^{-\psi(\theta_\infty
)\sigma _0} \bigr]\\
&=& \bN^\psi \bigl[F(\crr_0) \mathrm{e}^{-\psi(\bar\theta_\infty
)\sigma_0} \bigr]\\
&=& \bN^\psi \bigl[F(\crr_{\bar\theta_\infty})
\ind_{\{\sigma_{\bar\theta_\infty}<+\infty\}} \bigr]\\
&=& \bN^{\psi_ {\bar\theta_\infty}}  [F(\crr_0) ],
\end{eqnarray*}
where we used for the last equality that
$\sigma_{\bar\theta_\infty}<+\infty$ $\bN^\psi$-a.e. and \reff{eq:Nrs}.

For $F(\crr)=1-\mathrm{e}^{-\lambda\sigma}$, we obtain
\begin{eqnarray*}
\bN^\psi \bigl[ (1-\mathrm{e}^{-\lambda\sigma_A} )\ind
_{\{
A=\theta_\infty\}} \bigr]
&=& \bN^{\psi_{\bar\theta_\infty}} [1-\mathrm{e}^{-\lambda
\sigma _0} ]\\
&=& \psi_{\bar\theta_\infty}^{-1}(\lambda)\\
&=& \psi^{-1}\bigl(\lambda+\psi(\bar\theta_\infty)\bigr)-\bar\theta_\infty.
\end{eqnarray*}
\upqed
\end{pf}

We deduce the next corollary from \reff{eq:bNR|A}.

\begin{cor}
\label{cor:r|s2}
Let $\theta_\infty<\theta<0$.
The distribution of \mbox{$\crr_A=(\rho^{A+\gamma}, \gamma\geq0)$}
conditionally
on $\{\sigma_A=r, A=\theta\}$ does not depend on
$\theta$.
\end{cor}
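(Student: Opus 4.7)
The plan is to read off the joint law of $(\crr_A,\sigma_A)$ conditional on $\{A=\theta\}$ from the explicit formula \reff{eq:bNR|A}, and observe that the $\theta$-dependence is concentrated in a factor that depends only on $\sigma_A$ and therefore disappears upon conditioning on $\sigma_A$.

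First, note that $\sigma_A$ is measurable with respect to $\crr_A$ (it is the total mass of the CRT coded by the first component $\rho^A$ of $\crr_A$, cf.~\reff{eq:def-s}). Hence, for any non-negative measurable $F$ on the state space of $\crr_0$ and any non-negative measurable $g$ on $[0,+\infty)$, the function $\crr\mapsto F(\crr)g(\sigma(\crr))$ is an admissible test function. Applying \reff{eq:bNR|A} to it yields
\begin{equation}\label{eq:plan-joint}
\bN^\psi\left[F(\crr_A)\,g(\sigma_A)\,\bigl|\,A=\theta\right]
=\psi'(\bar\theta)\,\bN^\psi\left[F(\crr_0)\,g(\sigma_0)\,\sigma_0\,\expp{-\psi(\theta)\sigma_0}\right].
\end{equation}

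Next, I disintegrate the right-hand side with respect to the law of $\sigma_0$ under $\bN^\psi$. Setting $\mu(dr)=\bN^\psi[\sigma_0\in dr]$ on $(0,+\infty)$, \reff{eq:plan-joint} becomes
\begin{equation*}
\bN^\psi\left[F(\crr_A)\,g(\sigma_A)\,\bigl|\,A=\theta\right]
=\psi'(\bar\theta)\int_0^\infty g(r)\,r\,\expp{-\psi(\theta)r}\,\bN^\psi\!\left[F(\crr_0)\,\bigl|\,\sigma_0=r\right]\,\mu(dr).
\end{equation*}
Taking $F\equiv 1$ identifies the conditional density of $\sigma_A$ given $\{A=\theta\}$ as proportional to $r\,\expp{-\psi(\theta)r}\,\mu(dr)$; dividing one expression by the other (or, equivalently, letting $g$ approximate a Dirac mass at $r$) gives
\begin{equation*}
\bN^\psi\left[F(\crr_A)\,\bigl|\,\sigma_A=r,\ A=\theta\right]=\bN^\psi\left[F(\crr_0)\,\bigl|\,\sigma_0=r\right].
\end{equation*}
The right-hand side is manifestly independent of $\theta$, which is the claim. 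Moreover, by Corollary \ref{cor:r|s} this common value also equals the conditional law of $\crr_{\theta'}$ given $\{\sigma_{\theta'}=r\}$ under $\bN^\psi$, for any $\theta'\in\Theta$.

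The only subtle point is the passage from \reff{eq:plan-joint} to a genuine regular conditional distribution given $\sigma_A$: this requires a standard monotone-class / disintegration argument, which is available here since the state space of $\crr_0$ is Polish and $\sigma_A$ is real-valued, so the main work is already done once \reff{eq:plan-joint} is established.
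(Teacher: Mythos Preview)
Your argument is correct and is exactly the deduction the paper has in mind: it states that the corollary follows from \reff{eq:bNR|A} without further detail, and you have simply spelled out that deduction---namely, that on the right-hand side of \reff{eq:bNR|A} the $\theta$-dependence sits entirely in the factor $\psi'(\bar\theta)\,\sigma_0\expp{-\psi(\theta)\sigma_0}$, which is a function of $\sigma_0$ alone and hence drops out when one further conditions on $\sigma_A=\sigma_0=r$.
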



\section{Pruning of an infinite tree}\label{sec:infinite_tree}

We want here to define an infinite tree via a spinal description of
this tree. What we call a~spinal description of a~tree is
a~representation of the tree where a particular branch is considered (the
spine) and the subtrees that are grafted along that branch are then
described. The usual, well-known spinal descriptions of a CRT are Bismut
decomposition (see \cite{dlgpfalt}) where the spine is picked ``at
random'' among all the possible branches, and Williams decomposition
(see \cite{adwdlcrtseppnm}) where the spine is chosen to be the
highest branch of the tree. We describe next the Bismut decomposition
and show how such a~decomposition can uniquely define a~tree. Then we
define the infinite tree by such a decomposition.

\subsection{Bismut decomposition of a L\'{e}vy tree}
\label{sec:Bismut}
Let $\psi$ be a (sub)critical branching mechanism. Recall
the definition of the mark process $M^{(\mathrm{mark})}$ of\vadjust{\goodbreak} Section
\ref{sec:prunedp}. For a marked exploration process $(\rho,
M^{\mathrm{mark}})$ recall
that $\eta$ is defined by \reff{eq:eta} and notice that
$(\eta_{(\sigma-t)-}, M^{(\mathrm{mark})}_{\sigma-t}, t\in[0, \sigma])$ is
distributed as $(\rho, M^{(\mathrm{mark})})$ under the excursion measure thanks
to Corollary 3.1.6 in \cite{dlgrtlpsbp} and definition of
$M^{(\mathrm{mark})}$.

We recall that the family of pruned
exploration processes $\crr=(\rho^\theta,\theta\ge0)$ is constructed
from the exploration process $\rho$ (which is equal to $\rho^0$) and
the measure-valued process $M^{(\mathrm{mark})}$.

Let $T\ge0$. We
define under $\N^\psi$ the processes
$(\rho^{T\rightarrow},M^{(\mathrm{mark}),T\rightarrow})$ and
$(\rho^{\leftarrow T},M^{(\mathrm{mark}),\leftarrow T})$ by the following: for
every $t\ge0$,
\begin{eqnarray*}
\bigl(\rho^{T\rightarrow}_t,M^{(\mathrm{mark}),T\rightarrow}_t\bigr)
&=& \bigl(\rho
_{(T+t)\wedge\sigma},M^{(\mathrm{mark})}_{(T+t)\wedge\sigma}\bigr),\\
\bigl(\rho^{\leftarrow T}_t,M^{(\mathrm{mark}),\leftarrow T}_t\bigr)
&=& \bigl(\eta_{(T-t)\vee
0},M^{(\mathrm{mark})}_{(T-t)\vee0}\bigr),
\end{eqnarray*}
where $\rho$ is the canonical exploration process and $\eta$
its dual process.

Bismut decomposition describes in terms of Poisson point processes the former
processes when $T$ is
``uniformly distributed''
on $[0,\sigma]$.

First we must extend the definition of the measure
$\M^\psi(d\mu,d\nu)$ of \reff{def:mu_a} and \reff{def:nu_a} to
get the marks
into account. Let
\[
\cn(dx,d\ell,du)=\sum_{i\in I}\delta_{(x_i,\ell_i,u_i)}(dx,d\ell,du)
\]
be a Poisson point measure with intensity
\[
dx  \ell\pi(d\ell)  \ind_{[0,1]}(u)\,du.
\]
Conditionally on $\cn$, let $(T_i,i\in I)$ be a family of independent
exponential random variables of respective parameter
$\ell_i$. Finally, let $\tilde\cn(dk,   db)=\sum_{j\in
J}\delta_{(k_j,b_j)}(dk,  db) $ be an independent Poisson
point measure on $[0,+\infty)^2$ with intensity $2\beta dk\, db$. We
then define the spine $(\mu_a, \nu_a, m_a)$ which are three measures
given by
\begin{eqnarray*}
\mu_a(dx)
&=& \sum_{i\in
I}\ind_{[0,a]}(x_i)u_i\ell_i\delta_{x_i}(dx)+\ind_{[0,a]}(x)\beta
dx,\\
\nu_a(dx)
&=& \sum_{i\in
I}\ind_{[0,a]}(x_i)(1-u_i)\ell_i\delta_{x_i}(dx)+\ind
_{[0,a]}(x)\beta dx,\\
m_a(dx,dq)
&=& \sum_{i\in
I}\ind_{[0,a]}(x_i)\delta_{x_i}(dx)\delta_{T_i}(dq)+\sum_{j\in
J}\ind_{[0,a]}(k_j)\delta_{k_j}(dx)\delta_{b_j}(dq).
\end{eqnarray*}
We denote by $\tilde\M_a^\psi$ the law of the triple
$(\mu_a,\nu_a,m_a)$, and we set $\tilde
\M^\psi=\int_0^{+\infty}\,da\mathrm{e}^{-\psi'(0) a}\tilde\M
_a^\psi$.

Let us denote by $\P_{\mu,m}^{\psi,*}$ the law of the pair $(\rho
,M^{(\mathrm{mark})})$
starting from $(\mu,m)$ where $\rho$ is an exploration process
associated with $\psi$ and stopped when it first reaches 0.
It is easy to adapt Lemma 3.4 of
\cite{dlgpfalt} to get the following theorem.\vspace*{-2pt}

\begin{theo}[(Bismut decomposition)]
\label{theo:bismut}
For every nonnegative measurable functionals $F$ and $G$,
%
%
\begin{eqnarray}
\label{eq:bismut}
&&\N^\psi \biggl[\int_0^\sigma ds
F\bigl(\rho^{s\rightarrow},M^{(\mathrm{mark}),s\rightarrow}\bigr)G\bigl(\rho^{\leftarrow
s},M^{(\mathrm{mark}),\leftarrow
s}\bigr) \biggr]\nonumber
\\[-9pt]
\\[-9pt]
&& \qquad =\int\tilde\M^\psi(d\mu,d\nu,dm)
\E_{\mu,m}^{\psi,*}[F]\E_{\nu,m}^{\psi,*}[G].
\nonumber
\end{eqnarray}
\end{theo}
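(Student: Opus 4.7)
My plan is to reduce the statement to the unmarked Bismut decomposition (Lemma 3.4 of \cite{dlg:pfalt}, which is exactly \reff{eq:bismut} with the marks stripped out) and then to incorporate the marks using their independence structure conditionally on the exploration process.

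First, I would apply the unmarked version: for any non-negative measurable $\Phi$,
\begin{equation*}
\N^\psi\left[\int_0^\sigma ds\, \Phi(\rho^{s\rightarrow}, \rho^{\leftarrow s})\right] = \int \M^\psi(d\mu,d\nu)\, \E_\mu^{\psi,*}\otimes \E_\nu^{\psi,*}[\Phi].
\end{equation*}
By Remark \ref{rem:W}, the atoms of $\mu_a+\nu_a$ are exactly the pairs (height, size) $(x_i,\ell_i)$ of jumps of $X$ lying on the spine (the ancestral line of $s$ up to height $a$), and each such atom is split between $\mu_a$ (mass $u_i\ell_i$) and $\nu_a$ (mass $(1-u_i)\ell_i$) by an independent uniform $u_i\in[0,1]$, the orientation being fixed so that $\mu$ records the side containing times $>s$ and $\nu$ the side containing times $<s$.

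Second, I would enrich the spine with its marks. By the construction of $M^{(nod)}$ in Section \ref{sec:Mnod}, each jump $\Delta_s$ of $X$ carries an independent exponential $T_s$ of parameter $\Delta_s$, so restricting to the spine yields independent exponentials $T_i$ of parameter $\ell_i$, one per spine atom. By the construction of $M^{(ske)}$ in Section \ref{sec:Mske} as a L\'evy snake with spatial motion a Poisson point process of intensity $2\beta\ind_{\{u>0\}}du$, the skeleton marks along the spine form, conditionally on $\rho$, a Poisson point measure on $[0,H_s]\times(0,+\infty)$ of intensity $2\beta\,dk\,db$. Together these two layers reproduce exactly the third coordinate $m_a$ of $\tilde\M_a^\psi$, and this spine-mark data is inherited identically by both $(\rho^{s\rightarrow},M^{(mark),s\rightarrow})$ and $(\rho^{\leftarrow s},M^{(mark),\leftarrow s})$ because every ancestor of $s$ is, by definition, an ancestor of times both later and earlier than $s$.

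Third, I would deal with the marks on the sub-excursions grafted to the spine. By the Poisson decomposition of Lemma 4.2.4 of \cite{dlg:rtlpsbp} recalled in Section \ref{sec:lien-rho-crt}, conditionally on the spine the sub-excursions branching from it on the right of $s$ and those on the left are independent, and each is an independent excursion of $\rho$. The product form $dx\,\ell\pi(d\ell)\ind_{[0,1]}(u)du$ of the intensity underlying $M^{(nod)}$, combined with the spatial-Markov property of the snake $M^{(ske)}$, then guarantees that the marks restricted to each subtree have the same joint law as a freshly sampled mark process on that subtree, independent of everything else. Consequently, conditionally on $(\mu,\nu,m)$, the right half $(\rho^{s\rightarrow},M^{(mark),s\rightarrow})$ is distributed as $(\rho,M^{(mark)})$ started from $(\mu,m)$ and killed at $0$, i.e.\ under $\P_{\mu,m}^{\psi,*}$, while the left half is independently distributed under $\P_{\nu,m}^{\psi,*}$; Fubini then yields \reff{eq:bismut}. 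The main obstacle will be the bookkeeping in the second and third steps: one has to check carefully that the exponential parameters and the Poisson intensity along the spine match those prescribed by $\tilde\M_a^\psi$, that the left/right orientation of the subtrees is aligned with $\mu$ versus $\nu$, and that the splitting property of $\cn$ and the spatial-Markov property of $M^{(ske)}$ together produce the required conditional independence of the two marked halves given the spine.
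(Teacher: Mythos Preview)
Your proposal is correct and follows exactly the approach the paper indicates: the paper gives no proof beyond the sentence ``It is easy to adapt Lemma 3.4 of \cite{dlg:pfalt} to get the following theorem,'' and your three steps (unmarked Bismut, enrichment of the spine by the conditionally independent node and skeleton marks to recover $\tilde\M^\psi$, then conditional independence of the marked sub-excursions on the two sides via the snake property and the product intensity of $M^{(nod)}$) are precisely that adaptation.
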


Informally speaking, the latter theorem describes a spinal
decomposition of the tree. We first pick an individual $s$
``uniformly.'' The height of that individual is ``distributed'' as
$da\mathrm{e}^{-\psi'(0)a}$. Then, conditionally on that height, the measures
$\rho_s$, $\eta_s$ and $m_s$ have law $\tilde
\M^\psi_a$. Eventually, conditionally on those measures, the marked
exploration processes on the right and on the left (reversed in time
for that one) of the individual $s$ are independent and distributed
as marked exploration processes started respectively from
$(\rho_s,m_s)$ and $(\eta_s,m_s)$, stopped when they first reach 0.

Let us now state the Poisson representation of the probability
measure~$\P_{\mu,m}^{\psi,*}$. Let $(\alpha_i,\beta_i)_{i\in I}$ be the excursion
intervals of the total mass process $(\langle\rho_t,1\rangle,\allowbreak t\ge0)$
above its minimum under $\P_{\mu,m}^{\psi,*}$. Let $(U_i,i\in I)$ be a
family of
independent random variables, independent of $\rho$ and uniformly
distributed on $[0,1]$. For every $i\in
I$, we set $x_i=H_{\alpha_i}$. Then we define $u_i$ by
\[
u_i=
\cases{\displaystyle
\rho_{\alpha_i}(\{x_i\})/\mu(\{x_i\}) ,&\quad  if   $\mu(\{x_i\})>0$,\cr\displaystyle
U_i ,&\quad  if  $\mu(\{x_i\})=0$.
}
\]
Finally, we define the measure-valued process
$\rho^{i}$ by the following: for every $t\ge0$ and every $f\in\cb
_+(\R_+)$,
\[
\langle\rho_t^{i},f\rangle=\int_{(x_i,+\infty)}
f(x-x_i)\rho_{(\alpha_i+t)\wedge\beta_i}(dx),
\]
and we define the measure valued-process $M^{(\mathrm{mark}), i}$ by the
following: for every $t\ge$ and
every $f\in\cb_+(\R_+^2)$,
\[
\bigl\langle M_t^{(\mathrm{mark}),i},f\bigr\rangle=\int_{(x_i,+\infty)\times\R
_+}f(x-x_i,\theta)
M_{(\alpha_i+t)\wedge
\beta_i}^{(\mathrm{mark})}(dx,d\theta).
\]
It is easy to adapt Lemma 4.2.4 from \cite{dlgrtlpsbp} to get the
following proposition.
\begin{prop}
\label{prop:poisson-mark}
The point measure $\sum_{i\in
I}\delta_{(x_i,u_i,\rho^i,M^{(\mathrm{mark}),i}) }$ is un-\break der~$\P_{\mu,m}^{\psi,*}$ a
Poisson point measure with intensity
\[
\mu(dx)\,du\ind_{[0,1]}(u)\N^\psi\bigl(d\rho,dM^{(\mathrm{mark})}\bigr).
\]
\end{prop}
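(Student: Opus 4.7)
The plan is to reduce Proposition \ref{prop:poisson-mark} to the analogous unmarked statement, namely Lemma 4.2.4 of \cite{dlg:rtlpsbp} (which is recalled in Lemma \ref{lem:poisson_repr} above in the special case $\mu=x\delta_0$), by a conditional marking argument. First I would apply that lemma in its measure-valued form: under $\P_\mu^{\psi,*}$, the point measure $\sum_{i\in I}\delta_{(x_i,u_i,\rho^i)}$ is Poisson with intensity $\mu(dx)\,du\,\ind_{[0,1]}(u)\,\N^\psi(d\rho)$. The remaining content of the proposition is that attaching the marks $M^{(mark),i}$ to each atom yields a genuine Poisson measure with the claimed product intensity $\mu(dx)\,du\,\ind_{[0,1]}(u)\,\N^\psi(d\rho,dM^{(mark)})$.

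Next I would check the correct conditional law of $M^{(mark),i}$ given the full exploration process $\rho$. Recall that $M^{(mark)}=M^{(nod)}+M^{(ske)}$, where $M^{(nod)}$ attaches to each jump time $s\in\cj$ an exponential clock $T_s$ of parameter $\Delta_s$, independently across jumps, and where $M^{(ske)}$ is the L\'evy snake driven by $H$ with Poissonian spatial mechanism. On the excursion $(\alpha_i,\beta_i)$, the atoms of $M^{(mark),i}$ coming from $M^{(nod)}$ are supported on heights strictly above $x_i$ (by the condition $X_{s-}<I_t^s$ in \reff{eq:def_rho}, they correspond to jumps occurring in $(\alpha_i,\beta_i)$), and the exponential clocks attached to those jumps are independent of everything that occurred outside $(\alpha_i,\beta_i)$. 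For $M^{(ske)}$, the defining snake property (stated in Section \ref{sec:Mske}) gives that the restriction of $M_t^{(ske)}$ to heights in $[H_{t,t'},H_t]$ and in $[H_{t,t'},H_{t'}]$ are independent; applied at the branching levels $x_i$, this yields that the portions of $M^{(ske)}$ grafted on distinct excursions above their minima are mutually independent, and independent of everything below level $x_i$.

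With these two facts in hand, conditionally on $\rho$ the marks $(M^{(mark),i})_{i\in I}$ are independent and each $M^{(mark),i}$ depends only on the excursion $\rho^i$ through the same procedure (marks on nodes plus snake on skeleton). In other words, conditionally on $\rho^i$, the marked excursion $(\rho^i,M^{(mark),i})$ has the same regular conditional law as a generic marked excursion $(\rho,M^{(mark)})$ under $\N^\psi$ conditioned on $\rho$. Applying the standard marking theorem for Poisson point measures (a conditional thinning/marking of a Poisson measure is Poisson, with intensity obtained by multiplying by the conditional mark kernel), I obtain that $\sum_{i\in I}\delta_{(x_i,u_i,\rho^i,M^{(mark),i})}$ is Poisson with intensity $\mu(dx)\,du\,\ind_{[0,1]}(u)\,\N^\psi(d\rho,dM^{(mark)})$, which is exactly the statement.

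The main obstacle is making the second step rigorous: one must precisely identify the parts of $M^{(nod)}$ and $M^{(ske)}$ that contribute to $M^{(mark),i}$ and argue their independence across $i$. For $M^{(nod)}$ this is immediate from the independence of the exponential clocks $T_s$ attached to distinct jumps of $X$, since the jump sets of $X$ on disjoint excursion intervals are disjoint. For $M^{(ske)}$, it requires invoking the snake property at the levels $x_i$ (which are exactly the branching points of the excursions above their minima) to disentangle the contributions grafted on different excursions; once this is done, the conditional marking theorem finishes the argument. Everything else is a direct transcription of the proof of Lemma 4.2.4 in \cite{dlg:rtlpsbp} with the mark process carried along.
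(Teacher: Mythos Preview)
Your proposal is correct and matches the paper's approach: the paper does not give a detailed proof of Proposition \ref{prop:poisson-mark} but simply states that ``it is easy to adapt Lemma 4.2.4 from \cite{dlg:rtlpsbp}'', and your argument (apply the unmarked Lemma 4.2.4 to get the Poisson structure of $\sum_i\delta_{(x_i,u_i,\rho^i)}$, then use the conditional independence of the node clocks and the snake property of $M^{(ske)}$ to attach the marks via the Poisson marking theorem) is precisely such an adaptation. The only point worth tightening is the treatment of $M^{(ske)}$ at the branching levels $x_i$: you should invoke the snake property not just pairwise but along the full excursion decomposition above the running minimum, which is exactly how the L\'evy snake is built in \cite{dlg:rtlpsbp} and \cite{adv:plcrt}; once this is stated, the independence across $i$ is immediate.
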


\subsection{Reconstruction of the exploration process from a spinal
decomposition}
\label{sec:reconstruc}
Conversely, given the spinal decomposition of Bismut theorem, we reconstruct
the initial exploration process, but we must add the time\vadjust{\goodbreak} indices of
the excursions at the node (which in the previous Section are called~$u_i$).
We shall also add the mark process [see its definition
\reff{eq:mark-process}].

Let $\mu$ and $\nu$ be two finite measures such that $\operatorname{Supp}
\mu=\operatorname{Supp}  \nu=[0,H]$ and $m$ a point measure on
$[0,H]\times
\R_+$. Let $\{(\rho^i,M^{(\mathrm{mark}),i}), i\in J_g\}$ and $\{(\rho^i,
M^{(\mathrm{mark}),i}) , i\in J_d\}$ be two families of marked exploration
processes (see Section \ref{sec:prunedp}). Let $\{(x_i,u_i),i\in
J_g\cup
J_d\}$ be a family of nonnegative real numbers. The measures $\mu$ and
$\nu$ must be seen as the measures $\rho^{s\rightarrow}_0$ and~$\rho^{\leftarrow s}_0$
of Theorem \ref{theo:bismut}, the $x_i$'s are the
heights of the branching points along the chosen branch, the
$\rho^i$'s are the exploration processes that arise from the
decomposition of the processes $\rho^{s\rightarrow}$ and
$\rho^{\leftarrow s}$ above their minimum and the~$u_i$'s are
additional features that order the excursions that are attached at the
same level. The measure $m$ and the processes $M^{(\mathrm{mark}),i}$
will allow us to reconstruct the mark process.

For every $i\in J_g\cup J_d$, we set $\sigma^i$ the length of the process
$\rho^{i}$. We define
%
%
\begin{equation}
\label{eq:tau}
\cl_g=\sum_{i\in J_g}\sigma^i,\qquad\cl_d=\sum_{i\in
J_d}\sigma^i\quad\mbox{and}\quad\cl=\cl_g+\cl_d.
\end{equation}
The variable
$\cl$ represents the total length of the excursion whereas $\cl_g$
plays the
same role as $s$ in the left-hand side of Theorem \ref{theo:bismut}.
For every $i\in J_g$, we set
\[
t^i=\sum_{j\in J_g,x_j<x_i}\sigma^j+\sum_{j\in J_g,x_j=x_i\mathrm{\ and\ }u_j>u_i}\sigma^j,
\]
and, for
every $i\in J_d$, we set
\[
t^i=\cl_g+\sum_{j\in J_d,x_j>x_i}\sigma^j+\sum_{j\in
J_d,x_j=x_i\mathrm{\ and\ }u_j>u_i}\sigma^j,
\]
which is the time of the beginning of the excursion $\rho^i$.

For every $t>0$, we define the measure $\rho_t$ by
\[
\rho_t(dx)=
\cases{\displaystyle
\rho^{i}_{t-t^i}(x_i+dx)+
\mu(dx)\ind_{[0,x_i)}(x) +(u_i\nu(\{x_i\})
+\mu(\{x_i\})) \delta_{x_i}(dx)
,\cr
\displaystyle \qquad\hspace*{13pt}   \mbox{if  $t<
\cl_g, t^i\le t<t^i+\sigma^i$},\vspace*{2pt}\cr\displaystyle
\mu, \qquad   \mbox{if  $t=\cl_g$},\vspace*{2pt}\cr\displaystyle
\rho^{i}_{t-t^i}(x_i+dx)  +\mu(dx)\ind_{[0,x_i)}(x)+
u_i\mu(\{x_i\})\delta_{x_i}(dx) ,\cr\displaystyle \qquad \hspace*{13pt}   \mbox{if  $\cl_g<t< \cl,
t^i\le t<t^i+\sigma^i$,}\vspace*{2pt}\cr\displaystyle
0 , \qquad   \hspace*{2.4pt}\mbox{if  $t\geq\cl$.}
}
\]
We also define the mark process $M^{(\mathrm{mark})}(dx,dv)$ by
\[
\cases{\displaystyle
M^{(\mathrm{mark}),i}_{t-t^i}(x_i+dx,dv)+m(dx,dv)\ind_{[0,x_i]}(x)
, \cr
\displaystyle\qquad \hspace*{13.8pt}  \mbox{if  $t<
\cl_g$   or  $\cl_g<t<\cl, t^i\le t<t^i+\sigma^i$,}\cr\displaystyle
m, \qquad  \mbox{if  $t=\cl_g$,}\cr\displaystyle
0 , \qquad   \hspace*{3.2pt}\mbox{if  $t\geq\cl$.}
}
\]
We say that the process $(\rho,M^{(\mathrm{mark})})=((\rho_t,
M^{(\mathrm{mark})}_t),t\ge0)$ is
the marked exploration process
associated with the family
%
%
\begin{eqnarray}
\label{eq:def-cg}
 \cg&=&\bigl(\mu,\nu,m,\bigl(x_i, u_i,\bigl(\rho^i, M^{(\mathrm{mark}),i}\bigr),i\in
J_g\bigr),\nonumber
\\[-9pt]
\\[-9pt]
&&\hspace*{40pt}\hphantom{\bigl(}\bigl(x_i, u_i,\bigl(\rho^i, M^{(\mathrm{mark}),i}\bigr),i\in J_d\bigr) \bigr).
\nonumber
\end{eqnarray}

From Bismut decomposition, Theorem \ref{theo:bismut}, Proposition
\ref{prop:poisson-mark} and the construction of the mark process,
Section \ref{sec:prunedp}, we get the following reconstruction
corollary.

\begin{cor}
\label{cor:reconstruction}
Let $\psi$ be a (sub)critical branching mechanism. Let $(\mu, \nu,
m)$ be distributed according to $\tilde\M^\psi$. Let $\sum_{i\in
J_g} \delta_{(x_i, u_i, \rho^i, M^{(\mathrm{mark}),i})}$ and  $\sum_{i\in
J_d} \delta_{(x_i, u_i, \rho^i, M^{(\mathrm{mark}),i})}$ be conditionally on
$(\mu,\nu,m)$ independent Poisson point measures with respective
intensity
\[
\mu(dx)  \ind_{[0,1]}(u)\,du  \N^\psi\bigl(d\rho,
dM^{(\mathrm{mark})}\bigr)
\]
and
\[
\nu(dx)  \ind_{[0,1]}(u)\,du
\N^\psi\bigl(d\rho, d M^{(\mathrm{mark})}\bigr).
\]
Then the marked exploration process associated with
the family $\cg$ given by~\reff{eq:def-cg} is distributed as $(\rho,
M^{(\mathrm{mark})})$ under $\N^\psi[\sigma d(\rho,M)]$.
\end{cor}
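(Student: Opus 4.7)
The plan is to start from the Bismut decomposition of Theorem \ref{theo:bismut} and then apply the Poisson representation of Proposition \ref{prop:poisson-mark} to each of the two half-excursions, finally matching the result to the reconstruction recipe preceding the statement.

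First, for a non-negative measurable functional $\Phi$ on the space of marked exploration processes, I would rewrite
\[
\N^\psi[\sigma \Phi(\rho,M^{(mark)})]=\N^\psi\Big[\int_0^\sigma \Phi(\rho,M^{(mark)})\, ds\Big]
\]
and observe that, given $s\in (0,\sigma)$, the pair $(\rho,M^{(mark)})$ is measurably determined by the triple $(\rho_s,\eta_s,M^{(mark)}_s)$ together with $(\rho^{s\to},M^{(mark),s\to})$ and $(\rho^{\leftarrow s},M^{(mark),\leftarrow s})$: this is precisely the content of the reconstruction procedure of Section \ref{sec:reconstruc} once we identify $(\mu,\nu,m)=(\rho_s,\eta_s,M^{(mark)}_s)$. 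Thus there is a measurable map $\Psi$ with $\Phi(\rho,M^{(mark)})=\Psi\bigl((\rho_s,\eta_s,M^{(mark)}_s),(\rho^{s\to},M^{(mark),s\to}),(\rho^{\leftarrow s},M^{(mark),\leftarrow s})\bigr)$ for $ds$-almost every $s$.

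Next, Theorem \ref{theo:bismut} (in the version that keeps the marks) gives
\[
\N^\psi[\sigma \Phi(\rho,M^{(mark)})]=\int \tilde{\M}^\psi(d\mu,d\nu,dm)\, \E_{\mu,m}^{\psi,*}\otimes\E_{\nu,m}^{\psi,*}\bigl[\Psi((\mu,\nu,m),\cdot,\cdot)\bigr].
\]
Under $\P_{\mu,m}^{\psi,*}$ the excursions of $(\rho,M^{(mark)})$ above the running minimum of the total mass are, by Proposition \ref{prop:poisson-mark}, the atoms of a Poisson point measure on $\R_+\times [0,1]\times \{\text{marked excursions}\}$ with intensity $\mu(dx)\,\ind_{[0,1]}(u)du\,\N^\psi(d\rho,dM^{(mark)})$, and similarly for $\E_{\nu,m}^{\psi,*}$ with $\mu$ replaced by $\nu$. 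These are exactly the two independent Poisson point measures appearing in the statement of the corollary, producing the indexed families $(x_i,u_i,\rho^i,M^{(mark),i})_{i\in J_g}$ and $(x_i,u_i,\rho^i,M^{(mark),i})_{i\in J_d}$.

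Finally, I would check that the map $\Psi$ applied to these ingredients is exactly the marked exploration process built from the data $\cg$ in \reff{eq:def-cg}: this is a direct comparison between the time-change used to splice the sub-excursions into $\rho^{s\to}$ and $\rho^{\leftarrow s}$ and the formulas for $\rho_t$ and $M^{(mark)}_t$ in Section \ref{sec:reconstruc}. In particular, on $\{t<\cl_g\}$ one recovers a sub-excursion grafted on top of $\mu=\rho_s$, shifted by the spine height $x_i$ and augmented by the left-of-spine mass $u_i\nu(\{x_i\})+\mu(\{x_i\})$ sitting at the node $x_i$; the symmetric description for $\cl_g<t<\cl$ corresponds to the right-half excursions. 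The main (and essentially only) obstacle is the bookkeeping for the atomic nodes: one must verify that the convention $u_i=\rho_{\alpha_i}(\{x_i\})/\mu(\{x_i\})$ of Section \ref{sec:Bismut} coincides with the ordering role played by $u_i$ in the reconstruction, and that the marks carried by the spine measure $m$ are correctly re-inserted on $[0,x_i]$ in both formulas for $M^{(mark)}_t$; once this identification is done, the equality of the two sides follows from the Bismut formula \reff{eq:bismut}, the Poisson representation, and a monotone class argument in $\Phi$.
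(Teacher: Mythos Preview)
Your outline is exactly the one the paper has in mind: combine the Bismut decomposition (Theorem \ref{theo:bismut}) with the Poisson representation of Proposition \ref{prop:poisson-mark} applied to each half-excursion, and compare with the explicit formulas of Section \ref{sec:reconstruc}. So the strategy is correct and matches the paper.

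There is, however, one genuine gap. You claim that ``the pair $(\rho,M^{(mark)})$ is measurably determined by the triple $(\rho_s,\eta_s,M^{(mark)}_s)$ together with $(\rho^{s\to},M^{(mark),s\to})$ and $(\rho^{\leftarrow s},M^{(mark),\leftarrow s})$: this is precisely the content of the reconstruction procedure of Section \ref{sec:reconstruc}'', and later that ``the map $\Psi$ applied to these ingredients is exactly the marked exploration process built from the data $\cg$''. This is not true pathwise. As the Remark immediately following the Corollary makes explicit, if one decomposes $(\rho,M^{(mark)})$ at a time $s$ and then feeds the resulting family $\cg$ into the reconstruction recipe of Section \ref{sec:reconstruc}, one does \emph{not} recover $\rho$: one obtains a process $\tilde\rho$ in which each sub-excursion on the left of $s$ has been time-reversed relative to $\rho$. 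Thus you cannot write $\Phi(\rho,M^{(mark)})=\Psi(\cg)$ as a pathwise identity, and the ``direct comparison'' you propose between the splicing and the formulas would fail on the $\{t<\cl_g\}$ part.

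What rescues the statement is purely distributional. Under $\N^\psi$ the marked exploration process is invariant in law under time-reversal of the excursion (this is the duality $(\eta_{(\sigma-t)-},M^{(mark)}_{\sigma-t})\overset{d}{=}(\rho_t,M^{(mark)}_t)$ recalled at the beginning of Section \ref{sec:Bismut}). Hence the Poisson point measure of left-side sub-excursions has the same law whether or not each atom is time-reversed, and the reconstructed process $\tilde\rho$ has the same law as $\rho$ under $\N^\psi[\sigma\,\cdot\,]$. You should replace the pathwise identification $\Phi(\rho,M^{(mark)})=\Psi(\cg)$ by this distributional step; once that is done, the remaining bookkeeping (matching the atomic masses $u_i\nu(\{x_i\})+\mu(\{x_i\})$ at the nodes and re-inserting the spine marks $m$ on $[0,x_i]$) goes through as you describe.
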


\begin{rem}
If we start with an exploration process $\rho$, pick $s$ at random
(conditionally on $\rho$) on $[0,\sigma]$, then the decomposition of
$\rho^{s\rightarrow}$ and $\rho^{\leftarrow s}$ as excursions above
their minimum gives a family $\cg$. The exploration process $\tilde
\rho$ associated with $\cg$ given by the previous construction is not
$\rho$. Indeed, each excursion of $\tilde\rho$ ``on the left'' of $s$
is time-reversed with respect to those of $\rho$. However, the trees
coded by $\rho$ and $\tilde\rho$ are the same.\vspace*{-3pt}
\end{rem}

We can also reconstruct the pruned exploration process by pruning
$\cg$.
Let
$\theta>0$. We define the lowest mark
lying on the spine as
%
%
\begin{equation}
\label{eq:xi}
\xi_{\theta}=\sup\{x; m([0,x]\times[0,\theta])=0\}.
\end{equation}
We set $\mu^\theta=\mu\ind_{[0, \xi_\theta)}$, $\nu^\theta=\nu
\ind_{[0,
\xi_\theta)}$, $m^\theta(dx,dq)=m(dx, \theta+dq)\ind_{[0,
\xi_\theta)}(x)$, for $\delta\in\{g,
d\}$ $J_\delta^\theta=\{i\in J_\delta; x_i<\xi_\theta\}$ and
%
%
\begin{eqnarray}
\label{eq:cgq}
\cg_\theta&=&\bigl(\mu^\theta,\nu^\theta, m^\theta,
\bigl(x_i,u_i,\Lambda_{\theta}\bigl(\rho^i, M^{(\mathrm{mark}),i}\bigr),i\in
J_g^\theta\bigr),\nonumber
\\[-8pt]
\\[-8pt]
&&\hphantom{\bigl(}\hspace*{56pt}\bigl(x_i, u_i,\Lambda_{\theta} \bigl(\rho^i, M^{(\mathrm{mark}),i}\bigr),i\in
J_d^\theta\bigr) \bigr),
\nonumber
\end{eqnarray}
where the pruning operator $\Lambda_\theta$ is defined in
\reff{eq:notTq}.\vspace*{-3pt}

\begin{prop}
\label{prop:cgq}
Under the hypothesis of Corollary \ref{cor:reconstruction}, let
$(\rho^\theta,\break M^{(\mathrm{mark}), \theta})$ be the marked exploration process
associated with the family $\cg_\theta$ given
by \reff{eq:cgq}. The process $(\rho^\theta, \theta\geq0)$ is
distributed as $\crr_0$ under $\bN^\psi[\sigma_0d\crr]$.\vspace*{-3pt}
\end{prop}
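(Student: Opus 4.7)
The plan is to reduce the proposition to a pathwise commutation identity: the marked exploration process associated with $\cg_\theta$ (as defined by \reff{eq:cgq}) should coincide, for each $\theta \geq 0$, with the image under the pruning operator $\Lambda_\theta$ of the marked exploration process associated with $\cg$. Once this pathwise identity is established for every $\theta$ jointly, the conclusion follows immediately. Indeed, Corollary \ref{cor:reconstruction} tells us that the marked exploration process $(\rho, M^{(mark)})$ built from $\cg$ is distributed as $(\rho, M^{(mark)})$ under $\N^\psi[\sigma \, d(\rho,M^{(mark)})]$. By the very definition of the pruning procedure of Section \ref{sec:prunedp}, the family $\crr_0 = (\rho^\theta, \theta \geq 0)$ is nothing but the first coordinate of $(\Lambda_\theta(\rho, M^{(mark)}), \theta \geq 0)$, and since this family is a deterministic (pathwise) functional of $(\rho, M^{(mark)})$, the push-forward of $\N^\psi[\sigma \, d(\rho,M^{(mark)})]$ under this functional is exactly $\bN^\psi[\sigma_0 \, d\crr]$.

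To verify the pathwise commutation, I would first argue that an individual $t$ of the reconstructed tree has a mark on its lineage by time $\theta$ if and only if either (a) its spinal height lies above $\xi_\theta$ (so it inherits the first mark on the spine), or (b) it belongs to a subtree $\rho^i$ grafted at some height $x_i$ and its image in $\rho^i$ has a mark on its lineage by time $\theta$. Case (a) forces the spine to be truncated at $\xi_\theta$, so the surviving spine measures become $\mu^\theta = \mu\ind_{[0,\xi_\theta)}$ and $\nu^\theta = \nu\ind_{[0,\xi_\theta)}$, and only the residual marks of parameter $>\theta$ on this truncated spine persist, with parameter shifted by $-\theta$, giving $m^\theta$. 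Case (b) forces each subtree grafted at $x_i < \xi_\theta$ to be replaced by $\Lambda_\theta(\rho^i, M^{(mark), i})$, while subtrees at $x_i \geq \xi_\theta$ disappear entirely since they are attached above the first spinal mark. These three observations are precisely what the definition \reff{eq:cgq} of $\cg_\theta$ encodes.

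The main technical obstacle is to verify the commutation at the level of the time-change defining $\rho^\theta$. One must check that the reconstruction indices $t^i$ built from partial sums of excursion lengths $\sigma^i$ transform correctly when each $\rho^i$ is replaced by its pruned version: the length $\sigma^i$ must be replaced by the length of the pruned excursion, and the order along the spine must remain consistent. This should go through because the pruning time-change $C^{(\theta)}_t$ of \reff{eq:Ctheta} splits as a disjoint sum over the excursions of $\langle \rho_t, 1 \rangle$ above its minimum -- a mark inside one excursion does not affect times outside it -- and, crucially, the marks on the spine act only by shortening the surviving segment $[0, \xi_\theta)$ and discarding every excursion above $\xi_\theta$. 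No new probabilistic argument is needed for the joint distribution in $\theta$, since the full $\theta$-indexed family is produced pathwise from a single marked exploration process.
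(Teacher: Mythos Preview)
Your proposal is correct and follows exactly the paper's approach: the paper's proof simply asserts that ``by construction, $(\rho^\theta, M^{(mark),\theta})=\Lambda_\theta(\rho,M^{(mark)})$'' and then invokes Corollary~\ref{cor:reconstruction}. Your write-up is a careful unpacking of what ``by construction'' means here, but the logical structure is identical.
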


\begin{pf}
Let us remark that, by construction,
$(\rho^\theta, M^{(\mathrm{mark}),\theta})=\Lambda_\theta(\rho
,\allowbreak M^{(\mathrm{mark})})$. The proposition now follows from
Corollary \ref{cor:reconstruction}.\vadjust{\goodbreak}
\end{pf}

\subsection{The infinite tree and its pruning}
\label{sec:ifp}
Let $\psi$ be a critical branching mechanism.

We build a
marked continuum random tree associated with the branching mechanism
$\psi$ using a spine decomposition with an infinite spine.
Intuitively, if
the CRT dies in finite time (which corresponds to the case $H$
continuous) this infinite CRT can be seen as the CRT conditioned to
nonextinction.

Let
\[
\cn(dx,d\ell,du)=\sum_{i\in I}\delta_{(x_i,\ell_i,u_i)}(dx,d\ell,du)
\]
be a Poisson point measure with intensity
\[
dx  \ell\pi(d\ell)  \ind_{[0,1]}(u)\,du.
\]
Conditionally on $\cn$, let $(T_i,i\in I)$ be a family of independent
exponential random variables of respective parameter
$\ell_i$. Finally, let $\tilde\cn(dk, db)=\sum_{j\in
J}\delta_{(k_j,b_j)}(dk,  db) $ be an independent Poisson
point measure on $[0,+\infty)^2$ with intensity $2\beta dk\, db$.
We define the following random measures:
\begin{eqnarray*}
\mu^*(dx)
&=& \sum_{i\in
I}u_i\ell_i\delta_{x_i}(dx)+\beta dx,\\
\nu^*(dx)
&=& \sum_{i\in
I}(1-u_i)\ell_i\delta_{x_i}(dx)+\beta dx,\\
m^*(dx,dq)
&=& \sum_{i\in
I}\delta_{x_i}(dx)\delta_{T_i}(dq)+\sum_{j\in
J}\delta_{k_j}(dx)\delta_{b_j}(dq).
\end{eqnarray*}
The measure $(\mu^*, \nu^*, m^*)$ corresponds to the the measure
$(\mu_a,
\nu_a, m_a)$ of Section~\ref{sec:Bismut}
but for an infinite spine.
Let
\[
\sum_{i\in J_g} \delta_{(x_i, u_i, \rho^i, M^{(\mathrm{mark}),i})}  \quad
\mbox{and} \quad
\sum_{i\in J_d} \delta_{(x_i, u_i, \rho^i, M^{(\mathrm{mark}),i})}
\]
be conditionally on $(\mu^*,\nu^*,m^*)$ independent Poisson point
measures with
intensity
\[
\nu^*(dx) \ind_{[0,1]}(u)\,du \N^\psi\bigl(d\rho, dM^{(\mathrm{mark})}\bigr)
\]
and
\[
\mu^*(dx) \ind_{[0,1]}(u)\,du \N^\psi\bigl(d\rho, d M^{(\mathrm{mark})}\bigr).
\]
We set
\[
\cg^*=\bigl(\mu^*,\nu^*,m^*, \bigl(x_i, u_i,\bigl(\rho^i, M^{(\mathrm{mark}),i}\bigr),i\in
J_g\bigr),\bigl(x_i, u_i,\bigl(\rho^i, M^{(\mathrm{mark}),i}\bigr),i\in J_d\bigr) \bigr),
\]
which describes the decomposition of an infinite marked tree as marked
sub-trees that are attached along its infinite spine. Let $\theta>0$.
Following the end of Section~\ref{sec:reconstruc}, we now extend the
pruning procedure to this infinite tree\vadjust{\goodbreak} by letting $\cg^*_\theta$ be
constructed from $\cg^*$ as $\cg_\theta$ given by \reff{eq:cgq} from
$\cg$ given by~\reff{eq:def-cg}
%
\begin{eqnarray}
 \xi_{\theta}^*&=&\sup\{x; m^*([0,x]\times[0,\theta])=0\}, \qquad
J_\delta^\theta=\{i\in J_\delta;
x_i<\xi_\theta^*\}\nonumber\\
\eqntext{\mbox{for } \delta\in\{g,
d\},}  \\[-4pt]
 \mu^{*,\theta}&=&\mu^*\ind_{[0,
\xi_\theta^*)},\qquad\nu^{*,\theta}=\nu^*\ind_{[0,
\xi_\theta^*)},\nonumber\\
 m^{*,\theta}(dx,dq)&=&m^*(dx, \theta+dq)\ind_{[0,
\xi_\theta^*)}(x), \nonumber\\
 \cg_\theta^*&=&\bigl(\mu^{*,\theta},\nu^{*,\theta}, m^{*,\theta},
\bigl(x_i,u_i,\Lambda_{\theta}\bigl(\rho^i, M^{(\mathrm{mark}),i}\bigr),i\in J_g^\theta\bigr),\nonumber\\
&&\hspace*{78pt}\hphantom{\bigl(}\bigl(x_i,
u_i,\Lambda_{\theta} \bigl(\rho^i, M^{(\mathrm{mark}),i}\bigr),i\in J_d^\theta\bigr) \bigr).\nonumber
\end{eqnarray}

We have the following lemma.
\begin{lem}
\label{lem:spine}
Let $\theta>0$. The probability distribution of the spine
$(\mu^{*,\theta},\nu^{*,\theta} ,\allowbreak  m^{*,\theta})$ is $ \psi
'(\theta)
\tilde\M^{\psi_\theta}$.
\end{lem}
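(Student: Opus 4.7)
The plan is to decompose the law of $(\mu^{*,\theta},\nu^{*,\theta},m^{*,\theta})$ along the random height $\xi_\theta^*$. First I would compute the marginal distribution of $\xi_\theta^*$ by observing that the marks on the spine with time coordinate in $[0,\theta]$, projected on the spatial axis, form a Poisson process on $\R_+$. By thinning, the node marks with $T_i\le \theta$ have projected intensity $\int_{(0,+\infty)}\ell(1-\expp{-\theta \ell})\pi(d\ell)\,dx$ (integrating out $u_i$ over $[0,1]$ and conditioning on $T_i\le\theta$), while the skeleton marks with $b_j\le\theta$ project to intensity $2\beta\theta\,dx$. Summing,
\[
2\beta\theta+\int_{(0,+\infty)}\ell(1-\expp{-\theta\ell})\pi(d\ell)=\psi'(\theta)-\psi'(0)=\psi'(\theta),
\]
where we used the criticality hypothesis $\psi'(0)=0$. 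Hence $\xi_\theta^*$ is exponentially distributed with parameter $\psi'(\theta)$.

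Second, I would identify the conditional distribution of $(\mu^{*,\theta},\nu^{*,\theta},m^{*,\theta})$ given $\xi_\theta^*=a$. The event $\{\xi_\theta^*=a\}$ is (infinitesimally) the event that no mark lies in $[0,a]\times[0,\theta]$. By standard Poisson thinning, the atoms of $\cN$ lying in $[0,a]\times(0,+\infty)\times[0,1]$ with $T_i>\theta$ form, conditionally, a Poisson measure of intensity
\[
\ind_{[0,a]}(x)\,dx\;\ell\expp{-\theta\ell}\pi(d\ell)\;\ind_{[0,1]}(u)\,du=\ind_{[0,a]}(x)\,dx\;\ell\pi^{(\theta)}(d\ell)\;\ind_{[0,1]}(u)\,du,
\]
which matches exactly the intensity of the Poisson measure underlying $\tilde\M^{\psi_\theta}_a$ since $\psi_\theta$ has Lévy measure $\pi^{(\theta)}$ (Remark \ref{rem:psiq-cons}). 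By the memoryless property of the exponentials, the shifted times $T_i-\theta$ for the surviving atoms are independent exponentials with parameter $\ell_i$, which is the law used in the construction of $m_a$ under $\tilde\M^{\psi_\theta}_a$. Similarly, the skeleton marks $(k_j,b_j)$ with $k_j<a$ and $b_j>\theta$, once shifted in the second coordinate by $-\theta$, form a Poisson measure of intensity $2\beta\,dk\,db$ on $[0,a]\times\R_+$, matching the skeleton marks under $\tilde\M^{\psi_\theta}_a$ (as $\psi_\theta$ has the same Brownian coefficient $\beta$). Combining these, conditionally on $\xi_\theta^*=a$,
\[
(\mu^{*,\theta},\nu^{*,\theta},m^{*,\theta})\stackrel{(d)}{=}(\mu_a,\nu_a,m_a)\text{ under }\tilde\M^{\psi_\theta}_a.
\]

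Finally, integrating against the exponential density of $\xi_\theta^*$ gives
\[
\int_0^{+\infty}\psi'(\theta)\expp{-\psi'(\theta)a}\,da\;\tilde\M^{\psi_\theta}_a=\psi'(\theta)\int_0^{+\infty}\expp{-\psi_\theta'(0)a}\,da\;\tilde\M^{\psi_\theta}_a=\psi'(\theta)\tilde\M^{\psi_\theta},
\]
since $\psi_\theta'(0)=\psi'(\theta)$, which is the claimed identity. The only mildly delicate point is the bookkeeping of the two independent thinnings and the memoryless shift of the node marks, but no analytical difficulty arises beyond the identity $\psi'(\theta)-\psi'(0)=2\beta\theta+\int\ell(1-\expp{-\theta\ell})\pi(d\ell)$.
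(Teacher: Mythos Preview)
Your proposal is correct and follows essentially the same approach as the paper's proof: the paper likewise uses Poisson thinning to show that the surviving node atoms (those with $T_i>\theta$) form a Poisson measure with intensity $dx\,\ell\expp{-\theta\ell}\pi(d\ell)\,\ind_{[0,1]}(u)du$ independent of $\xi_\theta^*$, identifies $\xi_\theta^*$ as the minimum of two independent exponentials with total rate $2\beta\theta+\int(1-\expp{-\theta\ell})\ell\,\pi(d\ell)=\psi'(\theta)$ (using criticality), and combines these to get $\psi'(\theta)\tilde\M^{\psi_\theta}$. The only cosmetic difference is the order of the two steps; your explicit mention of the memoryless shift $T_i-\theta$ and of the skeleton-mark shift is a bit more detailed than the paper's version, but the argument is the same.
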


\begin{pf}
As $\psi$ is critical, we deduce from \reff{eq:def_alpha} that
\[
\psi'(\theta)=2\beta\theta+ \int_{(0,+\infty)} (1-\mathrm
{e}^{-\theta \ell})\ell\pi(d\ell).
\]

We deduce from the theory of marked Poisson point measures that
\[
\cn^\theta(dx,d\ell,du)=\sum_{i\in
I}\ind_{\{T_i>\theta\}}\delta_{(x_i,\ell_i,u_i)}(dx,d\ell,du)
\]
is a Poisson point measure with intensity $dx  \ell\mathrm
{e}^{-\theta \ell}
\pi(d\ell)  \ind_{[0,1]}(u)\,du$. Since $\xi_\theta^*$ is independent
of $\cn^\theta$, we deduce that, conditionally on $\xi_\theta^*$,
$(\mu^{*,\theta},\nu^{*,\theta} , m^{*,\theta})$ is distributed
according to $\tilde\M^{\psi_\theta}_{\xi_\theta^*}$. Notice then
that $\xi^*_\theta$ is the
minimum of $T_1=\inf\{x_i; T_i\leq\theta, i\in I\}$ and $T_2=\inf\{k_j;
b_j\leq\theta, j\in J\}$, which are two independent exponential random
variables, which are also independent of $\cn^\theta$. The exponential
distribution of $T_1$ has parameter $\int_{(0,+\infty)}
(1-\mathrm{e}^{-\theta\ell} )\ell\pi(d\ell)$, and the
exponential distribution of $T_2$ has parameter $2\beta\theta$. Thus
$\xi^*_\theta$ has an exponential distribution with parameter
$\psi'(\theta)$, which gives the result.
\end{pf}

Let $(\rho^{\theta,*}, M^{(\mathrm{mark}), \theta,*})$ be the marked
exploration process associated with~$\cg_\theta^*$. We set
$\crr_\theta^*=(\rho^{\theta+q,*}, q\geq0)$ and denote by $\bE
^\psi$
its law. The next proposition tells us that $\crr^*_\theta$ under
$\bE^\psi$ is, up to a normalizing constant, the size biased
``distribution'' of $\crr_\theta$ under $\bN^\psi$.

\begin{prop}\label{prop:g=g*}
Let $\psi$ be a critical branching mechanism. For every positive
measurable functional $F$ and every $\theta>0$, we have
\[
\psi'(\theta)\bN^\psi[\sigma_\theta F(\crr_\theta)]
=\bE^\psi[F(\crr_\theta^*)].
\]
\end{prop}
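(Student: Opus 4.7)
The plan is to identify the infinite-spine construction of $\crr_\theta^*$ with a Bismut-type reconstruction for the (sub)critical mechanism $\psi_\theta$, then to invoke Proposition~\ref{prop:cgq} for $\psi_\theta$, and finally to transport back to $\psi$ via the compatibility relation of Lemma~\ref{lem:Nrs}.

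First, Lemma~\ref{lem:Nrs} yields
\[
\bN^\psi[\sigma_\theta F(\crr_\theta)]=\bN^{\psi_\theta}[\sigma_0 F(\crr_0)],
\]
so the identity to prove reduces to $\bE^\psi[F(\crr_\theta^*)]=\psi'(\theta)\bN^{\psi_\theta}[\sigma_0 F(\crr_0)]$. Since $\theta>0$, the mechanism $\psi_\theta$ is sub-critical and Proposition~\ref{prop:cgq} applies: the right-hand side is the integral of $F$ against the law of the family obtained by iteratively pruning a Bismut decomposition of a $\psi_\theta$-CRT, namely $(\mu,\nu,m)\sim\tilde\M^{\psi_\theta}$ together with conditionally independent Poisson families of marked subtrees on $J_g$ and $J_d$ of respective intensities $\mu(dx)\,\ind_{[0,1]}(u)du\,\N^{\psi_\theta}(d\rho,dM^{(mark)})$ and $\nu(dx)\,\ind_{[0,1]}(u)du\,\N^{\psi_\theta}(d\rho,dM^{(mark)})$.

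Next I would unpack the distribution of $\cg_\theta^*$. By Lemma~\ref{lem:spine}, the pruned spine $(\mu^{*,\theta},\nu^{*,\theta},m^{*,\theta})$ has law $\psi'(\theta)\tilde\M^{\psi_\theta}$, which supplies the announced prefactor. Restricting the $J_g$ Poisson measure of intensity $\nu^*(dx)\,\ind_{[0,1]}(u)du\,\N^\psi(d\rho,dM^{(mark)})$ to $\{x_i<\xi_\theta^*\}$ yields a Poisson measure of intensity $\nu^{*,\theta}(dx)\,\ind_{[0,1]}(u)du\,\N^\psi$; applying the deterministic operator $\Lambda_\theta$ to each marked subtree then transports $\N^\psi$ to $\N^{\psi_\theta}$ by Proposition~\ref{prop:Lrq}, so the pruned $J_g^\theta$-family has intensity $\nu^{*,\theta}(dx)\,\ind_{[0,1]}(u)du\,\N^{\psi_\theta}$, and symmetrically for $J_d^\theta$ with $\mu^{*,\theta}$. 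The further pruning of $\cg_\theta^*$ at levels $q\geq 0$ used to define $(\rho^{\theta+q,*},q\geq 0)$ commutes with this spine/subtree decomposition and corresponds exactly to the iterative pruning in Proposition~\ref{prop:cgq}. Assembling these facts gives
\[
\bE^\psi[F(\crr_\theta^*)]=\psi'(\theta)\bN^{\psi_\theta}[\sigma_0 F(\crr_0)]=\psi'(\theta)\bN^\psi[\sigma_\theta F(\crr_\theta)],
\]
as desired.

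The main bookkeeping point to check is that in Corollary~\ref{cor:reconstruction} the $J_g$ intensity uses $\mu$ whereas in the definition of $\cg^*$ it uses $\nu^*$ (and symmetrically for $J_d$). This apparent swap is harmless: $\tilde\M^{\psi_\theta}$ is invariant under $(\mu,\nu)\leftrightarrow(\nu,\mu)$ thanks to the symmetry $u\mapsto 1-u$ in the underlying Poisson measure $\cn$, and exchanging the $J_g\leftrightarrow J_d$ roles in the reconstruction amounts to a time-reversal of the resulting excursion, whose law under $\N^{\psi_\theta}$ is invariant by the duality between $\rho$ and $\eta$ recalled at the start of Section~\ref{sec:Bismut}. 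This legitimates the identification of the two constructions.
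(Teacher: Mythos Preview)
Your proof is correct and follows essentially the same route as the paper. Both arguments reduce to $\bN^{\psi_\theta}[\sigma_0 F(\crr_0)]$ via the compatibility relation, then identify the $\theta$-pruned infinite-spine construction with a $\psi_\theta$-Bismut decomposition by combining Lemma~\ref{lem:spine} for the spine and Proposition~\ref{prop:Lrq} (the push-forward of $\N^\psi$ to $\N^{\psi_\theta}$ under $\Lambda_\theta$) for the grafted subtrees; the only cosmetic difference is that you route through Proposition~\ref{prop:cgq} whereas the paper invokes the Bismut formula~\reff{eq:bismut} directly, and you make explicit the harmless $(\mu,\nu)\leftrightarrow(\nu,\mu)$ swap that the paper leaves implicit.
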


\begin{pf}
Let $F$ be a positive measurable functional. As $\crr$ is constructed
from $(\rho, M^{(\mathrm{mark})})$, there exists a positive measurable
functional $G$ such that
\[
F(\crr)=G\bigl(\rho,M^{(\mathrm{mark})}\bigr).
\]
Moreover, there exists another positive functional $\tilde G$ such
that, for every $s\ge0$,
\[
G\bigl(\rho,M^{(\mathrm{mark})}\bigr)=\tilde
G\bigl(\bigl(\rho^{s\rightarrow},M^{(\mathrm{mark}),s\rightarrow}\bigr),\bigl(\rho^{\leftarrow
s},M^{(\mathrm{mark}), \leftarrow s}\bigr)\bigr).
\]

Then by Bismut decomposition, we have
\begin{eqnarray*}
&&\psi'(\theta) \bN^\psi[\sigma_\theta F(\crr_\theta)]\\
&& \qquad = \psi'(\theta) \bN^{\psi_\theta}[\sigma F(\crr)]\\
&& \qquad = \psi'(\theta) \bN^{\psi_\theta} \biggl[\int_0^{\sigma}
ds \tilde G\bigl(\bigl(\rho^{s\rightarrow},M^{(\mathrm{mark}),s\rightarrow}\bigr),\bigl(\rho
^{\leftarrow
s},M^{(\mathrm{mark}), \leftarrow s}\bigr)\bigr) \biggr]\\
&& \qquad = \int
\psi'(\theta) \tilde\M^{\psi_\theta}(d\mu,d\nu,dm)\E_{\mu
,m}^{\psi_\theta,*}
\otimes\E_{\nu,m}^{\psi_\theta,*}[\tilde G] .
\end{eqnarray*}
Then we conclude using Lemma
\ref{lem:spine} and the fact that $\N^{\psi_\theta}(d\rho, M^{(\mathrm{mark})})$
is the
distribution of
$\Lambda_\theta(\rho,M^{(\mathrm{mark})})$ under
$\N^\psi(d\rho,d M^{(\mathrm{mark})})$.
\end{pf}


\section{Distribution identity}\label{sec:proof_theo}
Let $\psi$ be a critical branching mechanism with
parameter $(\alpha,\beta,\pi)$. We assume that $\theta_\infty<0$.
Recall $\crr=(\crr_\theta, \theta\in\Theta)$ is defined in Section
\ref{sec:tree-valued} and $\crr^*_\theta$ in Section \ref{sec:ifp}.

\begin{theo}
\label{theo:=loi}
Let $\theta\in(\theta_\infty,0)$.
Conditionally on $\{A=\theta\}$, $\crr_A$ is distributed as
$\crr^*_{\bar\theta}$.
\end{theo}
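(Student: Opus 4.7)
The plan is to match two closed-form descriptions that the preceding sections have already provided: an explicit conditional expectation formula for $\crr_A$ given $\{A=\theta\}$, and an explicit size-biased representation of $\crr^*_{\bar\theta}$. The goal will be to rewrite both of them as the same integral against $\bN^\psi$, after which equality of the laws is immediate.

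First, I would fix a non-negative measurable functional $F$ on the state space of $\crr_0$ and recall from Theorem \ref{theo:loi-sigmaA}(i) that
\[
\bN^\psi[F(\crr_A) \mid A = \theta] = \psi'(\bar\theta)\, \bN^\psi\bigl[F(\crr_0)\, \sigma_0\, \expp{-\psi(\theta)\sigma_0}\bigr].
\]
Second, I would compute the same type of integral for $\crr^*_{\bar\theta}$. Since $\psi$ is critical and $\theta \in (\theta_\infty,0)$, strict convexity of $\psi$ together with $\psi(0)=0$ forces $\bar\theta > 0$, so Proposition \ref{prop:g=g*} applies at parameter $\bar\theta$ and yields
\[
\bE^\psi[F(\crr^*_{\bar\theta})] = \psi'(\bar\theta)\, \bN^\psi\bigl[\sigma_{\bar\theta}\, F(\crr_{\bar\theta})\bigr].
\]
The remaining step is a Girsanov change of measure: I would apply Lemma \ref{lem:Nrs} at parameter $\bar\theta$ to the non-negative measurable functional $\tilde F(\crr) = \sigma_0(\crr) F(\crr)$. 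Since $\sigma_0$ evaluated at $\crr_{\bar\theta}$ equals $\sigma_{\bar\theta}$, and the event $\{\sigma_{\bar\theta} < \infty\}$ has full $\bN^\psi$-mass because $\psi_{\bar\theta}$ is sub-critical, this produces
\[
\bN^\psi[\sigma_{\bar\theta} F(\crr_{\bar\theta})] = \bN^\psi\bigl[\sigma_0 F(\crr_0) \expp{-\psi(\bar\theta)\sigma_0}\bigr].
\]

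The defining relation \reff{eq:defbt} gives $\psi(\bar\theta) = \psi(\theta)$, so the two expressions derived above coincide term-for-term, proving the distributional identity for every non-negative measurable $F$. The only point requiring any care is the embedding of the weight $\sigma_0$ inside the test functional when invoking Lemma \ref{lem:Nrs}: one must verify that $\sigma_0 F(\crr_0)$ is still a measurable functional of $\crr_0$, and identify $\sigma_{\bar\theta}$ with the zeroth-coordinate mass of the shifted process $\crr_{\bar\theta}$. Both are direct consequences of the definitions of $\crr$ and of the stationarity in $\theta$ built into $\bN^\psi$, so modulo this bookkeeping the proof reduces to chaining the three formulas displayed above.
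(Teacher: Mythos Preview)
Your proof is correct and follows essentially the same route as the paper's: both chain Theorem \ref{theo:loi-sigmaA}(i), the Girsanov identity of Lemma \ref{lem:Nrs} at parameter $\bar\theta$ (together with $\psi(\bar\theta)=\psi(\theta)$), and Proposition \ref{prop:g=g*}. The only cosmetic difference is that the paper runs the chain linearly from $\bN^\psi[F(\crr_A)\mid A=\theta]$ to $\bE^\psi[F(\crr^*_{\bar\theta})]$, whereas you compute both endpoints and match them in the middle.
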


\begin{pf}
Let $F$ be a nonnegative measurable function defined on $\cw$. We
have, for $\theta<0$,
\begin{eqnarray*}
\bN^\psi[F(\crr_A)|A=\theta]
&=& \psi'(\bar\theta)\bN^\psi\bigl[
F(\crr_0)\sigma_0\mathrm{e}^{-\psi(\theta)\sigma_0} \bigr]\\
&=& \psi'(\bar\theta)\bN^{\psi_{\bar\theta}}[\sigma_0
F(\crr_0)]\\
&=& \psi'(\bar\theta)\bN^\psi[\sigma_{\bar\theta
}F(\crr_{\bar\theta}) ]\\
&=& \bE^\psi[F(\crr^*_{\bar\theta})],
\end{eqnarray*}
where we used \reff{eq:bNR|A} for the first equality, Girsanov's formula
\reff{eq:Nrs} (with $\theta$ replaced by $\bar\theta$)
for the
second, the invariance of the distribution of $\crr$ by the shift for
the third and Proposition \ref{prop:g=g*} for the last.
\end{pf}

If $u\in(0,\bar\theta_\infty)$, let $\check u$ be the unique negative
real number such that
\[
\bar{\check u}=u.
\]
We deduce from Theorem \ref{theo:law_A} and Remark \ref{rem:densityA}
the following corollary.
\begin{cor}
\label{cor:gA}
Let us suppose that $\theta_\infty\notin\Theta$.\vadjust{\goodbreak}

Let $U$ be a positive ``random'' variable with (nonnegative) ``density''
w.r.t., the Lebesgue measure given by
\[
 \biggl(1-\frac{\psi'(r)}{\psi'(\check r)} \biggr)\ind_{\{r\in
(0,\bar\theta_\infty)\}}.
\]
Assume that $U$ is independent of $\cg^*$. Then $\crr_A$ is
distributed under $\bN^\psi$ as~$\crr^*_U$.
\end{cor}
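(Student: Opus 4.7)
The plan is to combine the conditional distribution of $\crr_A$ given $A$ furnished by Theorem \ref{theo:=loi} with the explicit Lebesgue density of $A$ from Remark \ref{rem:densityA}, and then to perform the substitution $u=\bar r$.

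Concretely, under the hypothesis $\theta_\infty\notin\Theta$, Remark \ref{rem:densityA} states that $A$ has Lebesgue density $\ind_{\{r\in(\theta_\infty,0)\}}\bigl(1-\psi'(r)/\psi'(\bar r)\bigr)$ under $\bN^\psi$. I would disintegrate with respect to $A$ and invoke Theorem \ref{theo:=loi}, which asserts that conditionally on $\{A=r\}$ the process $\crr_A$ is distributed as $\crr^*_{\bar r}$, to obtain, for any non-negative measurable functional $F$ on $\cw$,
\[
\bN^\psi\bigl[F(\crr_A)\bigr]=\int_{\theta_\infty}^0 \bE^\psi\bigl[F(\crr^*_{\bar r})\bigr]\left(1-\frac{\psi'(r)}{\psi'(\bar r)}\right)dr.
\]

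Next I would perform the change of variables $u=\bar r$ (equivalently $r=\check u$). Recall from Remark \ref{rem:densityA} that $r\mapsto\bar r$ is a smooth bijection from $(\theta_\infty,0)$ onto $(0,\bar\theta_\infty)$ with derivative $\psi'(r)/\psi'(\bar r)$; since $\psi'(r)<0$ for $r<0$ while $\psi'(\bar r)>0$, this bijection is \emph{decreasing}, so its inverse satisfies $dr=(\psi'(u)/\psi'(\check u))du$, and after reversing the limits of integration the orientation reversal combines with the factor $1-\psi'(\check u)/\psi'(u)$ via
\[
\left(1-\frac{\psi'(\check u)}{\psi'(u)}\right)\left(-\frac{\psi'(u)}{\psi'(\check u)}\right)=1-\frac{\psi'(u)}{\psi'(\check u)}
\]
to yield
\[
\bN^\psi\bigl[F(\crr_A)\bigr]=\int_0^{\bar\theta_\infty}\bE^\psi\bigl[F(\crr^*_u)\bigr]\left(1-\frac{\psi'(u)}{\psi'(\check u)}\right)du.
\]
Since $U$ is taken independent of $\cg^*$, and hence of the whole family $(\crr^*_u)_{u\ge 0}$, the right-hand side is by definition $\bE^\psi[F(\crr^*_U)]$, which gives the claim.

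The only point requiring care is the sign bookkeeping in the change of variables: one must track the orientation reversal (the map $r\mapsto\bar r$ is decreasing) and the fact that $\psi'(u)/\psi'(\check u)<0$ on the relevant range, so that the two signs cancel and produce the announced non-negative density, which indeed satisfies $1-\psi'(u)/\psi'(\check u)>1$ on $(0,\bar\theta_\infty)$. No other difficulty arises, each ingredient having been established in the preceding sections.
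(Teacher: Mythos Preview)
Your proof is correct and follows exactly the route the paper intends: the paper merely states that the corollary is deduced from Theorem \ref{theo:law_A} and Remark \ref{rem:densityA} (together with the immediately preceding Theorem \ref{theo:=loi}), and you have carefully written out the disintegration with respect to $A$ and the change of variables $u=\bar r$, including the sign bookkeeping, which is precisely what is needed.
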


This corollary can be viewed as a continuous analog of Proposition
26 of~\cite{aptmcdgwp}.


\section{The quadratic case}\label{sec:quadra}
We consider $\psi(\lambda)=\beta\lambda^2$ for some $\beta>0$. We have
$\Theta=\Theta'=\R$ (see the definition in Section \ref
{sec:propbm}) and
$\psi_\theta(\lambda)=\beta(\lambda^2+ 2\theta\lambda)$. Recall
$\bar\theta$ is defined by \reff{eq:def-bartheta}. So we have $\bar
\theta=|\theta|$. From Theorem \ref{theo:law_A}, we get
$\bN^\psi[A\ge\theta]=\bar\theta-\theta=2|\theta|$ for $\theta<0$
and $\bN^\psi[A\ge\theta]=0$ for $\theta\ge0$. Thus
under~$\bN^\psi$, the explosion time $A$ is distributed as 2 times the
Lebesgue measure on $(-\infty,0)$. We deduce from Theorem
\ref{theo:loi-sigmaA} the Laplace transform of the total mass of the CRT
before explosion: for $\lambda\geq0$,
\[
\bN^\psi[\mathrm{e}^{-\lambda\sigma_A}|A=\theta]=\frac{\sqrt
{\beta
\theta^2}}{
\sqrt{\lambda+ \beta\theta^2}} .
\]
In particular the distribution of $\sigma_A$ conditionally on
$\{A=\theta\}$ is the gamma distribution with parameter $(\beta
\theta^2, 1/2 )$.

Very similar computations as those in the proof of Theorem
\ref{theo:loi-sigmaA} yield that for all $s,t\geq0$, $\theta<0$,
$\lambda, \kappa\geq0$
%
%
\begin{eqnarray}
\label{eq:noyau}
&&\bN^\psi[\mathrm{e}^{-\lambda\sigma_{A+s}- \kappa\sigma_{A+s+t}
}|A=\theta]\nonumber
\\[-8pt]
\\[-8pt]
&& \qquad =\frac{\sqrt{\beta(|\theta|+s)^2}}{
\sqrt{\lambda+ \beta(|\theta|+s)^2}}\frac{\sqrt{\beta t^2} +\sqrt
{\lambda+
\beta(|\theta|+s)^2}}{
\sqrt{\kappa+(\sqrt{\beta t^2}+ \sqrt{\lambda+ \beta
(|\theta|+s)^2})^2}} .
\nonumber
\end{eqnarray}
We denote by $\sigma^*_\theta$ the total mass or length (see definition
\reff{eq:tau} of $\cl$) of the pruned infinite tree $\cg^*_\theta$.
Notice that, thanks to Proposition \ref{prop:g=g*}, $\sigma_\theta
^*$ has
the size biased distribution of $\sigma_\theta$ (the total mass of the
CRT with branching mechanism $\psi_\theta$) under $\bN^\psi$. More
precisely, we have for
any nonnegative measurable function, for $\theta>0$,
%
%
\begin{equation}
\label{eq:ss*}
2\beta\theta\bN^\psi[\sigma_\theta F(\sigma_{\theta+q}, q\geq0)]
=\bE^\psi[F(\sigma^*_{\theta+q}, q\geq0)].
\end{equation}
As the process $\Sigma=(\sigma_\theta, \theta\in\R)$ is Markov,
we get
that $\Sigma^*=(\sigma^*_\theta, \theta\geq0)$ is Markov. Notice that
a.s. $\sigma^*_0=+\infty$. Direct computations or using
\reff{eq:noyau} and Theorem \ref{theo:=loi} yield that for all
$\theta,q, \lambda, \kappa\geq0$
\[
\bE^\psi[\mathrm{e}^{-\lambda\sigma^*_{\theta}- \kappa\sigma
_{\theta+q}^* }]=\frac{\sqrt{\beta\theta^2}}{
\sqrt{\lambda+ \beta\theta^2}}\frac{\sqrt{\beta q^2} +\sqrt
{\lambda+
\beta\theta^2}}{
\sqrt{\kappa+(\sqrt{\beta q^2}+ \sqrt{\lambda+ \beta
\theta^2})^2}} .
\]
Let $\tau=(\tau_\theta, \theta\geq0)$ be the first passage process
of a
standard Brownian motion $(B_u, u\geq0)$: $\tau_\theta=\inf\{u\geq0,
B_u\geq\theta\}$. It is a stable subordinator with index $1/2$, and more
precisely with no drift, no killing and L\'{e}vy measure $(2\pi x^3)^{-1/2}\,dx$ on $(0, \infty)$: for $\lambda\geq0$, $\E[\mathrm
{e}^{-\lambda \tau_\theta}]= \mathrm{e}^{-\theta\sqrt{2\lambda
}}$. The distribution of
$\tau_\theta$ has density
\[
\frac{\theta}{\sqrt{2\pi x^3}} \mathrm{e}^{-\theta^2/2x}\ind_{\{
x>0\}}.
\]
We get the following result.
\begin{prop}
\label{prop:st}
We have:
\begin{itemize}
\item under $\bE^\psi$, $(2\beta\sigma_\theta^*, \theta\geq
0)$ is distributed as $(1/\tau_\theta, \theta\geq0)$;
\item under $\bN^\psi$, $(2\beta\sigma_{A+\theta}, \theta\geq
0)$ is distributed as $(1/(V+\tau_\theta), \theta\geq0)$ where $V$
is independent of $\tau$ and its ``distribution'' has density
w.r.t. the Lebesgue measure given by $\sqrt{2/(\pi
v)}\ind_{\{v>0\}}$.
\end{itemize}
\end{prop}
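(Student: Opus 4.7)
The plan is to establish part (i) by identifying the two Markov processes $(2\beta\sigma^*_\theta,\theta\geq 0)$ and $(1/\tau_\theta,\theta\geq 0)$ through matching their one- and two-dimensional Laplace transforms. The Markov property of $\Sigma^*$ is already recorded in the text, and $(1/\tau_\theta)$ inherits the Markov property from the independent-increments structure of the subordinator $\tau$. Since the transition kernel of a Markov process is determined by its two-dimensional joint laws, it suffices to check that, for all $\theta,q>0$ and $\mu,\nu\geq 0$,
\[
\E\!\left[\expp{-\mu/\tau_\theta-\nu/\tau_{\theta+q}}\right]
=\bE^\psi\!\left[\expp{-2\beta\mu\,\sigma^*_\theta-2\beta\nu\,\sigma^*_{\theta+q}}\right],
\]
the right-hand side being the explicit formula displayed in the text immediately above the proposition.

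To compute the left-hand side, I would decompose $\tau_{\theta+q}=\tau_\theta+\tilde\tau_q$ with $\tilde\tau_q$ independent of $\tau_\theta$ and distributed as $\tau_q$, then perform the change of variables $(u,v)\mapsto (u,w=u+v)$ in the resulting double integral. The key trick is the identity
\[
\frac{\theta}{\sqrt{2\pi u^3}}\expp{-\theta^2/(2u)-\mu/u}=\frac{\theta}{a}\,f_{\tau_a}(u),\qquad a:=\sqrt{\theta^2+2\mu},
\]
which absorbs the factor $\expp{-\mu/u}$ into a reparametrised stable-$1/2$ density $f_{\tau_a}$. The inner integral in $u\in(0,w)$ then becomes the convolution $f_{\tau_a}\ast f_{\tau_q}(w)=f_{\tau_{a+q}}(w)$, and the outer integral in $w$ against $\expp{-\nu/w}$ is simply the Laplace transform of $1/\tau_{a+q}$ at $\nu$, equal to $(a+q)/\sqrt{(a+q)^2+2\nu}$. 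Multiplying by $\theta/a$ and substituting $(\mu,\nu)=(\lambda/(2\beta),\kappa/(2\beta))$ reproduces exactly the required formula.

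For part (ii), I would combine part (i) with Theorem~\ref{theo:=loi} and the distribution of $A$ supplied by Theorem~\ref{theo:law_A}. In the quadratic case $\theta_\infty=-\infty$ and $\bar\theta=|\theta|$, so $A$ has ``density'' $2\,\ind_{\{\theta<0\}}$ under $\bN^\psi$. Conditionally on $\{A=\theta_0\}$ with $\theta_0<0$, Theorem~\ref{theo:=loi} gives $\crr_A\stackrel{d}{=}\crr^*_{|\theta_0|}$, hence by part (i),
$(2\beta\sigma_{A+\theta},\theta\geq 0)\stackrel{d}{=}(1/\tau_{|\theta_0|+\theta},\theta\geq 0)$. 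Writing $\tau_{u+\theta}=\tau_u+\tilde\tau_\theta$ with $\tilde\tau$ an independent copy of $\tau$, integrating the density $f_{\tau_u}(v)=(u/\sqrt{2\pi v^3})\expp{-u^2/(2v)}$ against the weight $2\,du$ on $u>0$, and applying Fubini, the marginal ``density'' of $V$ at $v$ emerges as
\[
\int_0^\infty 2\,\frac{u}{\sqrt{2\pi v^3}}\expp{-u^2/(2v)}\,du=\sqrt{\frac{2}{\pi v}}\,\ind_{\{v>0\}},
\]
exactly as asserted, and $V$ is independent of the residual subordinator $\tilde\tau$ by construction.

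The main obstacle is the two-dimensional Laplace computation of the first step: the naive double integral does not simplify until one recognises that $\expp{-\mu/u}$ can be absorbed into a stable-$1/2$ density by shifting the parameter to $a=\sqrt{\theta^2+2\mu}$, after which the convolution semigroup of $\tau$ takes over. Once this is in hand, part~(ii) follows mechanically from Theorem~\ref{theo:=loi}, the explicit density of $A$, and the elementary Fubini computation above.
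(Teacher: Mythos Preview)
Your proposal is correct and follows the same overall strategy as the paper: for part~(i), match the two-dimensional Laplace transforms and invoke the Markov property of both processes; for part~(ii), randomise the starting point of $\crr^*$ according to the law of $|A|$ and read off the density of $V=\tau_U$.

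The one genuine difference lies in how the two-dimensional Laplace transform $\E[\expp{-\mu/\tau_\theta-\nu/\tau_{\theta+q}}]$ is computed. The paper proceeds by brute force: it writes out the double integral over the joint density of $(\tau_\theta,\tau'_q)$, applies the change of variables $zu=1/x$, $z=1/y$, integrates in $z$ to obtain a one-variable integral in $u$, and then evaluates the latter by a partial-fraction decomposition of the rational function $\frac{u+1}{u^2+u(1+\gamma^2+\kappa')+\gamma^2}$. Your route is cleaner and more probabilistic: absorbing $\expp{-\mu/u}$ into the stable-$1/2$ density via the parameter shift $\theta\mapsto a=\sqrt{\theta^2+2\mu}$ reduces the inner integral to a convolution $f_{\tau_a}\!*\!f_{\tau_q}=f_{\tau_{a+q}}$, and the outer integral is then just the one-dimensional transform $\E[\expp{-\nu/\tau_{a+q}}]=(a+q)/\sqrt{(a+q)^2+2\nu}$, obtained by the same parameter-shift trick. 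This avoids the partial fractions entirely and makes the semigroup structure of $\tau$ do the work. Both yield the same closed form, but your argument is shorter and exposes why the answer factors the way it does.
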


The proof of this result is postponed to the end of this section.

Notice that \reff{eq:Nrs} implies that for $\theta\geq0$,
\[
\bN^{\psi} \bigl[F(\sigma_q, q\geq0)\mathrm{e}^{-\psi(\theta)
\sigma _0} \bigr]
=\bN^{\psi} [F(\sigma_{q+\theta}, q\geq0)  ].
\]
In particular, we deduce from this, \reff{eq:ss*} and the fact that
$\tau$ is a process with independent and stationary increments the
following result (notice that the size bias effect vanish, as we
condition by $\sigma_0=1$).
\begin{cor}
\label{cor:s1} Let $\beta=1/2$.
Conditionally on $\sigma_0=1$, we have that~$(\sigma_\theta,\allowbreak
\theta\geq0)$ is under the excursion measure $\bN^\psi$ distributed
as \mbox{$(1/(1+\tau_\theta), \theta\geq0)$}.
\end{cor}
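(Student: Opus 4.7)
The plan is to identify the conditional distribution of $(\sigma_q,q\ge 0)$ given $\sigma_0=s$ for every $s>0$ under $\bN^\psi$, and then specialise to $s=1$. The strategy rests on three ingredients already established: the size-biased identity \reff{eq:ss*}, the Girsanov formula of Lemma~\ref{lem:Nrs}, and Proposition~\ref{prop:st}(i) which gives $(2\beta\sigma^*_\theta,\theta\ge 0)\stackrel{d}{=}(1/\tau_\theta,\theta\ge 0)$. The heuristic behind the hint is that the biasing factor $\sigma_0$ appearing in \reff{eq:ss*} equals $1$ on the event $\{\sigma_0=1\}$, so the size-biased and original laws agree on this conditional event.

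Concretely, with $\beta=1/2$ we have $\psi(\lambda)=\lambda^2/2$ and $\psi'(\theta)=\theta$. Combining \reff{eq:ss*}, the shift invariance $\bN^\psi[F(\sigma_{\theta+q},q\ge 0)]=\bN^{\psi_\theta}[F(\sigma_q,q\ge 0)]$, and the Girsanov identity \reff{eq:Nrs} applied to $F(\sigma_\bullet)=\sigma_0\tilde F(\sigma_q,q\ge 0)$, I obtain, for every $\theta>0$ and every non-negative measurable $\tilde F$,
\[
\theta\,\bN^\psi\bigl[\sigma_0\,\tilde F(\sigma_q,q\ge 0)\,\expp{-\theta^2\sigma_0/2}\bigr]
=\bE^\psi\bigl[\tilde F(\sigma^*_{\theta+q},q\ge 0)\bigr]
=\E\bigl[\tilde F(1/\tau_{\theta+q},q\ge 0)\bigr],
\]
the last equality by Proposition~\ref{prop:st}(i). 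I then rewrite both sides as integrals in a single variable $s$. For the left hand side I condition on $\sigma_0=s$ and use that Lemma~\ref{lem:Nexps} gives $\psi^{-1}(\lambda)=\sqrt{2\lambda}$ and hence $\bN^\psi[\sigma_0\in ds]=ds/\sqrt{2\pi s^3}$. For the right hand side I decompose $\tau_{\theta+q}=\tau_\theta+\tau'_q$ with $\tau'$ an independent copy of $\tau$ (stationary independent increments), plug in the explicit density $(\theta/\sqrt{2\pi t^3})\expp{-\theta^2/2t}$ of $\tau_\theta$, and perform the change of variable $s=1/t$. Both sides then take the form
\[
\frac{\theta}{\sqrt{2\pi}}\int_0^{+\infty}\frac{\expp{-\theta^2 s/2}}{\sqrt{s}}\,\Phi(s)\,ds,
\]
with $\Phi(s)=\bN^\psi[\tilde F(\sigma_q,q\ge 0)\mid\sigma_0=s]$ on the left and $\Phi(s)=\E[\tilde F(s/(1+s\tau_q),q\ge 0)]$ on the right.

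Since this equality holds for every $\theta>0$, the injectivity of the Laplace transform in the variable $\theta^2/2$ forces the two integrands to coincide for Lebesgue-almost every $s>0$, so that
\[
\bN^\psi\bigl[\tilde F(\sigma_q,q\ge 0)\bigm|\sigma_0=s\bigr]=\E\bigl[\tilde F(s/(1+s\tau_q),q\ge 0)\bigr]
\]
for a.e.\ $s>0$. The main (minor) obstacle is promoting this a.e.\ identification to the specific value $s=1$. I handle this by choosing a continuous version of the regular conditional distribution: for $\tilde F$ bounded continuous the right hand side is continuous in $s$, and the Brownian scaling inherent to $\psi(\lambda)=\lambda^2/2$ transports the law of $(\sigma_q,q\ge 0)$ conditioned on $\sigma_0=s$ to a rescaling of the law conditioned on $\sigma_0=1$, making the left hand side continuous in $s$ as well. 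Specialising at $s=1$ then yields the corollary.
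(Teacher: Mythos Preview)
Your argument is correct and follows essentially the same route as the paper's own (very terse) justification: the paper simply notes that \reff{eq:Nrs}, \reff{eq:ss*}, and the stationary independent increments of $\tau$ combine to give the result, with the remark that the size-bias factor $\sigma_0$ becomes $1$ once we condition on $\{\sigma_0=1\}$. You have unwound this sketch explicitly via the Laplace transform in $\theta^2/2$ and the disintegration over $\sigma_0=s$, which is exactly what the paper is pointing to. The scaling argument you invoke to pass from ``a.e.\ $s$'' to $s=1$ is legitimate in the quadratic case (the explicit relation is $(\sigma_q)\mid\{\sigma_0=s\}\stackrel{d}{=}(s\,\sigma'_{\sqrt{s}\,q})\mid\{\sigma'_0=1\}$, matching $\tau_{\sqrt{s}q}\stackrel{d}{=}s\tau_q$), though the paper does not spell this out either.
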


We thus recover a well-known result from Aldous and Pitman \cite{apsac}
on the size process of a tagged fragment for a self-similar
fragmentation (see \cite{brfcp}) with index $1/2$, no erosion and
binary dislocation measure $\nu$ defined on pairs
$(s_1,s_2)$ such that $s_1\ge s_2\ge0$ and $s_1+s_2=1$ by
\[
\nu(s_1\in dx)=\bigl(2\pi
x^3(1-x)^3\bigr)^{-1/2}\ind_{\{x>1/2\}}\,dx,
\]
which correspond to the
fragmentation of the CRT (see also the end of \cite{bssf,aspsf} or \cite{bfatfadbhf}).

\begin{pf*}{Proof of Proposition \ref{prop:st}}
Let $\lambda, \kappa, \theta, q$
be positive. As we did not find any reference for the computation of
\[
I=\E[\mathrm{e}^{-\lambda/\tau_\theta- \kappa/\tau_{\theta+q}}],
\]
we shall give it here. Using that $\tau$ is a subordinator, we have
\[
I=\E\bigl[\mathrm{e}^{-\lambda/\tau_\theta- \kappa/(\tau_\theta+\tau'_q)}\bigr],
\]
where $\tau'$ is an independent copy of $\tau$.
We set $p=\sqrt{2\lambda+\theta^2}$ and $
J=2\pi\frac{p}{\theta} I$. We get
\begin{eqnarray*}
J
&=& 2\pi\frac{p}{\theta} \frac{\theta q} {2\pi}
\int_{\R_+^2} \mathrm{e}^{-\lambda/x-\kappa/(x+y) - \theta^2/2x
-q^2/2y}\,
\frac{dx\,dy}{(xy)^{3/2}} \\
&=& p q \int_{\R_+^2} \mathrm{e}^{-\kappa/(x+y) - p^2/2x -q^2/2y}\,
\frac{dx\,dy}{(xy)^{3/2}}\\
&=& pq \int_{\R_+^2} \mathrm{e}^{-\kappa zu/(1+u) - zup^2/2 -zq^2/2}\,
\frac{dz\,du}{\sqrt{u}} \\
&=& p q \int_{\R_+} \frac{u+1}{u^2 p^2/2+ u(
p^2/2+ q^2/2 +\kappa) +q^2/2}\,
\frac{du}{\sqrt{u}} \\
&=& 2\gamma\int_{\R_+} \frac{u+1}{u^2 + u(
1+ \gamma^2 +\kappa' ) +\gamma^2}\,
\frac{du}{\sqrt{u}},
\end{eqnarray*}
where we used the change of variable $zu=1/x$ and $z=1/y$ for the third
equality, $\kappa'=2\kappa/p^2$ and $\gamma=q/p$ for the last.
Let $a,b$ such that $a+b=1+\gamma^2+\kappa'$ and $ab=\gamma^2$. Notice
that
\[
\frac{u+1}{u^2 + u(
1+ \gamma^2 +\kappa' ) +\gamma^2}=\frac{a-1}{a-b}\frac{1}{u+a}+
\frac{1-b}{a-b}\frac{1}{u+b}.
\]
Then we get
\begin{eqnarray*}
J
&=& 2\gamma\frac{a-1}{a-b}\int_{\R_+} \frac{du}{\sqrt{u}(u+a) }+
2\gamma\frac{1-b}{a-b}\int_{\R_+}
\frac{du}{\sqrt{u}(u+b) }\\
&=& 2\gamma\frac{1}{a-b} \biggl (\frac{a-1}{\sqrt{a}}+
\frac{1-b}{\sqrt{b}} \biggr) \int_{\R_+}
\frac{du}{\sqrt{u}(u+1) }\\
&=& 2 \gamma\frac{\sqrt{ab}+1}{\sqrt{ab}}\frac{1}{\sqrt{a}+\sqrt
{b}}\pi\\
&=& 2 \pi\frac{\gamma+1}{\sqrt{(1+\gamma)^2+\kappa'}}.
\end{eqnarray*}
Therefore, we obtain
\[
I=\frac{\theta}{p}
\frac{\gamma+1}{\sqrt{(1+\gamma)^2+\kappa'}}
= \frac{\theta}{\sqrt{\theta^2+2\lambda}}
\frac{q+\sqrt{\theta^2+2\lambda}}
{\sqrt{2\kappa+(q+\sqrt{\theta^2+2\lambda})^2}}.
\]
We deduce that the two processes, $(2\beta\sigma_\theta^*, \theta
\geq
0)$ and
$(1/\tau_\theta, \theta\geq0)$, have
the same two-dimensional marginals. Since they are Markov processes,
they have the same distribution. This proves the first part of the
theorem.

Let $U$ be a positive ``random'' variable whose ``distribution'' given
by 2
times the Lebesgue measure on $(0,+\infty)$ which is independent of
$\tau$.\vadjust{\goodbreak} The ``distribution'' of $V=\tau_U$ has density
w.r.t. the Lebesgue measure given by $\sqrt{2/(\pi
v)}\ind_{\{v>0\}}$. The second part is then a direct consequence of
Corollary~\ref{cor:gA}.
\end{pf*}


%

\printaddresses

\end{document}